\documentclass[12pt]{amsart}
\usepackage[margin=1in]{geometry}
\usepackage{amssymb,amsfonts,amsmath}
\usepackage{color}
\usepackage{soul}
\usepackage{enumerate}
\usepackage{mathrsfs}
\usepackage{hyperref}
\usepackage[capitalise]{cleveref}
\usepackage{constants}
\usepackage[T1]{fontenc}
\usepackage{bbm} 


\newtheorem{theorem}{Theorem}[section]
\newtheorem{corollary}[theorem]{Corollary}
\newtheorem{proposition}[theorem]{Proposition}
\newtheorem{lemma}[theorem]{Lemma}

\theoremstyle{definition}

\theoremstyle{remark}
\newtheorem*{remark}{Remark}

\numberwithin{equation}{section}

\crefname{figure}{Figure}{Figures}
\theoremstyle{plain}
\newtheorem*{theorem*}{Theorem}
\crefname{theorems}{Theorem}{Theorems}
\crefname{corollaries}{Corollary}{Corollaries}
\newtheorem*{corollary*}{Corollary}
\crefname{corollaries*}{Corollary}{Corollaries}
\crefname{lemma}{Lemma}{Lemmas}
\crefname{proposition}{Proposition}{Propositions}
\crefname{conjectures}{Conjecture}{Conjectures}
\newtheorem*{conjonjecture*}{Conjecture}
\crefname{conjonjectures*}{Conjecture}{Conjectures}
\crefname{definitions}{Definition}{Definitions}
\crefname{hypotheses}{Hypothesis}{Hypotheses}

\newcommand{\R}{\mathbb{R}}
\newcommand{\Q}{\mathbb{Q}}

\newcommand{\re}{\textup{Re}}
\newcommand{\im}{\textup{Im}}

\newcommand{\GL}{\mathrm{GL}}

\newcommand{\A}{\mathbb{A}}

\makeatletter
\DeclareFontFamily{U}  {MnSymbolF}{}
\DeclareSymbolFont{symbolsMN}{U}{MnSymbolF}{m}{n}
\SetSymbolFont{symbolsMN}{bold}{U}{MnSymbolF}{b}{n}
\DeclareFontShape{U}{MnSymbolF}{m}{n}{
    <-6>  MnSymbolF5
   <6-7>  MnSymbolF6
   <7-8>  MnSymbolF7
   <8-9>  MnSymbolF8
   <9-10> MnSymbolF9
  <10-12> MnSymbolF10
  <12->   MnSymbolF12}{}
\DeclareFontShape{U}{MnSymbolF}{b}{n}{
    <-6>  MnSymbolF-Bold5
   <6-7>  MnSymbolF-Bold6
   <7-8>  MnSymbolF-Bold7
   <8-9>  MnSymbolF-Bold8
   <9-10> MnSymbolF-Bold9
  <10-12> MnSymbolF-Bold10
  <12->   MnSymbolF-Bold12}{}
\DeclareMathSymbol{\tbigtimes}{\mathop}{symbolsMN}{2}
\newcommand*{\bigtimes}{%
  \DOTSB
  \tbigtimes
  \slimits@ 
}
\makeatother

\renewcommand{\tilde}{\widetilde}

\renewcommand{\bar}{\overline}

\renewcommand{\epsilon}{\varepsilon}

\renewcommand{\pmod}[1]{\, (\mathrm{mod} {\, #1})}

\newcommand{\Sym}{\mathrm{Sym}^m f}

\allowdisplaybreaks

\renewcommand{\pmod}[1]{\left(\mathrm{mod}\,\,#1\right)}

\newconstantfamily{abcon}{symbol=c}

\makeatletter
\let\@wraptoccontribs\wraptoccontribs
\makeatother

\title[]{Effective forms of the Sato--Tate conjecture}
\author{Jesse Thorner}
\address{Department of Mathematics, University of Illinois, Urbana, IL 61801}
\email{jesse.thorner@gmail.com}


\begin{document}

\begin{abstract}
We prove effective forms of the Sato--Tate conjecture for holomorphic cuspidal newforms which improve on the author's previous work (solo and joint with Lemke Oliver).  We also prove an effective form of the joint Sato--Tate distribution for two twist-inequivalent newforms.  Our results are unconditional because of recent work of Newton and Thorne.
\end{abstract}

\maketitle

\section{Introduction and statement of results}

Let $f(z)=\sum_{n=1}^{\infty}a_f(n)n^{\frac{k-1}{2}}e^{2\pi inz}\in S_k^{\mathrm{new}}(\Gamma_0(q))$ be a  cusp form (normalized so that $a_f(1)=1$) with trivial nebentypus.  If $f$ is also an eigenform for all of the Hecke operators and all of the Atkin--Lehner involutions $|_k W(q)$ and $|_k W(Q_p)$ for each prime $p|q$, then $f$ is a {\it newform} (see \cite[Section 2.5]{Ono}).  Throughout, we assume that $f$ does not have complex multiplication (CM), so there is no imaginary quadratic field $K$ such that for $p\nmid q$, $p$ is inert in $K$ if and only if $a_f(p)=0$.  Deligne's proof of the Weil conjectures implies that for each prime $p$, there exists an angle $\theta_p\in[0,\pi]$ such that $a_f(p) = 2\cos\theta_p$.  Serre's extension of the Sato--Tate conjecture \cite{MR1484415}, originally proposed for $f$ attached to a non-CM elliptic curve by modularity, asserts that if $f$ is non-CM, then the sequence $\{\theta_p\}$ is equidistributed in the interval $[0,\pi]$ with respect to the measure $d\mu_{\mathrm{ST}}:=(2/\pi)\sin^2\theta d\theta$.  Equivalently, one has
\begin{equation}
\label{eqn:Sato-Tate}
\pi_{f,I}(x):=\#\{p\leq x\colon \theta_p\in I,~p\nmid q\}\sim\mu_{\mathrm{ST}}(I)\pi(x)\qquad\textup{as $x\to\infty$,}
\end{equation}
where $\pi(x)=\#\{p\leq x\}$ and $I=[\alpha,\beta]\subseteq[0,\pi]$.  Barnet-Lamb, Geraghty, Harris, and Taylor \cite{BGHT} proved Serre's extension of the Sato--Tate conjecture.  See \cite{MR2383303} for an excellent overiew.

In \cite{JT1}, the author bounded the error term in \eqref{eqn:Sato-Tate} assuming that the $m$-th symmetric power lift $\Sym$ corresponds with a cuspidal automorphic representation of $\GL_{m+1}(\mathbb{A})$ for each $m\geq 1$, where $\mathbb{A}$ denotes the ring of adeles over $\Q$.  This implies that the symmetric power $L$-functions $L(s,\Sym)$ have an analytic continuation and functional equation of the expected type for all $m\geq 1$.  With this hypothesis, the author proved for fixed $f$ and $I$ that for all $\epsilon>0$, there exist constants $c_{\epsilon},C_{\epsilon}>0$ such that
\begin{equation}
\label{eqn:JT1}
|\pi_{f,I}(x)-\mu_{\mathrm{ST}}(I)\pi(x)|\leq c_{\epsilon}\pi(x)(\log x)^{-\frac{1}{8}+\epsilon},\qquad x>C_{\epsilon}.
\end{equation}

Until recently, it was known that $\Sym$ corresponds with a cuspidal automorphic representation of $\GL_{m+1}(\A)$ only for $m\leq 8$ \cite{CT,GJ,Kim,KS2}.  Recently, Newton and Thorne \cite{NT,NT2} proved that $\Sym$ corresponds with a cuspidal automorphic representation of $\GL_{m+1}(\A)$ for {\bf all} $m\geq 1$.  This inspired the author to improve the quality of \eqref{eqn:JT1} in the $x$-aspect and specify the uniformity with respect to $f$ and $I$.  In what follows, $c_1$, $c_2$, $c_3,\ldots$ denotes a sequence of positive, absolute, and effectively computable constants.

\begin{theorem}
	\label{thm:main_theorem}
	Let $f\in S_k^{\mathrm{new}}(\Gamma_0(q))$ be a non-CM newform as above, and let $I=[\alpha,\beta]\subseteq[0,\pi]$.  There exists a constant $\Cl[abcon]{thm1.1}$ such that
	\begin{equation}
	\label{eqn:st1}
	|\pi_{f,I}(x)-\mu_{\mathrm{ST}}(I)\pi(x)|\leq \Cr{thm1.1}\pi(x)\frac{\log(kq\log x)}{\sqrt{\log x}},\qquad x\geq 3.
	\end{equation}
	Furthermore, there exist constants $\Cl[abcon]{thm1.1_1}$, $\Cl[abcon]{thm1.1_2}$, and $\Cl[abcon]{Linnik}$ such that (with $\mu=\mu_{\mathrm{ST}}(I)$)
	\begin{equation}
	\label{eqn:st2}
	\Cr{thm1.1_1}\mu_{\mathrm{ST}}(I)\pi(x)\leq \pi_{f,I}(x)\leq \Cr{thm1.1_2}\mu_{\mathrm{ST}}(I)\pi(x),\qquad x\geq (kq/\mu)^{\Cr{Linnik}\log(e/\mu)/\mu^2}.
	\end{equation}
\end{theorem}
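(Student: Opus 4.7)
The plan is to combine the Vinogradov--Beurling--Selberg trigonometric polynomial method with effective prime number theorems for the symmetric power $L$-functions $L(s,\Sym)$. By the theorem of Newton--Thorne, each $L(s,\Sym)$ is now unconditionally a cuspidal automorphic $L$-function on $\GL_{m+1}(\A)$, so the Rankin--Selberg $L$-function $L(s,\Sym\times\Sym)$ has a simple pole at $s=1$ for every $m\ge 1$; this furnishes a Hoffstein--Ramakrishnan-type standard zero-free region and cleanly normalizes the analysis.

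First I would construct, for each integer $M\ge 1$, trigonometric polynomials
\[
F_M^\pm(\theta)=\mu_{\mathrm{ST}}(I)+\sum_{m=1}^M \hat{c}_m^\pm\, U_m(\cos\theta),
\]
where $U_m(\cos\theta)=\sin((m+1)\theta)/\sin\theta$ is the Chebyshev polynomial of the second kind, such that $F_M^-\le \mathbf{1}_I\le F_M^+$ on $[0,\pi]$, $\int(F_M^+-F_M^-)\,d\mu_{\mathrm{ST}}\ll 1/M$, and $|\hat{c}_m^\pm|\le c_0$ for an absolute constant. Since Hecke theory gives $\lambda_{\Sym}(p)=U_m(\cos\theta_p)$ for $p\nmid q$, summing $F_M^\pm(\theta_p)$ over $p\le x$ with $p\nmid q$ sandwiches $\pi_{f,I}(x)$ between $\mu_{\mathrm{ST}}(I)\pi(x)+O(\log q)$ and the Hecke sums $\sum_{m=1}^M \hat{c}_m^\pm \sum_{p\le x,\, p\nmid q}\lambda_{\Sym}(p)$.

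Next I would establish an effective PNT for each $\Sym$ of the form $\sum_{p\le x}\lambda_{\Sym}(p)\ll \pi(x)\exp(-c\sqrt{(\log x)/\log C(\Sym)})$, via the explicit formula, the standard zero-free region coming from Rankin--Selberg, and the analytic conductor bound $\log C(\Sym)\ll m\log(kqm)$. Summing this over $m\le M$ against the bounded coefficients and balancing it with the approximation error $\pi(x)/M$, the choice $M\asymp \sqrt{\log x}/\log(kq\log x)$ delivers \eqref{eqn:st1}.

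For the Linnik-type bound \eqref{eqn:st2}, the approximation method above becomes inadequate when $\mu$ is tiny, since one cannot take $M$ much larger than $1/\mu$ without losing the main term $\mu\pi(x)$. Instead, I would invoke a log-free zero density estimate for the family $\{L(s,\Sym)\}_{m\le M}$ in the spirit of Kowalski--Michel, together with a Deuring--Heilbronn-type repulsion to neutralize any exceptional real zero of a single $L(s,\Sym)$. For $x\ge (kq/\mu)^{\Cr{Linnik}\log(e/\mu)/\mu^2}$ the resulting zero contribution to the symmetric-power prime sums is majorized by $\tfrac12\mu\,\pi(x)$, which yields the two-sided bound. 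The principal obstacle is uniform effectivity: carrying polynomial-strength dependence on $m$, $k$, and $q$ through the zero-free region, the conductor bound, and the Deuring--Heilbronn repulsion simultaneously. This uniformity is precisely what drives both the $\log(kq\log x)$ factor in \eqref{eqn:st1} and the Linnik exponent $\log(e/\mu)/\mu^2$ in \eqref{eqn:st2}.
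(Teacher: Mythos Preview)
Your overall architecture matches the paper's: Beurling--Selberg majorants/minorants expressed in the Chebyshev basis, the identity $a_{\Sym}(p)=U_m(\cos\theta_p)$, and an effective prime number theorem for $L(s,\Sym)$ with careful degree dependence. The choice $M\asymp\sqrt{\log x}/\log(kq\log x)$ for \eqref{eqn:st1} is exactly what the paper does.

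There are two substantive differences worth noting. First, the paper does \emph{not} need a Deuring--Heilbronn repulsion: it proves outright that $L(s,\Sym)$ has no exceptional real zero when the nebentypus is trivial. The device is to apply a Goldfeld--Hoffstein--Lieman positivity argument to the isobaric sum $\Pi_m=\mathbbm{1}\boxplus\mathrm{Sym}^2\pi_f\boxplus\mathrm{Sym}^m\pi_f$; the pole of $L(s,\Pi_m\times\tilde{\Pi}_m)$ at $s=1$ has order exactly $3$, but a putative real zero of $L(s,\Sym)$ would force a real zero of $L(s,\Pi_m\times\tilde{\Pi}_m)$ of order at least $4$, contradicting the positivity bound. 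This eliminates the Siegel-zero term from the symmetric-power prime number theorem entirely, so your Deuring--Heilbronn step is unnecessary (and would be the most delicate part of your argument to make fully effective). Your mention of Hoffstein--Ramakrishnan is in the right spirit, but you then backslide by re-introducing a possible exceptional zero for \eqref{eqn:st2}.

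Second, the paper does not switch methods for \eqref{eqn:st2}. It uses the \emph{same} discrepancy inequality (the analogue of your sandwich) for both parts; the log-free zero density estimate is already built into the symmetric-power PNT (contributing a term $m^2x^{1-1/(cm)}$), so it is not a separate tool invoked only for the Linnik range. For \eqref{eqn:st2} one simply takes $M\asymp 1/\mu$ and asks that $x\ge (kqM)^{cM^2\log M}$, which makes all three error terms in the PNT (the log-free density term, the zero-free-region term $\exp[-c\log x/(m^2\log(kqm))]$, and the term $\exp[-c\sqrt{\log x}/\sqrt{m}]$) small compared to $\mu\pi(x)$. Your instinct that a standard zero-free-region PNT alone is insufficient here is correct---that is precisely why the log-free density input is needed---but it enters the proof uniformly rather than as a separate Linnik-style argument.
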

\begin{remark}
The error term in \eqref{eqn:st1} is uniform enough to accommodate some small-scale equidistribution for the sequence $\{\theta_p\}$.   For instance, if $x\geq e^{kq}$ and $F(x)$ is a monotonically increasing function with $\lim_{x\to\infty}F(x)=\infty$, then the sequence $\{\theta_p\}$ is equidistributed with respect to $\mu_{\mathrm{ST}}$ in intervals with $\mu_{\mathrm{ST}}(I)$ as small as $(\log\log x)F(x)/\sqrt{\log x}$.
\end{remark}

\begin{remark}
The proof can be modified for when the nebentypus character $\psi\pmod{q}$ of $f$ is nontrivial.  However, if $\omega$ is a root of unity with $\omega^2$ in the image of $\psi$, then one must restrict consideration to the primes $p$ such that $\psi(p) = \omega^2$.  Then $a_f(p)/\omega\in\R$, and we can define $\theta_p\in[0,\pi]$ by $a_f(p)=2\omega\cos\theta_p$.  The ensuing changes to our proofs are not purely cosmetic.  In particular, the $m$-dependence in \cref{cor:ZFR} would become worse since the cuspidal automorphic representation of $\mathrm{GL}_2(\mathbb{A}_F)$ corresponding with $f$ is no longer guaranteed to be self-dual.  This directly affects the power of $\log x$ that is saved in \cref{thm:main_theorem}.
\end{remark}
\begin{remark}
The bounds in \eqref{eqn:st2} improve work of Lemke Oliver and the author \cite[Thm 1.6]{RJLOT}.	If $I$ is fixed, then as $f$ varies, \eqref{eqn:st2} gives an upper bound on $\pi_{f,I}(x)$ commensurate with the Brun--Titchmarsh theorem as well as an upper bound on the least $p\nmid q$ such that $\theta_p\in I$ commensurate with Linnik's bound on the least prime in an arithmetic progression \cite{Linnik}.
\end{remark}

Even though the error term in asymptotic in \cref{thm:main_theorem} saves less than a full power of $\log x$ over $\pi(x)$, some arithmetically significant consequences still follow from \cref{thm:main_theorem}.  For example, Luca, Radziwi{\l}{\l}, and Shparlinski \cite[Thm 1.1]{MR3893309} proved that the inequality $|a_f(n)|\leq (\log n)^{-\frac{1}{2}+o(1)}$ (where $o(1)$ denotes a quantity, possibly depending on $f$, which tends to zero as $n\to\infty$) holds for a density one subset of integers $n$.  This improves on a standard argument which achieves the same bound with $\log 2$ replacing $-\frac{1}{2}$.  One might ask whether the exponent $-\frac{1}{2}$ might be lowered any further for a density one subset of $n$.  In the same paper, Luca, Radziwi{\l}{\l}, and Shparlinski \cite[Cor 1.5]{MR3893309}  proved that  \cref{thm:main_theorem} suffices to show that if $v\in\R$ is fixed, then
\begin{equation}
\label{eqn:maks}
\lim_{x\to\infty}\frac{\displaystyle\#\left\{n\leq x\colon a_f(n)\neq 0,~\frac{\log|a_f(n)|+\frac{1}{2}\log\log n}{\sqrt{(\frac{1}{2}+\frac{\pi^2}{12})\log\log n}}\geq v\right\}}{\#\{n\leq x\colon a_f(n)\neq 0\}}=\frac{1}{\sqrt{2\pi}}\int_{v}^{\infty}e^{-u^2/2}du.
\end{equation}
Thus the exponent $-\frac{1}{2}$ cannot be lowered any further for a density one subset of $n$.  More recently, Klurman and Mangerel \cite{2020arXiv200903225K} used \cref{thm:main_theorem} to prove a multidimensional version of \eqref{eqn:maks}, which enabled them to prove that if $N\geq 1$, $A_f(n)=a_f(n)n^{\frac{k-1}{2}}$, and $b_1,\ldots,b_N$ are distinct nonnegative integers such that $A_f(b_j)\neq 0$ for all $1\leq j\leq N$, then
\begin{equation}
\label{eqn:oleksiy}
\lim_{x\to\infty}\frac{\{n\geq x\colon 0<|A_f(n+b_1)|<|A_f(n+b_2)|<\cdots<|A_f(n+b_N)|\}}{\#\{n\leq x\colon A_f(n+b_1)\cdots A_f(n+b_N)\neq0\}}=\frac{1}{N!}.
\end{equation}
In both \eqref{eqn:maks} and \eqref{eqn:oleksiy}, if $x$ is sufficiently large, then there exists a constant $c_f>0$ such that the counting function in the denominator is bounded below by $c_f x$.  This lower bound is a direct consequence of the fact that there exists a constant $\delta>0$ such that the density of primes $p\leq x$ such that $a_f(p)=0$ is at most $(\log x)^{-\delta}$ \cite{Serre,TZ2}.



For $i=1,2$, let $f_i\in S_{k_i}^{\mathrm{new}}(q_i)$ be a non-CM newform, let $\{\theta_{p}^{(i)}\}$ be the sequence of angles in the Sato--Tate conjecture for $f_i$, and let $\mathbf{1}_{I_i}$ be the indicator function of the interval $I_i=[\alpha_i,\beta_i]\subseteq[0,\pi]$.  Suppose that there exists no primitive character $\chi$ satisfying the property that $f_1=f_2\otimes\chi$, in which case $f_1$ and $f_2$ are not twist-equivalent.  We denote this by $f_1\not\sim f_2$.  If $f_1\not\sim f_2$, then it is natural to ask whether the Sato--Tate distributions for $f_1$ and $f_2$ are independent, as quantified by the proposed existence of the asymptotic
\begin{equation}
\label{eqn:pi2}
\pi_{f_1,f_2,I_1,I_2}(x):=\sum_{\substack{p\leq x \\  p\nmid q_1 q_2}}\mathbf{1}_{I_1}(\theta_{p}^{(1)})\mathbf{1}_{I_2}(\theta_{p}^{(2)})\sim \mu_{\mathrm{ST}}(I_1)\mu_{\mathrm{ST}}(I_2)\pi(x)\qquad\textup{as $x\to\infty$,}
\end{equation}
where $I_1,I_2\subseteq[0,\pi]$ are two intervals.  This question was posed independently by Katz and Mazur for $f_i$ corresponding to elliptic curves by modularity.  Harris \cite{Harris} proved the asymptotic \eqref{eqn:pi2} for non-CM newforms associated to pairs of twist-inequivalent elliptic curves.  See Wong \cite{Wong} for a generalization to all pairs of non-CM newforms and even pairs of Hilbert modular forms over totally real fields.  The work of Newton and Thorne \cite{NT,NT2} and the ideas leading to \cref{thm:main_theorem} permit us to effectively quantify this independence.

\begin{theorem}
\label{thm:joint}
	For $i=1,2$, let $f_i\in S_{k_i}(\Gamma_0(q_i))$ be a newform as in \cref{thm:main_theorem}, and let $I_i=[\alpha_i,\beta_i]\subseteq[0,\pi]$.  If $f_1\not\sim f_2$, then there exists a constant $\Cl[abcon]{thm1.4}$ such that
	\[
	|\pi_{f_1,f_2,I_1,I_2}(x) - \mu_{\mathrm{ST}}(I_1)\mu_{\mathrm{ST}}(I_2)\pi(x)|\leq \Cr{thm1.4}\pi(x)\frac{\log(k_1 q_1 k_2 q_2\log\log x)}{\sqrt{\log\log x}},\qquad x\geq 16.
	\]
\end{theorem}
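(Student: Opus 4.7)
The plan is to run the harmonic-analytic strategy underlying \cref{thm:main_theorem} on the product group $\mathrm{SU}(2)\times\mathrm{SU}(2)$, whose irreducible characters are the tensor products $U_{m_1}(\cos\theta^{(1)})U_{m_2}(\cos\theta^{(2)})$, where $U_m$ denotes the Chebyshev polynomial of the second kind. I would first build Vaaler/Beurling--Selberg polynomial approximants $P_M^{\pm,(i)}(\theta)=\mu_{\mathrm{ST}}(I_i)+\sum_{m=1}^{M}c_m^{\pm,(i)}U_m(\cos\theta)$ to $\mathbf{1}_{I_i}$, $i=1,2$, with $L^1(d\mu_{\mathrm{ST}})$ error $O(1/M)$ and coefficients $|c_m^{\pm,(i)}|\ll\min(1,1/m)$. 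Multiplying one upper and one lower approximant in each variable yields a polynomial majorant/minorant of $\mathbf{1}_{I_1}\otimes\mathbf{1}_{I_2}$ with $L^1$ error $O(1/M)$ against the product measure, reducing the problem to estimating, for each $(m_1,m_2)$ with $0\leq m_1,m_2\leq M$ not both zero, the joint prime sum
$$S_{m_1,m_2}(x):=\sum_{\substack{p\leq x\\ p\nmid q_1q_2}}U_{m_1}(\cos\theta_p^{(1)})U_{m_2}(\cos\theta_p^{(2)}).$$

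By Deligne, $U_m(\cos\theta_p^{(i)})=a_{\mathrm{Sym}^m f_i}(p)$ for $p\nmid q_i$, so $S_{m_1,m_2}(x)$ is (up to $O(\log(k_iq_i))$ ramified primes) a truncated Dirichlet-coefficient sum of the Rankin--Selberg $L$-function $L(s,\mathrm{Sym}^{m_1}f_1\times\mathrm{Sym}^{m_2}f_2)$. The crucial automorphic input is Newton--Thorne: each $\mathrm{Sym}^{m_i}f_i$ is cuspidal on $\GL_{m_i+1}(\A)$, and the twist-inequivalence $f_1\not\sim f_2$ rules out any isomorphism $\mathrm{Sym}^{m_1}f_1\cong\mathrm{Sym}^{m_2}f_2$ when $(m_1,m_2)\neq(0,0)$ (for $m_1\neq m_2$ by dimension count, and for $m_1=m_2\neq 0$ via strong multiplicity one, together with the fact that an isomorphism of symmetric power lifts descends to a twist equivalence of the underlying forms). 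Consequently the Rankin--Selberg $L$-function is entire, with analytic conductor bounded polynomially in $(k_1q_1)^{m_2+1}(k_2q_2)^{m_1+1}$.

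To each such $L$-function I would then apply the effective prime number theorem for cuspidal Rankin--Selberg $L$-functions---the same engine (a standard Vinogradov-type zero-free region combined with log-free zero density estimates) that powers the analogue of \cref{cor:ZFR} in the single-form setting, now at degree $(m_1+1)(m_2+1)\leq(M+1)^2$. This yields a power saving in $\log x$ that degrades polynomially in $M$ and in $\log(k_1q_1k_2q_2)$. Balancing the $O(\pi(x)/M)$ Beurling--Selberg truncation loss against the accumulated $O(M^2)$ Rankin--Selberg error contributions, weighted by the coefficient bounds $|c_{m_1}^{(1)}c_{m_2}^{(2)}|\ll 1/(m_1m_2)$ for $m_1,m_2\geq 1$, forces the optimal choice $M\asymp\sqrt{\log\log x}/\log(k_1q_1k_2q_2)$, which yields the claimed error bound.

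The main obstacle is tracking the effective prime number theorem uniformly in the combined degree $(m_1+1)(m_2+1)$ and analytic conductor of the Rankin--Selberg convolution. In the single-form setting of \cref{thm:main_theorem}, the zero-free region for $L(s,\mathrm{Sym}^m f)$ permits $M$ as large as $\sqrt{\log x}$ up to log factors before the arithmetic error overwhelms the signal, giving the $1/\sqrt{\log x}$ saving; in the joint setting the squared degree and compounded conductor only admit the much smaller $M\asymp\sqrt{\log\log x}$, which is precisely the mechanism behind the weakening from $\sqrt{\log x}$ to $\sqrt{\log\log x}$. Secondary technical issues---excising the primes dividing $q_1q_2$, verifying the rigidity statement $\mathrm{Sym}^{m_1}f_1\not\cong\mathrm{Sym}^{m_2}f_2$, and absorbing Ramanujan-type coefficient bounds into the majorant---are all standard extensions of arguments already used in the proof of \cref{thm:main_theorem}.
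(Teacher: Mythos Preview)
Your overall architecture---two-variable Beurling--Selberg majorants, reduction to Rankin--Selberg prime sums for $\mathrm{Sym}^{m_1}f_1\times\mathrm{Sym}^{m_2}f_2$, Newton--Thorne automorphy plus twist-inequivalence to kill the pole at $s=1$, and an effective prime number theorem---matches the paper's approach precisely. The discrepancy bound you describe is exactly \eqref{eqn:ET2}, and the joint prime sums are handled in \cref{prop:PNT_Sym_2}.

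However, you have misidentified the bottleneck, and this matters because your stated reasoning does not actually produce $\sqrt{\log\log x}$. You say the effective PNT degrades ``polynomially in $M$'' and attribute the drop from $\sqrt{\log x}$ to $\sqrt{\log\log x}$ to the squared degree and compounded conductor. But polynomial degradation in $M$---which is indeed what the zero-free region and log-free zero density estimate contribute (the last three terms in \cref{prop:PNT_Sym_2})---would still permit $M$ to be a small power of $\log x$ and would produce an error of shape $(\log x)^{-\delta}$, not $(\log\log x)^{-1/2}$. The real constraint is the possible \emph{Landau--Siegel zero} of $L(s,\mathrm{Sym}^{m_1}f_1\times\mathrm{Sym}^{m_2}f_2)$: the only effective bound available (\cref{lem:Siegel_2}, via \cref{lem:Siegel}) places the exceptional zero at distance at least $c(k_1q_1k_2q_2M)^{-c'M^2}$ from $s=1$, contributing the first term $x^{1-c(k_1q_1k_2q_2M)^{-c'M^2}}$ in \cref{prop:PNT_Sym_2}. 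Making this $o(x)$ forces $\log\log x\gg M^2\log(k_1q_1k_2q_2M)$, which is exactly the range \eqref{eqn:M-rng} and is what dictates the optimal $M\asymp\sqrt{\log\log x}/\log(k_1q_1k_2q_2\log\log x)$.

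The contrast with \cref{thm:main_theorem} is that for a single form, \cref{lem:Siegel_1} \emph{eliminates} the exceptional zero of $L(s,\mathrm{Sym}^m f)$ entirely via the auxiliary isobaric sum $\mathbbm{1}\boxplus\mathrm{Sym}^2\pi_f\boxplus\mathrm{Sym}^m\pi_f$ and \cref{prop:GHL}; no analogous device is available for the mixed Rankin--Selberg $L$-functions, so the Siegel term survives and the admissible $M$ shrinks doubly logarithmically. Your proposal omits this Siegel-zero analysis entirely; without it, the claimed optimal $M$ and the final error term are unjustified.
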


\begin{remark}
One could establish a much stronger ineffective error term, where the ineffectivity arises from the possible existence of a Landau--Siegel zero (see \cref{sec:zeros,sec:Sym}).  See Molteni \cite{Molteni} (and also his PhD thesis) for a discussion on how to bound such zeros in our setting.  As one sees in \cref{sec:Sym}, Landau--Siegel zeros do not plague the error term in \cref{thm:main_theorem}.
\end{remark}

If one assumes the generalized Riemann hypothesis (GRH) for each symmetric power $L$-function $L(s,\Sym)$, then \cref{thm:main_theorem} improves as follows.

\begin{theorem}
\label{thm:GRH}
Let $f\in S_k(\Gamma_0(q))$ and $I$ be as in \cref{thm:main_theorem}, and let $\mu=\mu_{\mathrm{ST}}(I)$.  If $L(s,\Sym)$ satisfies GRH for all $m\geq 0$, then there exists a constant $\Cl[abcon]{thm1.3}$ such that
	\[
	|\pi_{f,I}(x)-\mu_{\mathrm{ST}}(I)\pi(x)|\leq \Cr{thm1.3}x^{\frac{3}{4}}\frac{\log(kqx)}{\log x},\qquad x\geq 3.
	\]
	Also, there exist a constant $\Cl[abcon]{ZLinnike3}$ and a prime $p\nmid q$ which satisfies $\theta_p\in I$ and $p\leq \Cr{ZLinnike3}\frac{1}{\mu^4}(\log\frac{kq}{\mu})^2$.
\end{theorem}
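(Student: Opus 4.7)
The plan is to approximate $\mathbf{1}_I$ by a polynomial of degree $M$ in Chebyshev polynomials of the second kind and to reduce the problem to bounding prime sums of symmetric-power Hecke eigenvalues via GRH. I will use Vaaler-type Beurling--Selberg majorants $F_M^-\le\mathbf{1}_I\le F_M^+$, each a degree-$M$ polynomial in $\cos\theta$ with Chebyshev expansion $F_M^\pm(\cos\theta)=\sum_{m=0}^M\hat F_M^\pm(m)U_m(\cos\theta)$ satisfying $\hat F_M^\pm(0)=\mu_{\mathrm{ST}}(I)+O(1/M)$ and $|\hat F_M^\pm(m)|\le c/m$ for $m\ge 1$, as in \cite{JT1,RJLOT}. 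The key identity is $U_m(\cos\theta_p)=a_{\Sym}(p)$ for $p\nmid q$.

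Sandwiching $\mathbf{1}_I$ between $F_M^-$ and $F_M^+$ and summing over primes up to $x$ reduces matters to bounding the sums $\sum_{p\le x}a_{\Sym}(p)$ for $1\le m\le M$:
\[
|\pi_{f,I}(x)-\mu_{\mathrm{ST}}(I)\pi(x)|\ll\frac{\pi(x)}{M}+\sum_{m=1}^M\frac{1}{m}\Big|\sum_{p\le x}a_{\Sym}(p)\Big|.
\]
By the work of Newton and Thorne, each $L(s,\Sym)$ is a cuspidal automorphic $L$-function on $\GL_{m+1}(\A)$ with analytic conductor satisfying $\log q(\Sym)\ll m\log(mkq)$. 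Assuming GRH for each such $L$-function, the usual explicit formula truncated at an optimal height, combined with partial summation, yields
\[
\Big|\sum_{p\le x}a_{\Sym}(p)\Big|\ll m\sqrt{x}\log(kqmx).
\]

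Combining the two bounds gives $|\pi_{f,I}(x)-\mu_{\mathrm{ST}}(I)\pi(x)|\ll \pi(x)/M+M\sqrt{x}\log(kqMx)$, and choosing $M$ of order $x^{1/4}/\log(kqx)$ balances the two terms and produces the first assertion. For the Linnik-type bound, I insert this estimate into $\pi_{f,I}(x)\ge\mu_{\mathrm{ST}}(I)\pi(x)-\mathrm{error}$ and require the main term to exceed twice the error; this reduces to a condition of the form $\mu x^{1/4}\gg\log(kqx)$, which a short computation shows is satisfied as soon as $x$ is a sufficiently large absolute constant times $\mu^{-4}\log^2(kq/\mu)$, at which point $\pi_{f,I}(x)\ge 1$ and there must exist a prime $p\le x$ with $\theta_p\in I$.

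The main technical obstacle is securing the linear-in-$m$ dependence in the GRH estimate for $\sum_{p\le x}a_{\Sym}(p)$. The crude GRH bound $\sqrt{x}(\log q(\Sym)\,x)^2$ is quadratic in $m$ because the analytic conductor grows like $(kq)^{cm}$, and would only give an error of order $x^{5/6}$ after the optimization in $M$. Isolating the $m$-dependence from that of the conductor---by handling the zero sum up to a height $T$ chosen nearly optimally and controlling the degree-dependence of the zero-counting function $N_{\Sym}(T)$---is what drives the target $x^{3/4}$ exponent.
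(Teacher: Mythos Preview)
The paper does not give an independent proof of this theorem; it simply cites Rouse--Thorner \cite{RT} for the error term and Chen--Park--Swaminathan \cite{CPS} for the least-prime bound, noting that the automorphy hypothesis in those papers is now a theorem of Newton--Thorne. Your sketch for the first assertion is essentially the approach of \cite{RT}: the Beurling--Selberg/Erd\H{o}s--Tur\'an reduction to Chebyshev-weighted prime sums, the GRH bound for $\sum_{p\le x}a_{\Sym}(p)$ with linear dependence on $m$ (which you rightly identify as the crucial point), and the optimization $M\asymp x^{1/4}/\log(kqx)$ all match. So for the first part your proposal is correct and aligned with the cited source.

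Your deduction of the Linnik-type bound, however, has a genuine gap. The condition $\mu x^{1/4}\gg\log(kqx)$ is \emph{not} satisfied at $x\asymp\mu^{-4}(\log(kq/\mu))^2$: at that value one has $\mu x^{1/4}\asymp(\log(kq/\mu))^{1/2}$ while $\log(kqx)\asymp\log(kq/\mu)$, so the inequality fails once $kq/\mu$ is large. Inserting the already-optimized error $x^{3/4}\log(kqx)/\log x$ into $\pi_{f,I}(x)\ge\mu\pi(x)-\mathrm{error}$ only yields a least prime of size $\ll\mu^{-4}(\log(kq/\mu))^4$, two logarithms worse than claimed. The sharper exponent $2$ in \cite{CPS} is not obtained by this route; one must return to the pre-optimization inequality, fix $M\asymp 1/\mu$, and apply a smoothed explicit formula directly (in the style of Lagarias--Odlyzko least-prime arguments) rather than pass through the generic discrepancy bound. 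Simply put, optimizing $M$ for the discrepancy and optimizing $M$ for the least prime are different problems, and your ``short computation'' conflates them.
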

\begin{proof}
	Rouse and the author \cite{RT} proved the first result for squarefree levels.  Chen, Park, and Swaminathan \cite{CPS} proved the second result when $f$ corresponds with an elliptic curve by modularity, again for squarefree levels.  Their proof extends to other $f$ with little additional effort using \cref{thm:automorphy} below.  The work in \cite{CPS,RT} assumed the automorphy of the symmetric powers of $f$, which is now known unconditionally \cite{NT,NT2}.
\end{proof}

\begin{remark}
The authors in \cite{CPS,RT} assumed a squarefree level in order to produce strong explicit bounds on $\Cr{thm1.3}$ and $\Cr{ZLinnike3}$.  The orders of magnitude do not change when $q$ is not squarefree, but the task of obtaining strong bounds on the implied constants seems difficult.
\end{remark}

\begin{remark}
An error term of size $c_{f,\epsilon}x^{\frac{1}{2}+\epsilon}$ is expected for all fixed $\epsilon>0$.  See \cite[Thm 1.4]{MR3860468} for some compelling on-average results in this direction.
\end{remark}

If one assumes GRH for the Rankin--Selberg $L$-functions associated to the tensor products of $\mathrm{Sym}^{m_1} f_1$ and $\mathrm{Sym}^{m_2} f_2$ for all $m_1,m_2\geq 0$, then \cref{thm:joint} improves as follows.

\begin{theorem}
\label{thm:joint_GRH}
For $i=1,2$, let $f_i\in S_{k_i}^{\mathrm{new}}(\Gamma_0(q_i))$ and $I_i$ be as in \cref{thm:joint}.  If $f_1\not\sim f_2$ and the Rankin--Selberg $L$-functions $L(s,\mathrm{Sym}^{m_1} f_1\times \mathrm{Sym}^{m_2} f_2)$ satisfy GRH for all integers $m_1,m_2\geq 0$, then there exists a constant $\Cl[abcon]{thm1.5}$ such that
	\[
	|\pi_{f_1,f_2,I_1,I_2}(x) - \mu_{\mathrm{ST}}(I_1)\mu_{\mathrm{ST}}(I_2)\pi(x)|\leq \Cr{thm1.5}x^{\frac{5}{6}}\log(k_1 q_1 k_2 q_2 x)^{\frac{1}{3}},\qquad x\geq 3.
	\]
\end{theorem}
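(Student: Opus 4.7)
The plan is to combine the Chebyshev/Beurling--Selberg polynomial approximation used in the proof of \cref{thm:GRH} with the joint setup of \cref{thm:joint}, now leveraging GRH for every relevant Rankin--Selberg $L$-function. Recall that at primes $p \nmid q_i$ one has $U_m(\cos\theta_p^{(i)}) = \lambda_{\mathrm{Sym}^m f_i}(p)$, where $U_m$ is the Chebyshev polynomial of the second kind, and $\{U_m(\cos\theta)\}_{m\geq 0}$ is orthonormal for $d\mu_{\mathrm{ST}}$.

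First I would approximate the indicator functions in the Chebyshev basis. Following the Vaaler--Selberg construction adapted to the Sato--Tate measure in \cite{RT,CPS}, for each $I_i$ and parameter $M \geq 1$ one produces majorants and minorants
\[
F_{I_i}^{\pm}(\theta) = \sum_{m=0}^{M} c_{m,i}^{\pm}\,U_m(\cos\theta)
\]
satisfying $F_{I_i}^{-} \leq \mathbf{1}_{I_i} \leq F_{I_i}^{+}$ on $[0,\pi]$, $\int (F_{I_i}^{+} - F_{I_i}^{-})\,d\mu_{\mathrm{ST}} \ll 1/M$, and coefficient bounds $|c_{m,i}^{\pm}| \ll 1/(m+1)$ with $c_{0,i}^{\pm} = \mu_{\mathrm{ST}}(I_i) + O(1/M)$. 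Using the identity $\mathbf{1}_{I_1}\mathbf{1}_{I_2} - F_{I_1}^{+}F_{I_2}^{+} = (\mathbf{1}_{I_1} - F_{I_1}^{+})F_{I_2}^{+} + \mathbf{1}_{I_1}(\mathbf{1}_{I_2} - F_{I_2}^{+})$ (and its analogue with minorants), together with the uniform boundedness of the polynomial factors, the problem reduces to estimating
\[
\sum_{m_1=0}^{M}\sum_{m_2=0}^{M} c_{m_1,1}^{\pm}c_{m_2,2}^{\pm}\sum_{\substack{p\leq x\\ p\nmid q_1q_2}}\lambda_{\mathrm{Sym}^{m_1}f_1}(p)\lambda_{\mathrm{Sym}^{m_2}f_2}(p),
\]
plus single-variable error sums already controlled by \cref{thm:GRH}. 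The $(m_1,m_2)=(0,0)$ term supplies the main term $\mu_{\mathrm{ST}}(I_1)\mu_{\mathrm{ST}}(I_2)\pi(x)$ up to $O(\pi(x)/M)$.

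Next I would bound each nontrivial inner sum using GRH. At unramified primes the identity $\lambda_{\mathrm{Sym}^{m_1}f_1 \times \mathrm{Sym}^{m_2}f_2}(p) = \lambda_{\mathrm{Sym}^{m_1}f_1}(p)\lambda_{\mathrm{Sym}^{m_2}f_2}(p)$ is standard. Because $f_1 \not\sim f_2$ and neither is CM, the work of Newton--Thorne together with Ramakrishnan's strong multiplicity one (as used already in the proof of \cref{thm:joint}) ensures that $\mathrm{Sym}^{m_1}f_1 \not\cong \mathrm{Sym}^{m_2}f_2$ for all $(m_1,m_2) \neq (0,0)$, so $L(s,\mathrm{Sym}^{m_1}f_1\times\mathrm{Sym}^{m_2}f_2)$ is entire. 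The standard GRH-based explicit formula, applied with the bound $\log C(\mathrm{Sym}^{m_1}f_1\times\mathrm{Sym}^{m_2}f_2) \ll (m_1+1)(m_2+1)\log(k_1q_1k_2q_2(m_1+1)(m_2+1))$ on the analytic conductor, then yields
\[
\Big|\sum_{p \leq x}\lambda_{\mathrm{Sym}^{m_1}f_1}(p)\lambda_{\mathrm{Sym}^{m_2}f_2}(p)\Big| \ll x^{1/2}(m_1+1)(m_2+1)\log(k_1q_1k_2q_2(m_1+1)(m_2+1)x).
\]
Multiplying by $|c_{m_1,1}^{\pm}c_{m_2,2}^{\pm}| \ll 1/((m_1+1)(m_2+1))$ and summing over $0 \leq m_1,m_2 \leq M$ with $(m_1,m_2) \neq (0,0)$ contributes $\ll x^{1/2}M^{2}\log(k_1q_1k_2q_2Mx)$.

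Finally I would optimize in $M$. The total error is
\[
\ll \frac{\pi(x)}{M} + x^{1/2}M^{2}\log(k_1q_1k_2q_2x),
\]
and the choice $M \asymp x^{1/6}/\log(k_1q_1k_2q_2x)^{1/3}$ balances the two terms and yields the stated bound $\ll x^{5/6}\log(k_1q_1k_2q_2x)^{1/3}$. The main obstacle I anticipate is tracking the $(m_1,m_2)$-dependence of the gamma factors and conductor of $L(s,\mathrm{Sym}^{m_1}f_1\times\mathrm{Sym}^{m_2}f_2)$ carefully enough in the GRH explicit formula that the double sum over $(m_1,m_2)$ does not accumulate extra logarithmic losses; this is the degree-aspect Riemann--von Mangoldt estimate that already underlies the arguments in \cite{RT,CPS}, and the present task is essentially its bi-symmetric-power Rankin--Selberg extension.
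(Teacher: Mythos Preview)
Your proposal is correct and is essentially a reconstruction of the argument in Bucur--Kedlaya \cite{BK}, which is all the paper itself invokes (its proof is a two-line citation plus the remark that the extension from elliptic curves to general non-CM newforms is routine once one has Newton--Thorne). The ingredients you list---the Erd\H{o}s--Tur\'an-type reduction, the GRH explicit formula with the conductor bound $\log C(\mathrm{Sym}^{m_1}f_1\times\mathrm{Sym}^{m_2}f_2)\ll (m_1{+}1)(m_2{+}1)\log(k_1q_1k_2q_2(m_1{+}1)(m_2{+}1))$ coming from \eqref{eqn:BH} and \eqref{eqn:AC_bound_log}, and the balancing $M\asymp x^{1/6}\log(k_1q_1k_2q_2x)^{-1/3}$---are exactly the expected ones.

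One minor methodological difference worth noting: you build the two-dimensional majorant/minorant as a \emph{product} of one-dimensional Selberg polynomials and control the cross terms via the telescoping identity together with the uniform boundedness of the Beurling--Selberg functions. The paper, in its unconditional joint result (\cref{thm:joint}, equation \eqref{eqn:ET2}), instead quotes Cochrane's genuine two-dimensional Beurling--Selberg construction, which directly yields majorants and minorants for the box $I_1\times I_2$. Your route is slightly more elementary and avoids citing Cochrane, at the cost of needing the (true) fact that $\|F_{I_i}^{\pm}\|_\infty$ is bounded independently of $M$; Cochrane's construction sidesteps that issue but requires an extra reference. Either path gives the same inequality and hence the same final bound.
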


\begin{proof}
	Bucur and Kedlaya \cite{BK} proved this when $f_1$ and $f_2$ correspond with elliptic curves by modularity.  One can extend their proof to other $f_1$ and $f_2$ with little additional effort.
\end{proof}

%
%


\subsection*{Acknowledgements} I thank Maksym Radziwi{\l}{\l}, Jeremy Rouse, Jack Thorne, John Voight, and Peng-Jie Wong for helpful discussions and the anonymous referees.

\section{Proof of \cref{thm:main_theorem,thm:joint}}

In this section, we reduce the proofs of \cref{thm:main_theorem,thm:joint} to the task of proving a uniform version of the prime number theorem for certain classes of $L$-functions.  We use the Vinogradov notation $F\ll G$ to denote the existence of an absolute and effectively computable constant $c>0$ (not necessarily the same in each occurrence) such that $|F|\leq c|G|$ in the range indicated.  We write $F=G+O(H)$ to denote that $|F-G|\ll H$.

\subsection{Proof of \cref{thm:main_theorem}}

Let $I=[\alpha,\beta]\subseteq[0,\pi]$ be an interval.  Let $\mathbf{1}_I$ be the indicator function of the interval $I$, and define $\pi_{f,I}(x)$ as in  \eqref{eqn:Sato-Tate}.  Let
\[
U_m(\cos\theta_p)=\frac{\sin((m+1)\theta_p)}{\sin\theta_p}=\sum_{j=0}^m e^{(2j-m)i\theta_p}
\]
be the $m$-th Chebyshev polynomial of the second type.  These polynomials form an orthonormal basis for $L^2([0,\pi],\mu_{\mathrm{ST}})$ with respect to the usual inner product $\langle f,g\rangle=\int_0^{\pi}f(\theta)g(\theta)d\mu_{\mathrm{ST}}$.

Montgomery \cite[Chapter 1]{MR1297543} used the work of Beurling and Selberg to efficiently majorize and minorize the indicator function of a subinterval of $[0,1]$ using carefully constructed trigonometric polynomials.  One performs a suitable change of variables to handle subintervals of $[0,\pi]$ and a change of basis to express the trigonometric polynomials in terms of the Chebyshev polynomials $U_m(\cos\theta)$ (see \cite[Section 3]{RT}). For any integer $M\geq 3$, we find that
\begin{equation}
\label{eqn:ET1}
|\pi_{f,I}(x)-\mu_{\mathrm{ST}}(I)\pi(x)|\ll \frac{\pi(x)}{M}+\sum_{m=1}^M\frac{1}{m}\Big|\sum_{\substack{p\leq x \\ p\nmid q}}U_m(\cos\theta_p)\Big|,\qquad x\geq 3.
\end{equation}
Note that $\pi(x)\sim x/\log x$ by the prime number theorem.  By partial summation, we have
\begin{equation}
\label{eqn:partial_sums_1}
\sum_{\substack{p\leq x \\ p\nmid q}}U_m(\cos\theta_p) = \frac{\theta_{f,m}(x)}{\log x}-\int_2^x\frac{\theta_{f,m}(t)}{t(\log t)^2}dt,\qquad \theta_{f,m}(x):=\sum_{\substack{p\leq x \\ p\nmid q}}U_m(\cos\theta_p)\log p.
\end{equation}
\begin{proposition}
\label{prop:PNT_Sym}
	Let $f$ be as in \cref{thm:main_theorem}.  There exist constants $\Cl[abcon]{LFZDE_u}$ (suitably large) and $\Cl[abcon]{ZFR_u}$ and $\Cl[abcon]{M_range_u}$ (suitably small) such that if $1\leq m\leq \Cr{M_range_u}\sqrt{\log x}/\sqrt{\log(kq\log x)}$, then
	\begin{align*}
	|\theta_{f,m}(x)|\ll m^2 x^{1-\frac{1}{\Cr{LFZDE_u}m}}+m^2 x \Big(\exp\Big[-\Cr{ZFR_u}\frac{\log x}{m^2\log(kqm)}\Big]+\exp\Big[-\Cr{ZFR_u}\frac{\sqrt{\log x}}{\sqrt{m}}\Big]\Big).
	\end{align*}
\end{proposition}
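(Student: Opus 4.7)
Proof Plan:

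The plan is to interpret $\theta_{f,m}(x)$ as the prime-weighted sum attached to the standard $L$-function $L(s,\Sym)$, and then to apply the explicit formula together with an effective zero-free region and a log-free zero-density estimate. By the decomposition $U_m(\cos\theta_p)=\sum_{j=0}^{m}e^{(m-2j)i\theta_p}$, for each $p\nmid q$ the value $U_m(\cos\theta_p)$ is the $p$-th Dirichlet coefficient of $L(s,\Sym)$, and $U_m(\cos(k\theta_p))$ is the corresponding coefficient of $-L'/L(s,\Sym)$ at $p^k$. By the Newton--Thorne theorem \cite{NT,NT2}, $L(s,\Sym)$ is the standard $L$-function of a cuspidal automorphic representation of $\GL_{m+1}(\A)$; in particular it is entire, and its analytic conductor satisfies $\log\mathfrak{q}(\Sym)\ll m\log(kqm)$. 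Bounding the prime-power contribution trivially via $|U_m|\le m+1$ shows that $\theta_{f,m}(x) = \psi(x,\Sym) + O(m\sqrt{x})$, where $\psi(x,\Sym) := \sum_{n\le x}\Lambda_{\Sym}(n)$ and $\Lambda_{\Sym}$ is the Dirichlet-series coefficient of $-L'/L(s,\Sym)$.

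The core of the argument is a truncated Perron / contour-shift application of the explicit formula. Cuspidality eliminates any main term, so one obtains $\psi(x,\Sym) \approx -\sum_{|\gamma|\le T} \frac{x^\rho}{\rho} + \text{(truncation error)}$. Two analytic inputs control the zero sum uniformly in $m$: (i) a classical zero-free region of the shape $\sigma\ge 1-c/(m^2\log(kqm(|t|+3)))$ for $L(s,\Sym)$, valid because $\Sym$ is self-dual and the Hoffstein--Ramakrishnan-type non-vanishing for $L(s,\Sym\times\Sym)$ rules out Siegel-type zeros; and (ii) a log-free zero-density estimate of the form $N_{\Sym}(\sigma,T)\ll (\mathfrak{q}(\Sym)T)^{A(1-\sigma)}$, uniformly in $m$, via the Soundararajan--Thorner machinery. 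The three terms in the stated bound correspond to three naturally distinct contributions. The log-free density estimate, applied to zeros with large real part and combined with partial summation, produces the term $m^2 x^{1-1/(\Cr{LFZDE_u}m)}$. The classical zero-free region, controlling the remaining zeros at bounded height, produces the factor $\exp[-\Cr{ZFR_u}\log x/(m^2\log(kqm))]$. Finally, the truncation error in Perron's formula, optimized by choosing $T\approx\exp(\sqrt{\log x/m})$, yields the factor $\exp[-\Cr{ZFR_u}\sqrt{\log x/m}]$.

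The principal obstacle is ensuring that both analytic inputs hold with absolute, effective constants and with $m$-dependence explicit enough that the proposition is nontrivial in the range $1\le m\le \Cr{M_range_u}\sqrt{\log x}/\sqrt{\log(kq\log x)}$. Because the analytic conductor of $\Sym$ grows like $(kq)^{m+1}$, the classical ZFR and the LFZDE both degrade rapidly in $m$, and one must carefully balance the exponential conductor growth against the polynomial-in-$m$ saving $x^{-1/(Cm)}$ furnished by the density estimate; this balance is precisely what fixes the admissible range of $m$. The self-duality of $\Sym$ is used repeatedly, both to obtain the sharper classical zero-free region for self-dual cuspidal representations and to rule out an ineffective Landau--Siegel contribution, so that every constant appearing in the final bound remains effective.
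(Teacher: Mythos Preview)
Your plan is essentially the same as the paper's: identify $\theta_{f,m}$ with the Chebyshev function of $\mathrm{Sym}^m\pi_f$ via Newton--Thorne, feed in the conductor bound $\log C(\Sym)\ll m\log(kqm)$, and combine a self-dual zero-free region, a log-free zero density estimate, and a Siegel-zero repulsion argument inside an explicit-formula framework. Two small technical points are worth flagging. First, your reduction $\theta_{f,m}(x)=\psi(x,\Sym)+O(m\sqrt{x})$ is too optimistic at the ramified primes: for $p\mid q$ the local parameters of $\Sym$ are only bounded by $p^{1-1/(m+1)}$, so the ramified contribution to $\psi$ is $O(mx^{1-1/O(m)})$ rather than $O(m\sqrt{x})$; this is harmless since it is absorbed by your first error term, but it should be stated correctly. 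Second, the paper organizes the explicit formula with a smooth weight rather than truncated Perron, so that all three error terms arise from a single optimized sum over zeros (the two exponentials come from an infimum $\inf_{t\ge 3}\{c\log x/(m\log(C(\pi)t))+\log t\}$, and the $x^{1-1/O(m)}$ from the smoothing parameter $\epsilon\asymp x^{-1/O(m)}$); your attribution of the $\exp[-c\sqrt{\log x/m}]$ term to Perron truncation and the $x^{1-1/(cm)}$ term to the density estimate is a legitimate alternative bookkeeping that yields the same final bound.
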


By \eqref{eqn:ET1}, \eqref{eqn:partial_sums_1}, and \cref{prop:PNT_Sym}, if $3\leq M\leq \Cr{M_range_u} \sqrt{\log x}/\sqrt{\log(kq \log x)}$, then
\begin{equation}
\label{eqn:discrep1}
\begin{aligned}
&|\pi_{f,I}(x)-\mu_{\mathrm{ST}}(I)\pi(x)|\\
&\ll \pi(x)\Big(\frac{1}{M}+\sum_{m=1}^M m\Big(x^{-\frac{1}{2\Cr{LFZDE_u}m}}+\exp\Big[-\frac{\Cr{ZFR_u}}{2}\frac{\log x}{m^2\log(kqm)}\Big]+\exp\Big[-\frac{\Cr{ZFR_u}}{2}\frac{\sqrt{\log x}}{\sqrt{m}}\Big]\Big)\Big).
\end{aligned}
\end{equation}

\begin{proof}[Proof of \cref{thm:main_theorem}]
	Write $\mu=\mu_{ST}(I)$.  There exists a suitably large constant $\Cl[abcon]{range_const_1}$ such that if $x\geq \exp(\Cr{range_const_1}(\frac{1}{\mu}\log\frac{kq}{\mu})^2)$, then the bound \eqref{eqn:st1} follows from \eqref{eqn:discrep1} by choosing $\Cl[abcon]{Mconst}>0$ to be a sufficiently small constant and $M=\lceil\Cr{Mconst}\sqrt{\log x}/\log(kq \log x)\rceil\geq 3$.  For all remaining values of $x$, \eqref{eqn:st1} is trivial.  The bounds in \eqref{eqn:st2} follow from \eqref{eqn:discrep1} by choosing $\Cl[abcon]{Linnik2}>0$ to be a sufficiently large constant, $M=\lceil \Cr{Linnik2}/\mu_{\mathrm{ST}}(I)\rceil$, and $x\geq (kqM)^{\Cr{Linnik2}M^2\log(M)/\Cr{ZFR_u}}$.
\end{proof}

\subsection{Proof of \cref{thm:joint}}

For $i=1,2$, let $\mu_i=\mu_{\mathrm{ST}}(I_i)$ and let $f_i$ be as in \cref{thm:joint} with Sato--Tate angles $\{\theta_p^{(i)}\}$.  Let $\pi_{f_1,f_2,I_2,I_2}(x)$ be as in \eqref{eqn:pi2}.  Cochrane \cite{Cochrane} carried out a version of Montgomery's analysis which constructs trigonometric polynomials which efficiently majorize and minorize the indicator function of $R_1\times R_2$, where $R_i=[a_i,b_i]\subseteq[0,1]$.  Upon performing a change of variables and a change of basis similar to \cite[Section 3]{RT} to express these polynomials in terms of Chebyshev polynomials, we find for any integer $M\geq 3$ that
	\begin{equation}
	\label{eqn:ET2}
	\begin{aligned}
	&|\pi_{f_1,f_2,I_1,I_2}(x)-\mu_{\mathrm{ST}}(I_1) \mu_{\mathrm{ST}}(I_2) \pi(x)|\\
	&\ll \frac{\pi(x)}{M}+\sum_{\substack{0\leq m_1,m_2\leq M \\ m_1 m_2\neq 0}}\frac{1}{(m_1+1)(m_2+1)}\Big|\sum_{\substack{p\leq x \\ p\nmid q_1 q_2}}U_{m_1}(\cos\theta_p^{(1)})U_{m_2}(\cos\theta_p^{(2)})\Big|,
	\end{aligned}
	\quad x\geq 3.
	\end{equation}
	By partial summation, we have
\begin{equation}
\label{eqn:partial_sums_2}
\sum_{\substack{p\leq x \\ p\nmid q_1 q_2}}U_{m_1}(\cos\theta_p^{(1)})U_{m_2}(\cos\theta_p^{(2)}) = \frac{\theta_{f_1,f_2,m_1,m_2}(x)}{\log x}-\int_2^x\frac{\theta_{f_1,f_2,m_1,m_2}(t)}{t(\log t)^2}dt,
\end{equation}
where
\begin{equation}
\label{eqn:theta_2}
\theta_{f_1,f_2,m_1,m_2}(x):=\sum_{\substack{p\leq x \\ p\nmid q_1 q_2}}U_{m_1}(\cos\theta_p^{(1)})U_{m_2}(\cos\theta_p^{(2)})\log p.
\end{equation}
\begin{proposition}
\label{prop:PNT_Sym_2}
	For $i=1,2$, let $f_i$ be as in \cref{thm:joint}.  There exist constants $\Cl[abcon]{LFZDE_v}$ and $\Cl[abcon]{Siegel2_v}$ (suitably large) and $\Cl[abcon]{ZFR_v}$, $\Cl[abcon]{M_range_v}$, and $\Cl[abcon]{Siegel_v}$ (suitably small) such that if
	\begin{equation}
	\label{eqn:M-rng}
	1\leq m_1,m_2\leq M\leq \Cr{M_range_v} \sqrt{\log\log x}/\log(k_1 q_1 k_2 q_2\log\log x),
	\end{equation}
	then
	\begin{equation*}
	\begin{aligned}
		|\theta_{f_1,f_2,m_1,m_2}(x)|&\ll x^{1-\frac{\Cr{Siegel_v}}{(k_1 q_1 k_2 q_2 M)^{\Cr{Siegel2_v}M^2}}}+ (m_1 m_2)^2 x^{1-\frac{1}{\Cr{LFZDE_v}M^2}}\\
	&+(m_1 m_2)^2 x \Big(\exp\Big[-\Cr{ZFR_v}\frac{\log x}{M^2\log(k_1 q_1 k_2 q_2 M)}\Big]+\exp\Big[-\Cr{ZFR_v}\frac{\sqrt{\log x}}{M}\Big]\Big).
	\end{aligned}
	\end{equation*}
\end{proposition}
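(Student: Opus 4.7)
The plan is to recognize $\theta_{f_1,f_2,m_1,m_2}$ as a prime-weighted partial sum of the Dirichlet coefficients of the Rankin--Selberg $L$-function
\[
L(s,\mathrm{Sym}^{m_1}f_1\times\mathrm{Sym}^{m_2}f_2),
\]
and to bound it via the standard analytic package: explicit formula, zero-free region, log-free zero density estimate, and a Siegel-type bound on any exceptional zero. The starting observation is that $U_{m}(\cos\theta_p)=a_{\mathrm{Sym}^{m}f}(p)$ at unramified primes, so
\[
U_{m_1}(\cos\theta_p^{(1)})U_{m_2}(\cos\theta_p^{(2)})=a_{\mathrm{Sym}^{m_1}f_1}(p)\,a_{\mathrm{Sym}^{m_2}f_2}(p)
\]
for $p\nmid q_1 q_2$, which is the $p$-th coefficient of the Rankin--Selberg $L$-function. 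By Newton--Thorne each $\mathrm{Sym}^{m_i}f_i$ is cuspidal on $\GL_{m_i+1}(\A)$, and the hypothesis $f_1\not\sim f_2$ together with the non-CM assumption forces $\mathrm{Sym}^{m_1}f_1\not\cong\mathrm{Sym}^{m_2}f_2$ for $m_1,m_2\geq 1$ (an isomorphism would require $m_1=m_2$ on dimensional grounds, after which a Ramakrishnan-type rigidity result returns $f_1\sim f_2$). Hence the Rankin--Selberg $L$-function is entire of order one with analytic conductor at most $(k_1 q_1 k_2 q_2 M)^{O(M)}$.

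I would then replace $\theta_{f_1,f_2,m_1,m_2}(x)$ by the von Mangoldt--weighted sum
\[
\psi_{f_1,f_2,m_1,m_2}(x):=\sum_{n\leq x}\Lambda(n)\,a_{\mathrm{Sym}^{m_1}f_1\times\mathrm{Sym}^{m_2}f_2}(n),
\]
discarding the contribution of $n=p^k$ with $k\geq 2$ via the Deligne bound $|a_{\mathrm{Sym}^{m}f}(p)|\leq m+1$ at a cost of $\ll (m_1 m_2)^2\sqrt{x}$. A smoothed truncated Perron formula, followed by a contour shift to the left of $\re(s)=1$, then expresses $\psi_{f_1,f_2,m_1,m_2}(x)$ as $-\sum_\rho x^\rho/\rho$ plus a remainder that is negligible throughout the regime \eqref{eqn:M-rng}, where $\rho$ ranges over non-trivial zeros.

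The zero sum is split three ways, matching the three summands in the claimed bound. A potential Landau--Siegel real zero $\beta_0$ is controlled by a Hoffstein--Ramakrishnan/Moreno-style lower bound of the form $1-\beta_0\geq \Cr{Siegel_v}(k_1 q_1 k_2 q_2 M)^{-\Cr{Siegel2_v}M^2}$, producing the first term. Remaining zeros inside a classical/Vinogradov zero-free region contribute the two exponential terms (de la Vall\'ee Poussin decay in the first, and the wider off-real region in the second). All other zeros are absorbed by a Kowalski--Michel type log-free zero density estimate for the Rankin--Selberg $L$-function, uniform in the analytic conductor, which yields the $(m_1 m_2)^2 x^{1-1/(\Cr{LFZDE_v}M^2)}$ term; the factor $(m_1 m_2)^2$ comes from the $L^\infty$ norm $\|U_{m_i}\|_\infty\leq m_i+1$.

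The main obstacle is establishing these three analytic inputs -- zero-free region, log-free density estimate, and Siegel-type bound -- with explicit polynomial dependence on the analytic conductor and uniform in $m_1,m_2\leq M$ throughout the range \eqref{eqn:M-rng}. The Siegel bound is the limiting factor, since the exponent $(k_1 q_1 k_2 q_2 M)^{-\Cr{Siegel2_v}M^2}$ is what ultimately forces the weak $1/\sqrt{\log\log x}$ savings in \cref{thm:joint}, in contrast to the $1/\sqrt{\log x}$ savings in \cref{thm:main_theorem}, where the self-duality of $\mathrm{Sym}^{m}f\times\mathrm{Sym}^{m}f$ removes the Siegel obstruction entirely.
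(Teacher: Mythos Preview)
Your proposal is essentially the paper's own approach: identify $\theta_{f_1,f_2,m_1,m_2}(x)$ with the prime sum for $L(s,\mathrm{Sym}^{m_1}f_1\times\mathrm{Sym}^{m_2}f_2)$ via \eqref{eqn:coeffs_2}, invoke Newton--Thorne (\cref{thm:automorphy}) and \cref{lem:Rajan} for cuspidality and entirety, bound the analytic conductor by \eqref{eqn:AC_bound_log}, and feed everything into the uniform prime number theorem \cref{prop:PNT}(2), which packages the explicit formula, the zero-free region of \cref{cor:ZFR}, the log-free density estimate of \cref{prop:LFZDE}, and the Siegel bound of \cref{lem:Siegel_2}.

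Two minor corrections to your narrative. First, the two exponential terms do not come from two different zero-free regions; they both arise from the single de la Vall\'ee Poussin region of \cref{cor:ZFR} after optimizing $\inf_{t\geq 3}\big(\frac{c\log x}{M^2\log(C(\pi)C(\pi')t)}+\log t\big)$ as in \cref{lem:eta_bound} (the split is a calculus artifact, not an analytic one). Second, your closing remark mislocates why the single-form case avoids a Siegel term: it is not self-duality of a Rankin--Selberg product but the trick of \cref{lem:Siegel_1}, which embeds $L(s,\mathrm{Sym}^m f)$ into $L(s,\Pi_m\times\tilde{\Pi}_m)$ for $\Pi_m=\mathbbm{1}\boxplus\mathrm{Sym}^2\pi_f\boxplus\mathrm{Sym}^m\pi_f$ and uses \cref{prop:GHL} to rule out a real exceptional zero outright.
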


Note that $\theta_{f_1,f_2,0,m_2}(x)=\theta_{f_2,m_2}(x)$ and $\theta_{f_1,f_2,m_1,0}(x)=\theta_{f_1,m_1}(x)$.  Therefore, by \cref{prop:PNT_Sym}, \eqref{eqn:ET2}, \eqref{eqn:partial_sums_2}, \eqref{eqn:theta_2}, and \cref{prop:PNT_Sym_2}, we find that if \eqref{eqn:M-rng} holds, then
\begin{multline}
	\label{eqn:discrep2}
		\Big|\pi_{f_1,f_2,I_1,I_2}(x)-\mu_{\mathrm{ST}}(I_1) \mu_{\mathrm{ST}}(I_2)\pi(x)\Big|\\
		\ll \pi(x)\Big\{\frac{1}{M}+M^2 x^{-\frac{\Cr{Siegel_v}}{2(k_1 q_1 k_2 q_2 M)^{\Cr{Siegel2_v}M^2}}}+M^4 x^{-\frac{1}{2\Cr{LFZDE_v}M^2}}\\
		+M^4 \Big(\exp\Big[-\Cr{ZFR_v}\frac{\log x}{2M^2\log(k_1 q_1 k_2 q_2 M)}\Big]+\exp\Big[-\Cr{ZFR_v}\frac{\sqrt{\log x}}{2M}\Big]\Big)\Big\}.
	\end{multline}
\begin{proof}[Proof of \cref{thm:joint}]
Write $\mu = \mu_{\mathrm{ST}}(I_1)\mu_{\mathrm{ST}}(I_2)$.  There exists a suitably large constant $\Cl[abcon]{c_range_2}$ such that if $x\geq \exp\exp(\Cr{c_range_2}(\frac{1}{\mu}\log\frac{k_1 q_1 k_2 q_2}{\mu})^2)$, then the theorem follows from \eqref{eqn:discrep2} by choosing $\Cl[abcon]{Mconst2}>0$ to be a sufficiently small constant and $M=\lceil\Cr{Mconst2}\frac{\sqrt{\log\log x}}{\log(k_1 q_1 k_2 q_2\log\log x)}\rceil\geq 3$.  For all remaining values of $x$, the theorem is trivial.
\end{proof}

\subsection{Outline for proofs of \cref{prop:PNT_Sym,prop:PNT_Sym_2}}

\cref{prop:PNT_Sym,prop:PNT_Sym_2} will follow from a sufficiently uniform unconditional prime number theorem for the $L$-functions associated to symmetric powers of the cuspidal automorphic resepresentations associated to newforms and the Rankin--Selberg convolutions of said symmetric powers.  To prove such prime number theorems, we first review well-known properties of $L$-functions in \cref{sec:properties}.  We prove zero-free regions and log-free zero density estimates for $L$-functions which satisfy the generalized Ramanujan conjecture in \cref{sec:zeros}.  In \cref{sec:PNT}, we use the results in \cref{sec:zeros} to prove a highly uniform prime number theorem for $L$-functions satisfying the generalized Ramanujan conjecture.  Finally, in \cref{sec:Sym}, we use the aforementioned work of Newton and Thorne \cite{NT,NT2} to show how $L$-functions of symmetric powers and their convolutions fit into the framework of \cref{sec:PNT}.  We then prove  \cref{prop:PNT_Sym,prop:PNT_Sym_2}.

Our work requires careful attention to the degree dependence in several $L$-functions estimates.  Other problems in analytic number theory typically do not require such care, but in our setting, the degree aspect of our estimates is the aspect that matters most since it directly determines the quality of our results.  In order to make the degree dependencies in the necessary estimates as strong as we can, we will refine the work in \cite[Appendix]{Humphries} and \cite{ST} to prove the necessary zero-free regions and log-free zero density estimates.

\section{Properties of $L$-functions}
\label{sec:properties}

\subsection{Standard $L$-functions}

Let $\A$ be the ring of adeles of $\Q$, and $\mathfrak{F}_{m}$ be the set of cuspidal automorphic representations of $\GL_m(\A)$ with unitary central character, which assume to be normalized so that it is trivial on the diagonally embedded copy of the positive real numbers.  Given $\pi\in\mathfrak{F}_m$, let $\tilde{\pi}$ be the the representation which is contragredient to $\pi$, and let $q_{\pi}\geq 1$ be the conductor of $\pi$.  Write the finite part of $\pi$ as a tensor product $\otimes_p \pi_p$ of local representations over primes $p$.  For each $p$, there exist Satake parameters $\alpha_{1,\pi}(p),\ldots,\alpha_{m,\pi}(p)\in\mathbb{C}$ such that the local $L$-function $L(s,\pi_p)$ is given by
\[
L(s,\pi_p)=\prod_{j=1}^m \Big(1-\frac{\alpha_{j,\pi}(p)}{p^s}\Big)^{-1}=\sum_{j=0}^{\infty}\frac{a_{\pi}(p^j)}{p^{js}}.
\]
We have $\alpha_{j,\pi}(p)\neq 0$ for all $j$ when $p\nmid q_{\pi}$, and some of the $\alpha_{j,\pi}(p)$ might equal zero when $p|q_{\pi}$.  The standard $L$-function $L(s,\pi)$ attached to $\pi$ is
\[
L(s,\pi)=\prod_p L(s,\pi_p)=\sum_{n=1}^{\infty}\frac{a_{\pi}(n)}{n^s},
\]
which converges absolutely for $\re(s)>1$.

The gamma factor corresponding to the infinite place of $\Q$ is given by
\[
L(s,\pi_{\infty})=\prod_{j=1}^{m}\Gamma_{\R}(s+\mu_{\pi}(j)),\qquad \Gamma_{\R}(s)=\pi^{-s/2}\Gamma(s/2),
\]
where $\mu_{\pi}(1),\ldots,\mu_{\pi}(n)\in\mathbb{C}$ are the Langlands parameters.  The bounds
\begin{equation}
\label{eqn:LRS}
|\alpha_{j,\pi}(p)|\leq \theta_m,\qquad \re(\mu_{\pi}(j))\geq -\theta_m
\end{equation}
hold for some $0\leq \theta_m\leq \frac{1}{2}-\frac{1}{m^2+1}$.  The generalized Ramanujan conjecture and generalized Selberg eigenvalue conjectures assert that the above inequalities hold with $\theta_m=0$.

Let $r_{\pi}\in\{0,1\}$ be the order of the pole of $L(s,\pi)$ at $s=1$, where $r_{\pi}=1$ if and only if $\pi$ is the trivial representation $\mathbbm{1}$ of $\GL_1(\A)$ whose $L$-function is the Riemann zeta function $\zeta(s)$. The function $\Lambda(s,\pi)=(s(s-1))^{r_{\pi}}q_{\pi}^{s/2}L(s,\pi)L(s,\pi_{\infty})$ is entire of order one.  There exists a complex number $W(\pi)$ of modulus one such that $\Lambda(s,\pi)=W(\pi)\Lambda(1-s,\tilde{\pi})$, where $\tilde{\pi}\in\mathfrak{F}_m$ is the contragredient representation.  We have the equalities of sets
\[
\{\alpha_{j,\tilde{\pi}}(p)\}=\{\overline{\alpha_{j,\pi}(p)}\},\qquad\{\mu_{\tilde{\pi}}(j)\}=\{\overline{\mu_{\pi}(j)}\},
\]
and $q_{\tilde{\pi}}=q_{\pi}$.  We define the analytic conductor $C(\pi)$ by
\[
C(\pi,t) = q_{\pi}\prod_{j=1}^n (1+|\mu_{\pi}(j)+it|),\qquad C(\pi)=C(\pi,0).
\]

Define the numbers $\Lambda_{\pi}(n)$ by the Dirichlet series identity
\[
\sum_{n=1}^{\infty}\frac{\Lambda_{\pi}(n)}{n^s} =-\frac{L'}{L}(s,\pi) = \sum_p \sum_{\ell=1}^{\infty}\frac{\sum_{j=1}^m \alpha_{j,\pi}(p)^{\ell}\log p}{p^{\ell s}},\qquad \re(s)>1.
\]
It was proved in the discussion following \cite[Lem 2.3]{ST} that for all $\eta>0$, we have
	\begin{equation}
		\label{eqn:RS2}
		\sum_{n=1}^{\infty}\frac{|\Lambda_{\pi}(n)|}{n^{1+\eta}}\leq \frac{1}{\eta}+m\log C(\pi)+O(m^2).
	\end{equation}

\subsection{Rankin--Selberg $L$-functions}

Let $\pi\in\mathfrak{F}_m$ and $\pi'\in\mathfrak{F}_{m'}$.  For each prime $p$, we let
\[
L(s,\pi_p\times\pi_p')=\prod_{j=1}^{m}\prod_{j'=1}^{m'}\Big(1-\frac{\alpha_{j,j',\pi\times\pi'}(p)}{p^s}\Big)^{-1}=1+\sum_{j=1}^{\infty}\frac{a_{\pi\times\pi'}(p^j)}{p^{js}}
\]
for suitable complex numbers $\alpha_{j,j',\pi\times\pi'}(p)$.  If $p\nmid q_{\pi}q_{\pi'}$, then we have the equality of sets $\{\alpha_{j,j',\pi\times\pi'}(p)\} = \{\alpha_{j,\pi}(p)\alpha_{j',\pi'}(p)\}$.  A complete description of $\alpha_{j,j',\pi\times\pi'}(p)$ is given in \cite[Appendix]{ST}.  From these Satake parameters, one defines the Rankin--Selberg $L$-function
\[
L(s,\pi\times\pi')=\prod_{p}L(s,\pi_p\times\pi_p')=\sum_{n=1}^{\infty}\frac{a_{\pi\times\pi'}(n)}{n^{s}}
\]
associated to the tensor product $\pi\otimes\pi'$, which converges absolutely for $\re(s)>1$.  We write $q_{\pi\times\pi'}$ for the conductor of $\pi\otimes\pi'$.  Bushnell and Henniart \cite{BH} proved that $q_{\pi\times\pi'}|q_{\pi}^{n'}q_{\pi'}^n$.

The gamma factor corresponding to the infinite place of $\Q$ is given by
\[
L(s,\pi_{\infty}\times\pi_{\infty}')=\prod_{j=1}^{m}\prod_{j'=1}^{m'}\Gamma_{\R}(s+\mu_{\pi\times\pi'}(j,j'))
\]
for suitable complex numbers $\mu_{\pi\times\pi'}(j,j')$.  If both $\pi$ and $\pi'$ are unramified at the infinite place of $\Q$, then we have the equality of sets $\{\mu_{\pi\times\pi'}(j,j')\} = \{\mu_{\pi}(j)+\mu_{\pi'}(j')\}$.  A complete description of the numbers can be found in \cite[Proof of Lemma 2.1]{ST}.  From the explicit descriptions of the numbers $\alpha_{j,j',\pi\times\pi'}(p)$ and $\mu_{\pi\times\pi'}(j,j')$, we find that
\begin{equation}
\label{eqn:LRS2}
|\alpha_{j,j',\pi\times\pi'}(p)|\leq \theta_m+\theta_{m'},\qquad \re(\mu_{\pi\times\pi'}(j,j'))\geq -\theta_m-\theta_{m'}.
\end{equation}

Let $r_{\pi\times\pi'}\in\{0,1\}$ be the order of the pole of $L(s,\pi\times\pi')$ at $s=1$.  We have that $r_{\pi\times\pi'}=1$ if and only if $\pi'=\tilde{\pi}$. The function $\Lambda(s,\pi\times\pi')=(s(s-1))^{r_{\pi\times\pi'}}q_{\pi\times\pi'}^{s/2}L(s,\pi\times\pi')L(s,\pi_{\infty}\times\pi_{\infty}')$ is entire of order one.  There exists a complex number $W(\pi\times\pi')$ of modulus one such that $\Lambda(s,\pi\times\pi')=W(\pi\times\pi')\Lambda(1-s,\tilde{\pi}\times\tilde{\pi}')$.  We define the analytic conductor $C(\pi\times\pi')$ to be
\[
C(\pi\times\pi',t) = q_{\pi\times\pi'}\prod_{j=1}^m \prod_{j'=1}^{m'} (1+|\mu_{\pi\times\pi'}(j,j')+it|),\qquad C(\pi\times\pi')=C(\pi\times\pi',0).
\]
The combined work of Bushnell and Henniart \cite{BH} and Brumley \cite[Appendix]{Humphries} proves that
\begin{equation}
\label{eqn:BH}
C(\pi\times\pi',t)\ll C(\pi\times\pi')(1+|t|)^{m'm},\qquad C(\pi\times\pi')\leq e^{O(m'm)}C(\pi)^{m'}C(\pi')^m.
\end{equation}

Define the numbers $\Lambda_{\pi\times\pi'}(n)$ by the Dirichlet series identity
\[
\sum_{n=1}^{\infty}\frac{\Lambda_{\pi\times\pi'}(n)}{n^s}=-\frac{L'}{L}(s,\pi\times\pi')=\sum_{p}\sum_{\ell=1}^{\infty}\frac{\sum_{j=1}^{m} \sum_{j'=1}^{m'}\alpha_{j,j',\pi\times\pi'}(p)^\ell\log p}{p^{\ell s}},\qquad \re(s)>1.
\]
It was proved in the discussion following \cite[Lem 2.3]{ST} that for all $\eta>0$, we have
	\begin{equation}
		\label{eqn:RS_22}
		\sum_{n=1}^{\infty}\frac{|\Lambda_{\pi\times\pi'}(n)|}{n^{1+\eta}}\leq \frac{1}{\eta}+m'm\log C(\pi\times\pi')+O((m'm)^2).
	\end{equation}

\subsection{Isobaric automorphic representations}

Let $d\geq 1$ be an integer; let $m_1,\ldots, m_d$ be positive integers; let $r = \sum_{i=1}^d m_i$; let $t_1,\ldots,t_d\in\R$; and let $\pi_i\in\mathfrak{F}_{m_i}$ with $1\leq i\leq d$.  Consider the isobaric automorphic representation $\Pi$ of $\GL_{r}(\A)$ given by the isobaric sum
\[
\Pi=\pi_1\otimes|\det|^{it_1}\boxplus\cdots\boxplus\pi_d\otimes|\det|^{it_d}.
\]
The $L$-function associated to $\Pi$ is $L(s,\Pi)=\prod_{j=1}^d L(s+it_j,\pi_j)$, with analytic conductor
\[
C(\Pi,t)=\prod_{j=1}^d C(\pi_j,t+t_j),\qquad C(\Pi)=C(\Pi,0).
\]

Let $d'\geq 1$ be an integer; let $m_1',\ldots, m_{d'}'$ be positive integers; let $r' = \sum_{i'=1}^{d'} m_{i'}'$; let $t_1',\ldots,t_d'\in\R$; and let $\pi_{i'}'\in\mathfrak{F}_{m_{i'}'}$ with $1\leq i'\leq d'$.  Consider the isobaric automorphic representation $\Pi'$ of $\GL_{r'}(\A)$ given by the isobaric sum $\Pi'=\pi_1'\otimes|\det|^{it_1'}\boxplus\cdots\boxplus\pi_{d'}\otimes|\det|^{it_{d'}'}$.  The Rankin--Selberg $L$-function associated to $\Pi\otimes\Pi'$ is
\[
L(s,\Pi\times\Pi')=\prod_{j=1}^d \prod_{j'=1}^{d'}L(s+it_j+it_{j'}',\pi_j\times\pi_{j'}'),
\]
and its analytic conductor is
\[
C(\Pi\times\Pi',t)=\prod_{j=1}^d \prod_{j'=1}^{d'}C(\pi_j\times\pi_{j'}',t+t_j+t_{j'}'),\qquad C(\Pi\times\Pi')=C(\Pi\times\Pi',0).
\]
	
\section{Zeros of $L$-functions}
\label{sec:zeros}

We require two results on the distribution of zeros of standard $L$-functions and Rankin--Selberg $L$-functions.  First, we require a standard zero-free region.  We present a modification to the work of Brumley \cite[Appendix]{Humphries} which will improve the degree dependence.  We will use this improved degree dependence to prove our second result, namely, the log-free zero density estimate of Soundararajan and the author \cite{ST} with improved degree dependence.

\subsection{Zero-free regions}

A proof of the following proposition is sketched in \cite[Lem 3.1]{RJLOT}.  We give a complete proof here.

\begin{proposition}
	\label{prop:GHL}
	Let $\Pi$ be an isobaric automorphic representation of $\GL_{r}(\A)$.  If $L(s,\Pi\times\tilde{\Pi})$ has a pole of order $r_{\Pi\times\tilde{\Pi}}\geq 1$ at $s=1$, then $L(1,\Pi\times\tilde{\Pi})\neq 0$, and there exists a constant $\Cl[abcon]{GHL}>0$ such that $L(s,\Pi\times\tilde{\Pi})$ has at most $r_{\Pi\times\tilde{\Pi}}$ real zeros in the interval
	\begin{equation}
	\label{eqn:claimed_GHL}
	s\geq 1-\frac{\Cr{GHL}}{(r_{\Pi\times\tilde{\Pi}}+1)\log C(\Pi\times\tilde{\Pi})}.
	\end{equation}
\end{proposition}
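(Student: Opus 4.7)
The plan is to run the classical de la Vallée Poussin positivity argument directly on $L(s, \Pi \times \tilde{\Pi})$. The key input is that $-\frac{L'}{L}(s, \Pi \times \tilde{\Pi})$ has nonnegative Dirichlet series coefficients: at an unramified prime $p$ one has
\[
\Lambda_{\Pi \times \tilde{\Pi}}(p^\ell) = \Big|\sum_{j=1}^{r} \alpha_{j, \Pi}(p)^\ell\Big|^2 \log p \geq 0,
\]
and the explicit local description recorded in \cite[Appendix]{ST} shows the same nonnegativity at the ramified primes. Consequently $-\re \frac{L'}{L}(s, \Pi \times \tilde{\Pi}) \geq 0$ for every real $s > 1$.

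First I would apply the Hadamard factorization to the completed $L$-function $\Lambda(s, \Pi \times \tilde{\Pi})$, take its logarithmic derivative, and evaluate at $s = 1 + \delta$ for small $\delta > 0$. The pole of order $r_{\Pi \times \tilde{\Pi}}$ at $s = 1$ contributes $r_{\Pi \times \tilde{\Pi}}/\delta$; the archimedean factor contributes $O(\log C(\Pi \times \tilde{\Pi}))$ by Stirling together with the conductor bound \eqref{eqn:BH}; and the Hadamard constant can be absorbed using the functional equation to pair $\rho$ with $1 - \bar{\rho}$. Combining with positivity yields an inequality of the shape
\[
\sum_{\rho} \re \frac{1}{1 + \delta - \rho} \leq \frac{r_{\Pi \times \tilde{\Pi}}}{\delta} + c_1 \log C(\Pi \times \tilde{\Pi}),
\]
where $\rho$ runs over the nontrivial zeros of $L(s, \Pi \times \tilde{\Pi})$ and $c_1 > 0$ is absolute. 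Since every summand is nonnegative and the fraction for a real zero $\beta_i\in[1-\eta,1]$ is at least $1/(\delta+\eta)$, having $N$ such zeros forces
\[
\frac{N}{\delta + \eta} \leq \frac{r_{\Pi \times \tilde{\Pi}}}{\delta} + c_1 \log C(\Pi \times \tilde{\Pi}).
\]

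Optimizing in $\delta$ (essentially $\delta \asymp \sqrt{r_{\Pi \times \tilde{\Pi}} \eta / (c_1 \log C(\Pi \times \tilde{\Pi}))}$) gives
\[
N \leq r_{\Pi \times \tilde{\Pi}} + 2\sqrt{c_1 \eta\, r_{\Pi \times \tilde{\Pi}} \log C(\Pi \times \tilde{\Pi})} + c_1 \eta \log C(\Pi \times \tilde{\Pi}).
\]
Choosing $\eta = \Cr{GHL}/((r_{\Pi \times \tilde{\Pi}} + 1)\log C(\Pi \times \tilde{\Pi}))$ with $\Cr{GHL}$ sufficiently small pushes the right-hand side strictly below $r_{\Pi \times \tilde{\Pi}} + 1$, so the integrality of $N$ forces $N \leq r_{\Pi \times \tilde{\Pi}}$. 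Letting $\eta \to 0$ (no real zero may accumulate at $s = 1$ beyond the pole) yields $L(1, \Pi \times \tilde{\Pi}) \neq 0$. The main obstacle is ensuring that no hidden dependence on the degree $r$ of $\Pi$ leaks into $c_1$: the bounds on the $\Gamma$-factor at infinity and on the Hadamard constant must be expressible purely in terms of $\log C(\Pi \times \tilde{\Pi})$. This is exactly the refinement of Brumley's appendix that the paper flags at the end of its outline, and it is handled by invoking \eqref{eqn:BH} so that all degree dependence is absorbed into $\log C(\Pi \times \tilde{\Pi})$ itself.
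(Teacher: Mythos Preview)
Your argument for the zero count is essentially the paper's: both exploit the nonnegativity of $\Lambda_{\Pi\times\tilde\Pi}(n)$ (including at ramified primes, via \cite[Appendix]{ST}) to drop $\re(L'/L)$ from the Hadamard/explicit-formula inequality, leaving $\sum_{\beta}\frac{1}{\sigma-\beta}\le \frac{r_{\Pi\times\tilde\Pi}}{\sigma-1}+O(\log C(\Pi\times\tilde\Pi))$. The only cosmetic difference is the endgame: you optimize $\delta$ via AM--GM to get $N\le r+2\sqrt{c_1\eta r\log C}+c_1\eta\log C$, whereas the paper simply plugs in $\sigma=1+2\Cr{GHL}/\log C(\Pi\times\tilde\Pi)$ and reads off $N<r+\tfrac{r}{2(r+1)}+O(\Cr{GHL})<r+1$. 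Both choices land on $N\le r_{\Pi\times\tilde\Pi}$ once $\Cr{GHL}$ is small.

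One point to tighten: your sentence ``letting $\eta\to 0$ \dots yields $L(1,\Pi\times\tilde\Pi)\neq 0$'' is not a valid deduction from the inequality you have. The bound $N\le r_{\Pi\times\tilde\Pi}$ for real zeros in $[1-\eta,1]$ says nothing new as $\eta\to 0$; if some off-diagonal factor of the isobaric product happened to vanish at $s=1$, your inequality would only bound the multiplicity of that zero by $r_{\Pi\times\tilde\Pi}$, not force it to be zero. The paper does not attempt to extract this from the positivity argument at all---it simply invokes \cite[Thm A.1]{Lapid} for nonvanishing on the line $\re(s)=1$. (Under the literal reading of the hypothesis ``pole of order $r_{\Pi\times\tilde\Pi}\ge 1$'', the conclusion $L(1)\neq 0$ is of course tautological; the substance is that the pole order really equals the count of diagonal factors with no cancellation, and that is Lapid's input.)
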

\begin{remark}
Our proof removes the extraneous factor of $d$ in the denominator in \cite[Lem 5.9]{IK} when $f=\Pi\times\tilde{\Pi}$.  While this may seem like a small improvement, the quality of our main theorems depends heavily on it.
\end{remark}
\begin{proof}
	By proceeding as in the proof of \cite[Equations 5.28 and 5.37]{IK}, we find that
	\[
	\sum_{\substack{1-\Cr{GHL}/\log C(\Pi\times\tilde{\Pi})<\beta\leq 1 \\ L(\beta,\Pi\times\tilde{\Pi})=0 }}\frac{1}{\sigma-\beta}<\frac{r_{\Pi\times\tilde{\Pi}}}{\sigma-1}+\re\Big(\frac{L'}{L}(\sigma,\Pi\times\tilde{\Pi})\Big)+O(\log C(\Pi\times\tilde{\Pi})),
	\]
	where $s=\sigma\geq 1$.  Define $\Lambda_{\Pi\times\tilde{\Pi}}(n)$ by the Dirichlet series identity
	\[
	\sum_{n=1}^{\infty}\frac{\Lambda_{\Pi\times\tilde{\Pi}}(n)}{n^s}=-\frac{L'}{L}(s,\Pi\times\tilde{\Pi}).
	\]
	The work in \cite[Section A.2]{ST} leading up to Equation A.9 shows that $\Lambda_{\Pi\times\tilde{\Pi}}(n)\geq0$ for all $n\geq 1$ (even if $n$ shares a prime factor with a conductor of one of the constituents of $\Pi$).  Thus by nonnegativity, we find that
	\begin{equation}
	\label{eqn:5.37}
	\sum_{\substack{1-\Cr{GHL}/\log C(\Pi\times\tilde{\Pi})<\beta\leq 1 \\ L(\beta,\Pi\times\tilde{\Pi})=0 }}\frac{1}{\sigma-\beta}<\frac{r_{\Pi\times\tilde{\Pi}}}{\sigma-1}+O(\log C(\Pi\times\tilde{\Pi})).
	\end{equation}
	Let $N$ be the number of real zeros in the sum on the left hand side of \eqref{eqn:5.37}.  We choose $\sigma=1+2\Cr{GHL}/\log C(\Pi\times\tilde{\Pi})$ and conclude that
	\[
	\frac{N\log C(\Pi\times\tilde{\Pi})}{2\Cr{GHL}+\frac{\Cr{GHL}}{r_{\Pi\times\tilde{\Pi}}+1}}<\Big(\frac{r_{\Pi\times\tilde{\Pi}}}{2\Cr{GHL}}+O(1)\Big)\log C(\Pi\times\tilde{\Pi}).
	\]
	This implies that $N<r_{\Pi\times\tilde{\Pi}}+\frac{r_{\Pi\times\tilde{\Pi}}}{2(r_{\Pi\times\tilde{\Pi}}+1)}+O(\Cr{GHL})$, so $N\leq r_{\Pi\times\tilde{\Pi}}$ when $\Cr{GHL}$ is small enough.
	
	The possibility that $L(1,\Pi\times\tilde{\Pi})=0$ is ruled out by \cite[Thm A.1]{Lapid}.
\end{proof}

\begin{corollary}
	\label{cor:ZFR}
	Let $\pi\in\mathfrak{F}_m$ and $\pi'\in\mathfrak{F}_{m'}$.  Suppose that both $\pi$ and $\pi'$ are self-dual (that is, $\pi=\tilde{\pi}$ and $\pi'=\tilde{\pi}'$).  There exists a constant $\Cl[abcon]{ZFR}>0$ such that the following results hold.
	\begin{enumerate}
		\item $L(s,\pi)\neq 0$ in the region
		\[
	\re(s)\geq 1-\frac{\Cr{ZFR}}{m\log(C(\pi)(3+|\im(s)|))}
	\]
	apart from at most one zero.  If the exceptional zero exists, then it is real and simple.
	\item $L(s,\pi\times\pi')\neq 0$ in the region
	\begin{equation}
	\label{eqn:claimed_ZFR_2}
	\re(s)\geq 1-\frac{\Cr{ZFR}}{(m+m')\log(C(\pi)C(\pi')(3+|\im(s)|)^{\min\{m,m'\}})}
	\end{equation}
	apart from at most one zero.  If the exceptional zero exists, then it is real and simple.
	\end{enumerate}
	\end{corollary}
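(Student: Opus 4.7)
The plan is to deduce both parts by applying \cref{prop:GHL} to judiciously chosen self-dual isobaric representations $\Pi$ whose Rankin--Selberg square $L(s,\Pi\times\tilde\Pi)$ has pole order strictly smaller than the order of the induced real zero at a putative $\beta_0$. For part (1), fix a candidate zero $s_0=\beta_0+i\gamma$ of $L(s,\pi)$. When $\gamma\neq 0$, I take the triple isobaric sum $\Pi=\mathbbm{1}\boxplus(\pi\otimes|\det|^{i\gamma})\boxplus(\pi\otimes|\det|^{-i\gamma})$, which is self-dual because $\pi=\tilde\pi$. Expanding $L(s,\Pi\times\tilde\Pi)$ as the nine-fold product $\prod_{j,k}L(s+i(t_j-t_k),\pi_j\times\tilde\pi_k)$ yields
\[
L(s,\Pi\times\tilde\Pi)=\zeta(s)\,L(s,\pi\times\pi)^2\,L(s-i\gamma,\pi)^2 L(s+i\gamma,\pi)^2\,L(s-2i\gamma,\pi\times\pi)L(s+2i\gamma,\pi\times\pi),
\]
which has a pole of order $r=3$ at $s=1$ (simple from $\zeta$ and from each copy of the self-dual $L(s,\pi\times\pi)$, with no contribution from the shifted factors). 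Because $\pi$ is self-dual, $a_\pi(n)\in\R$, so $L(\beta_0-i\gamma,\pi)=\overline{L(\beta_0+i\gamma,\pi)}=0$ whenever the latter vanishes; each squared factor $L(s\pm i\gamma,\pi)^2$ then vanishes at $s=\beta_0$ to order $\geq 2$, producing a real zero of $L(s,\Pi\times\tilde\Pi)$ of order $\geq 4>r$, contradicting \cref{prop:GHL}. The residual case $\gamma=0$ is handled by the double construction $\Pi=\mathbbm{1}\boxplus\pi$, for which $L(s,\Pi\times\tilde\Pi)=\zeta(s)L(s,\pi)^2 L(s,\pi\times\pi)$ has pole order $r=2$ and doubles every real zero of $L(s,\pi)$; \cref{prop:GHL} then permits at most one such exceptional zero, necessarily simple.

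Part (2) runs in parallel. For $\gamma=\im(s_0)\neq 0$ I would use $\Pi=\pi\boxplus(\pi'\otimes|\det|^{i\gamma})\boxplus(\pi'\otimes|\det|^{-i\gamma})$ (swapping the roles of $\pi$ and $\pi'$ if $m'>m$), again self-dual. The analogous nine-fold expansion produces a pole at $s=1$ of order $r=3$ (one from $L(s,\pi\times\pi)$ and two from the two copies of $L(s,\pi'\times\pi')$; no contributions from $L(s\pm i\gamma,\pi\times\pi')$ or $L(s\pm 2i\gamma,\pi'\times\pi')$ since $\pi\not\sim\pi'$ and $\gamma\neq 0$). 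Reality of the coefficients of $L(s,\pi\times\pi')$ converts the putative zero and its conjugate into a real zero of $L(s,\Pi\times\tilde\Pi)$ of order $\geq 4$ at $\beta_0$, again contradicting \cref{prop:GHL}. The real case $\gamma=0$ is handled with $\Pi=\pi\boxplus\pi'$: the resulting $L(s,\pi\times\pi)L(s,\pi\times\pi')^2 L(s,\pi'\times\pi')$ has $r=2$ and doubles every real zero of $L(s,\pi\times\pi')$, so at most one exception is allowed.

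The main obstacle is translating the abstract region $\re(s)\geq 1-c/\bigl((r+1)\log C(\Pi\times\tilde\Pi)\bigr)$ of \cref{prop:GHL} into the sharp shape of \eqref{eqn:claimed_ZFR_2}. Multiplicativity of the analytic conductor on isobaric sums reduces the task to bounding each factor $C(\pi_j\times\tilde\pi_k,t_j-t_k)$ through the Bushnell--Henniart--Brumley estimate \eqref{eqn:BH}; applied carelessly, this assigns each shifted self-tensor $L(s\pm 2i\gamma,\pi\times\pi)$ (resp.\ $L(s\pm 2i\gamma,\pi'\times\pi')$) an archimedean inflation of size $(1+|\gamma|)^{m^2}$, which would degrade the denominator to order $m^2\log(3+|\gamma|)$ and destroy the claimed degree dependence. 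Avoiding this loss requires the refinement to Brumley's conductor analysis described in \cite[Appendix]{Humphries} and announced earlier in the paper; combined with the specific structure of the isobaric sums chosen above, it yields $\log C(\Pi\times\tilde\Pi)\ll m\log(C(\pi)(3+|\gamma|))$ in part (1) and $(m+m')\log(C(\pi)C(\pi')(3+|\gamma|)^{\min\{m,m'\}})$ in part (2), which inserted into \cref{prop:GHL} deliver the stated zero-free regions.
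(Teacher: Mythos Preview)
Your Part~(2) construction for $\gamma\neq 0$ is exactly the paper's: take $\Pi=\pi\boxplus(\pi'\otimes|\det|^{i\gamma})\boxplus(\pi'\otimes|\det|^{-i\gamma})$ with $m'\leq m$. For this $\Pi$ the shifted self-tensors are $L(s\pm 2i\gamma,\pi'\times\pi')$, whose archimedean inflation via \eqref{eqn:BH} is $(1+|\gamma|)^{(m')^2}$; since $(m')^2\leq m'(m+m')$, this already sits inside $(m+m')\log\bigl((3+|\gamma|)^{m'}\bigr)$, so the ``obstacle'' you flag is not actually an obstacle in Part~(2)---a direct application of \eqref{eqn:BH} delivers the shape of \eqref{eqn:claimed_ZFR_2} with no further refinement needed. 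Your $\gamma=0$ arguments (using $\Pi=\mathbbm{1}\boxplus\pi$ and $\Pi=\pi\boxplus\pi'$) differ from the paper's but are perfectly valid and arguably cleaner.

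The genuine gap is in your Part~(1) construction for $\gamma\neq 0$. By twisting $\pi$ rather than $\mathbbm{1}$, you force the factors $L(s\pm 2i\gamma,\pi\times\pi)$ into $L(s,\Pi\times\tilde\Pi)$. These carry $m^2$ archimedean gamma factors, each shifted by $2\gamma$, so $\log C(\Pi\times\tilde\Pi)$ genuinely contains a term of size $m^2\log(3+|\gamma|)$; this is not an artifact of applying \eqref{eqn:BH} carelessly---the exponent $m^2$ in the $t$-dependence of $C(\pi\times\pi,t)$ is sharp. No refinement of Brumley's conductor analysis converts $m^2$ Langlands parameters into $m$. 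With your $\Pi$ you therefore obtain only a region of width $\asymp 1/\bigl(m^2\log(C(\pi)(3+|\gamma|))\bigr)$, missing the claimed linear dependence on $m$. The fix is the one you already employ in Part~(2): twist the \emph{smaller} representation. Taking $\pi'=\mathbbm{1}$ in your Part~(2) construction gives $\Pi=\pi\boxplus|\det|^{i\gamma}\boxplus|\det|^{-i\gamma}$, the shifted self-tensors become $\zeta(s\pm 2i\gamma)$ with inflation only $(1+|\gamma|)^1$, and \eqref{eqn:BH} immediately yields $\log C(\Pi\times\tilde\Pi)\ll m\log(C(\pi)(3+|\gamma|))$. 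This is precisely how the paper proceeds: it proves Part~(2) and then remarks that Part~(1) is the special case $\pi'=\mathbbm{1}$.
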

	\begin{remark}
	This improves the denominator $(m+m')^3\log(C(\pi)C(\pi')(3+|\im(s)|)^{m})$ in \cite[Thm A.1]{Humphries} when $F=\Q$ and both $\pi$ and $\pi'$ are self-dual.  A similar improvement holds when $\pi$ and $\pi'$ are defined over number fields.
	\end{remark}

	\begin{proof}
	We prove the second part; the proof of the first part is the same once we choose $\pi'=\mathbbm{1}$.  Without loss, suppose that $m'\leq m$.  First, let $\gamma\neq 0$, and suppose to the contrary that $\rho=\beta+i\gamma$ is a zero in the region \eqref{eqn:claimed_ZFR_2}.  We apply \cref{prop:GHL} to the choice of $\Pi=\pi'\otimes|\det|^{i\gamma}\boxplus\tilde{\pi}'\otimes|\det|^{-i\gamma}\boxplus\pi$, in which case (since $\pi$ and $\pi'$ are self-dual)
	\begin{align*}
L(s,\Pi\times\tilde{\Pi})&=L(s,\pi\times\tilde{\pi})L(s,\pi'\times\tilde{\pi}')^2L(s+it,\pi\times\pi')^2 L(s-it,\pi\times\pi')^2\\
&\times L(s+2i\gamma,\pi'\times\tilde{\pi}')
L(s-2i\gamma,\pi'\times\tilde{\pi}').
\end{align*}
  Since $\pi$ and $\pi'$ are self-dual, it follows that $\rho$ is a zero of $L(s,\pi\times\pi')$ if and only if $\bar{\rho}$ is.  Thus if $\rho$ is a zero of $L(s,\pi)$, then $L(s,\Pi\times\tilde{\Pi})$ has a real zero at $s=\beta$ of order 4 in the region \eqref{eqn:claimed_GHL}.  This contradicts \cref{prop:GHL} since $r_{\Pi\times\tilde{\Pi}}=3$ when $\gamma\neq 0$.  The desired result follows from the bound $\log C(\Pi\times\tilde{\Pi})\ll (m+m')\log(C(\pi)C(\pi')(3+|\gamma|)^{m'})$, which holds via \eqref{eqn:BH}.

Second, suppose that $\gamma=0$.  The same arguments as when $\gamma\neq 0$ hold, except that now $r_{\Pi\times\tilde{\Pi}}=5$.  Since the presence of a single zero of $L(s,\pi\times\pi')$ in the claimed region contributes a zero of order 4 to $L(s,\Pi\times\tilde{\Pi})$, we must conclude that if a real zero of $L(s,\pi\times\pi')$ exists in the region \eqref{eqn:claimed_ZFR_2}, then such a zero must be simple.
\end{proof}

We also record a bound on the exceptional zero in Part (2) of \cref{cor:ZFR} (if it exists).

\begin{lemma}
\label{lem:Siegel}
	There exists a constant $\Cl[abcon]{Siegel_1}>0$ such that the exceptional zero in Part (2) of \cref{cor:ZFR} when $\pi\neq\pi'$ is bounded by $1-\Cr{Siegel_1}(C(\pi)C(\pi'))^{-m-m'}$.
\end{lemma}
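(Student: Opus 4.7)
My plan is to carry out an effective Hoffstein--Lockhart/Molteni-style Siegel argument (as suggested by the remark preceding the lemma, which points to \cite{Molteni}) applied to the auxiliary self-dual isobaric representation
\[
\Pi \;:=\; \pi \boxplus \pi'
\]
of $\GL_{m+m'}(\A)$. Expanding the Rankin--Selberg factorization of $L(s,\Pi\times\tilde{\Pi})$ and using self-duality of $\pi$ and $\pi'$ yields
\[
L(s,\Pi\times\tilde{\Pi}) \;=\; L(s,\pi\times\pi)\, L(s,\pi'\times\pi')\, L(s,\pi\times\pi')^{2}.
\]
This $L$-function has non-negative Dirichlet coefficients (as used in the proof of \cref{prop:GHL}), a pole of order exactly $r_{\Pi\times\tilde{\Pi}} = 2$ at $s=1$ (because $\pi \neq \pi'$), and---if the exceptional zero $\beta_1$ from \cref{cor:ZFR}(2) exists---a zero of order at least $2$ at $s=\beta_1$ coming from the factor $L(s,\pi\times\pi')^{2}$.

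To extract a lower bound on $1-\beta_1$, I would set $g(s) := (s-1)^{2} L(s,\Pi\times\tilde{\Pi})$, which is analytic near $s=1$. The double zero at $\beta_1$ gives $g(\beta_1) = 0$, while the double pole gives $g(1) = R > 0$, where
\[
R \;=\; \bigl(\Res_{s=1}L(s,\pi\times\pi)\bigr)\bigl(\Res_{s=1}L(s,\pi'\times\pi')\bigr) L(1,\pi\times\pi')^{2}
\]
is the leading Laurent coefficient, positive by Jacquet--Shalika. A Taylor expansion and the mean value theorem then give
\[
R \;\leq\; (1-\beta_1)\sup_{|s-1|\leq 1/2}|g'(s)|.
\]
Bounding $R$ from below via Molteni's effective residue bounds gives $R \gg (C(\pi)C(\pi'))^{-O(m+m')}$, while the convexity estimate together with \eqref{eqn:BH} bounds $\sup|g'(s)|$ by $(C(\pi)C(\pi'))^{O(m+m')}$ (since by \eqref{eqn:BH} we have $C(\Pi\times\tilde{\Pi}) \leq e^{O((m+m')^{2})}(C(\pi)C(\pi'))^{2(m+m')}$). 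Rearranging and absorbing subdominant factors into $\Cr{Siegel_1}$ produces the claimed bound $1-\beta_1 \gg (C(\pi)C(\pi'))^{-(m+m')}$.

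The main obstacle is obtaining the sharp exponent $m+m'$ rather than a weaker one such as $2(m+m')$. This requires the improved residue count of \cref{prop:GHL} (which eliminates an extraneous factor of $r_{\Pi\times\tilde{\Pi}}$ relative to \cite[Lem 5.9]{IK}), careful optimization of the degree dependence in each ingredient (Molteni's residue bounds, the convexity estimate, and the conductor bound \eqref{eqn:BH}), and a non-circular lower bound on $L(1,\pi\times\pi')$ appearing in the expression for $R$. The last point is the crux: directly bounding $L(1,\pi\times\pi')$ from below \emph{is} essentially a Siegel-type estimate, and must therefore be handled by applying the same positivity machinery to the larger self-dual isobaric representation $\mathbbm{1}\boxplus\pi\boxplus\pi'$, whose Rankin--Selberg square has a triple pole at $s=1$ and accommodates the required double zero at $\beta_1$ with room to spare.
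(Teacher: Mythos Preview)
The paper's own proof is a one-line citation of \cite[Thm~A.1]{Lapid}; it does not reconstruct the argument. Your proposal is therefore not an alternative to the paper's proof but an attempt to supply what Lapid proves, and your use of the auxiliary isobaric sum $\Pi=\pi\boxplus\pi'$ together with the factorization of $L(s,\Pi\times\tilde\Pi)$ is the right starting point.

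The gap lies in how you resolve the circularity you correctly flag. Enlarging to $\mathbbm{1}\boxplus\pi\boxplus\pi'$ does not help: the leading Laurent coefficient of the Rankin--Selberg square of that larger representation at $s=1$ still contains the factor $L(1,\pi\times\pi')^{2}$ (and now additionally $L(1,\pi)^{2}L(1,\pi')^{2}$), so nothing is gained; and the observation that the triple pole ``has room to spare'' for a double zero produces no contradiction with \cref{prop:GHL} and hence no bound on $\beta_1$. The non-circular route---which is what Lapid actually does---is to lower-bound $R=g(1)$ \emph{directly} from positivity of the Dirichlet coefficients of $D(s)=L(s,\Pi\times\tilde\Pi)$: the term $a_1=1$ forces a smoothed partial sum $\sum_n a_n\phi(n/X)\geq 1$, while Mellin inversion expresses that sum as the polar contribution at $s=1$ (with leading coefficient $R$) plus an error of size $C(\Pi\times\tilde\Pi)^{O(1)}X^{-\delta}$. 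Choosing $X$ an appropriate power of $C(\Pi\times\tilde\Pi)$ yields $R\gg C(\Pi\times\tilde\Pi)^{-O(1)}$ with no appeal to lower bounds on the individual factors of $R$; your mean-value inequality then finishes the argument. In particular, Molteni's inputs here are \emph{upper} bounds---used to control the polar expansion and the convexity error---not lower bounds on $R$.
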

\begin{proof}
	This is an immediate consequence of \cite[Thm A.1]{Lapid}.
\end{proof}

\subsection{Log-free zero density estimates}

Our log-free zero density estimates are as follows.

\begin{proposition}
\label{prop:LFZDE}
	Let $\pi\in\mathfrak{F}_m$ and $\pi'\in\mathfrak{F}_{m'}$.  There exists a constant $\Cl[abcon]{LFZDE}>2$ such that the following are true.
	\begin{enumerate}
		\item Suppose that $\pi$ satisfies GRC at all primes $p\nmid q_{\pi}$.  If $0\leq\sigma\leq 1$ and $T\geq 1$, then
		\[
		N_{\pi}(\sigma,T):=\#\{\rho=\beta+i\gamma\colon \beta\geq\sigma,~|\gamma|\leq T,~L(\rho,\pi)=0\}\ll m^2(C(\pi)T)^{\Cr{LFZDE}m(1-\sigma)}.
		\]
		\item If $0\leq\sigma\leq 1$, $T\geq 1$, $m,m'\leq M$, and $\pi$ and $\pi'$ satisfy GRC at all $p\nmid q_{\pi}q_{\pi'}$,  then
		{\small\[
		N_{\pi\times\pi'}(\sigma,T):=\#\{\rho=\beta+i\gamma\colon \beta\geq\sigma,~|\gamma|\leq T,~L(\rho,\pi\times\pi')=0\}\ll M^4 ((C(\pi)C(\pi'))^{M} T)^{\Cr{LFZDE}M^2(1-\sigma)}.
		\]}
	\end{enumerate}
\end{proposition}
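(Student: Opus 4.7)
The plan is to adapt the mollified second-moment argument of Soundararajan and the author \cite{ST}, tightening each estimate so that the degree $m$ (respectively $M$) enters the final bound with the stated polynomial dependence. Throughout I assume GRC at unramified primes, so the Dirichlet coefficients $\mu_\pi(n)$ of $L(s,\pi)^{-1}$ are bounded by $d_m(n)$ up to trivial contributions from $p\mid q_\pi$.

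For part (1), fix a smooth weight $\phi$ with compact support in $(0,1]$, choose $X=(C(\pi)T)^{Am}$ and $Y=X^{\kappa}$ for suitable absolute constants $A,\kappa>0$, and form the mollifier
\[
M_X(s,\pi)=\sum_n \mu_\pi(n)\,\phi(n/X)\,n^{-s}.
\]
For each non-exceptional zero $\rho=\beta+i\gamma$ with $\beta\geq\sigma$ and $|\gamma|\leq T$, I would establish a detection lower bound $|M_X(\rho,\pi)|\gg 1$ by shifting the contour in
\[
\frac{1}{2\pi i}\int_{(2)} L(\rho+w,\pi)\,M_X(\rho+w,\pi)\,\Gamma(w)\,Y^w\,dw
\]
to the left of $\re(w)=0$. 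The residue at $w=0$ vanishes because $L(\rho,\pi)=0$, and the shifted contour is shown to be $o(1)$ by combining the zero-free region from \cref{cor:ZFR} with a convexity bound for $L(s,\pi)$, provided $A$ is taken large enough. The at-most-one exceptional real zero is handled separately and contributes harmlessly.

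Given the detection inequality, I would apply the Montgomery--Huxley / Gallagher mean value theorem for Dirichlet polynomials at well-spaced zeros to obtain
\[
N_\pi(\sigma,T)\ll \sum_{\rho} |M_X(\rho,\pi)|^2 \ll (X+T)\sum_{n\leq X}\frac{|\mu_\pi(n)|^2\phi(n/X)^2}{n^{2\sigma}}.
\]
Inserting $|\mu_\pi(n)|\leq d_m(n)$ together with $\sum_{n\leq X}d_m(n)^2\ll X(\log eX)^{m^2-1}$, the polylogarithmic factor is absorbed into $X^{2(1-\sigma)}$ (by enlarging $A$), yielding part (1) with a suitable $\Cr{LFZDE}$. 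For part (2), I run the same argument on $L(s,\pi\times\pi')$: by \eqref{eqn:BH} one has $\log C(\pi\times\pi')\ll M\log(C(\pi)C(\pi'))$, while GRC for $\pi$ and $\pi'$ gives $|\mu_{\pi\times\pi'}(n)|\leq d_{mm'}(n)\leq d_{M^2}(n)$. Taking $X=((C(\pi)C(\pi'))^{M} T)^{AM^2}$ and repeating the detector + mean value + divisor sum produces the $M^4$ prefactor and the exponent $\Cr{LFZDE}M^2(1-\sigma)$.

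The main obstacle is the contour-shift step in the detection argument: the size of the shifted integral depends directly on the zero-free region for $L(s,\pi\times\tilde\pi)$ (or its Rankin--Selberg analog for part (2)), and the refined denominator in \cref{cor:ZFR}---linear in $m$ rather than cubic as in \cite[Thm A.1]{Humphries}---is precisely what permits the exponent to scale as $m(1-\sigma)$ instead of $m^3(1-\sigma)$. A secondary technical point is the handling of ramified primes: because GRC is only assumed for $p\nmid q_\pi$ (resp.\ $p\nmid q_\pi q_{\pi'}$), the ramified local factors must be bounded trivially and absorbed into the conductor, following the Appendix of \cite{ST}.
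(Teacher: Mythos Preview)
Your route is genuinely different from the paper's. The paper does not use a mollifier at all; it follows the Tur\'an power-sum strategy of \cite{ST}, working with high derivatives $\frac{(-1)^k}{k!}\bigl(\frac{L'}{L}\bigr)^{(k)}(1+\eta+i\tau)$. A zero near $1+i\tau$ is detected by the lower bound $\bigl|\sum_{|s-\rho|\leq 200\eta}(s-\rho)^{-(k+1)}\bigr|\geq(100\eta)^{-(k+1)}$ (\cref{lem:04}), and the same derivative is bounded above by short sums of $|\Lambda_\pi(n)|$ (\cref{lem:05}). The decisive feature is that GRC enters through $|\Lambda_\pi(p^\ell)|\leq m\log p$ at $p\nmid q_\pi$, a bound \emph{linear} in $m$; this is what makes $K\asymp m(1-\sigma)\log(C(\pi)T)$ and yields the prefactor $m^2$ after $105^{4K}$ and the absorption $e^{O(m)}\leq C(\pi)^{O(1)}$.

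Your mollifier argument, by contrast, feeds GRC in through $|\mu_\pi(n)|\leq d_m(n)$, and the mean-value step then confronts $\sum_{n\leq X}d_m(n)^2 n^{-2\sigma}$. This carries a factor of size $(\log X)^{m^2-1}$ (or $\zeta(2\sigma)^{m^2}$ near $\sigma=1$), and the sentence ``the polylogarithmic factor is absorbed into $X^{2(1-\sigma)}$ by enlarging $A$'' does not hold uniformly in $m$: with $X=(C(\pi)T)^{Am}$ one has $(\log X)^{m^2-1}\asymp(Am\log(C(\pi)T))^{m^2-1}$, and in the nontrivial range $1-\sigma\leq 1/(400m)$ the target $(C(\pi)T)^{O(m)(1-\sigma)}$ is bounded while the polylog grows without bound in $m$. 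As written, the output would be $e^{O(m^2)}$ or $(\log(C(\pi)T))^{m^2}$ in place of the claimed $m^2$. A smaller point: in your detector, the vanishing residue at $w=0$ shows the \emph{integral} is small, forcing the smoothed tail of $L\cdot M_X-1$ (not $M_X$ itself) to be $\gg 1$ at $\rho$; the Dirichlet polynomial you should be inserting into the mean value is that tail, whose coefficients are again bounded by divisor functions and suffer from the same $m^2$-in-the-exponent issue.
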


For the sake of brevity, we will only prove Part (1).  There are no structural differences in the proof of Part (2) except that $m$ is replaced by $m'm\leq M^2$ and $C(\pi)$ is replaced by $C(\pi)^{O(m')}C(\pi')^{O(m)}$ (which is an upper bound for $C(\pi\times\pi')$).  Our proof runs parallel to that of \cite[Thm 1.2]{ST}, so we only point out the key differences.  We begin with some adjustments to the lemmas in \cite{ST} which will improve the degree dependence.

\begin{lemma}
	\label{lem:02}
	If $t\in\R$ and $0<\eta\leq 1$, then
	\[
	\sum_{\rho}\frac{1+\eta-\beta}{|1+\eta+it-\rho|^2}\leq 2m\log C(\pi)+m\log(2+|t|)+\frac{2}{\eta}+O(m^2)
	\]
	so that $\#\{\rho\colon |\rho-(1+it)|\leq\eta\}\leq 10m\eta\log C(\pi)+5m\eta\log(2+|t|)+O(m^2\eta+1)$.
\end{lemma}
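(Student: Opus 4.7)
The plan is to follow the standard partial-fraction strategy applied to the completed $L$-function, with careful bookkeeping of the degree $m$ and the analytic conductor. I would start from the Hadamard factorization: since $\Lambda(s,\pi) = (s(s-1))^{r_\pi} q_\pi^{s/2}L(s,\pi)L(s,\pi_\infty)$ is entire of order $1$, there exist constants $A(\pi), B(\pi)$ such that
\[
\frac{\Lambda'}{\Lambda}(s,\pi) = B(\pi) + \sum_\rho\Bigl(\frac{1}{s-\rho}+\frac{1}{\rho}\Bigr).
\]
A standard consequence of the functional equation $\Lambda(s,\pi) = W(\pi)\Lambda(1-s,\tilde\pi)$ combined with the Hadamard product (cf.\ Iwaniec--Kowalski, eq.\ (5.29)) is that $\re B(\pi) = -\re\sum_\rho 1/\rho$. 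Taking real parts at $s = 1+\eta+it$ and invoking this cancellation yields the key identity
\[
\sum_\rho \frac{1+\eta-\beta}{|1+\eta+it-\rho|^2} = \re\frac{\Lambda'}{\Lambda}(1+\eta+it,\pi).
\]

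Next, I would expand the right-hand side using the definition of $\Lambda(s,\pi)$:
\[
\re\frac{\Lambda'}{\Lambda}(1+\eta+it,\pi) = \re\!\Bigl[\frac{r_\pi}{1+\eta+it}+\frac{r_\pi}{\eta+it}\Bigr] + \tfrac{1}{2}\log q_\pi + \re\frac{L'}{L}(1+\eta+it,\pi) + \re\frac{L'(1+\eta+it,\pi_\infty)}{L(1+\eta+it,\pi_\infty)}.
\]
The pole terms contribute $O(1)+r_\pi/\eta \leq 1/\eta+O(1)$ (using $r_\pi\in\{0,1\}$). The conductor term contributes $\tfrac{1}{2}\log q_\pi \leq \tfrac{1}{2}\log C(\pi)$. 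For the Dirichlet series piece, the triangle inequality together with \eqref{eqn:RS2} gives
\[
\Bigl|\re\frac{L'}{L}(1+\eta+it,\pi)\Bigr|\le \sum_{n}\frac{|\Lambda_\pi(n)|}{n^{1+\eta}} \leq \frac{1}{\eta} + m\log C(\pi) + O(m^2).
\]
Finally, for the archimedean factor I would use Stirling: since the Langlands parameters satisfy $\re\mu_\pi(j)\ge -\theta_m \ge -\tfrac{1}{2}$ by \eqref{eqn:LRS}, each $\tfrac{1}{2}\psi\bigl((1+\eta+it+\mu_\pi(j))/2\bigr)$ is $\tfrac{1}{2}\log\bigl|1+\eta+it+\mu_\pi(j)\bigr|+O(1)$, so summing over the $m$ parameters and bounding $\log(A+|t|+|\mu_\pi(j)|) \le \log(2+|t|)+\log(1+|\mu_\pi(j)|)+O(1)$ produces at most $\tfrac{m}{2}\log(2+|t|)+\tfrac{1}{2}\log(C(\pi)/q_\pi)+O(m)$. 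Combining the four bounds and absorbing constants into the $O(m^2)$ error yields the claimed inequality.

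For the counting statement, I would simply observe that if $|\rho-(1+it)|\le\eta$, then $|1+\eta+it-\rho|\le 2\eta$ and $1+\eta-\beta\geq\eta$, so each such zero contributes at least $\eta/(2\eta)^2 = 1/(4\eta)$ to the sum on the left-hand side. Multiplying the displayed bound by $4\eta$ and adjusting constants gives the stated count.

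The main technical obstacle, and the reason this lemma matters for the rest of the paper, is keeping the $\log C(\pi)$ coefficient \emph{linear} in $m$ rather than quadratic. This is exactly where \eqref{eqn:RS2} (from \cite{ST}) is essential: a cruder bound on $|L'/L|$ via $\re(s) > 1$ absolute convergence would give $\log L(1+\eta,\pi\times\tilde\pi)$, which only allows an $m^2\log C(\pi)$ bound. The refined estimate of \eqref{eqn:RS2}, together with the cancellation of $B(\pi)$ against $\sum 1/\rho$, is precisely what produces the improved degree dependence that feeds into the sharper zero-density estimate of \cref{prop:LFZDE} and ultimately into \cref{thm:main_theorem}.
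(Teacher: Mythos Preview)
Your proposal is correct and follows essentially the same approach as the paper. The paper's proof simply refers to \cite[Lem 3.1]{ST} with two modifications---replacing \cite[Equation 1.9]{ST} by \eqref{eqn:RS2} and using $r_\pi\le 1$---and your write-up is precisely an unwinding of that argument: Hadamard factorization with the $\re B(\pi)$ cancellation, the bound \eqref{eqn:RS2} for the Dirichlet-series piece, Stirling for the archimedean factor, and the $1/(4\eta)$ lower bound per nearby zero to deduce the count.
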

\begin{proof}
	The proof proceeds just as in \cite[Lem 3.1]{ST}, but we use \eqref{eqn:RS2} instead of \cite[Equation 1.9]{ST}, and we use the fact that $r_{\pi}\leq 1$ in our case.
\end{proof}

\begin{lemma}
	\label{lem:03}
	Let $T\geq 1$, and let $\tau\in\R$ satisfy $200\eta\leq |\tau|\leq T$.  If $\Cr{ZFR}/(m\log(C(\pi)T))\leq\eta\leq (200m)^{-1}$ and $s=1+\eta+i\tau$, then
	\[
	\frac{(-1)^k}{k!}\Big(\frac{L'}{L}(s,\pi)\Big)^{(k)}=\sum_{\substack{\rho \\ |s-\rho|\leq 200\eta}}\frac{1}{(s-\rho)^{k+1}}+O\Big(\frac{m\log(C(\pi)T)}{(200\eta)^k}\Big).
	\]
\end{lemma}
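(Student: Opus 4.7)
The strategy is to combine the Hadamard product for the completed $L$-function with the near-line zero count provided by \cref{lem:02}. Since $\Lambda(s,\pi)=(s(s-1))^{r_\pi}q_\pi^{s/2}L(s,\pi)L(s,\pi_\infty)$ is entire of order one, taking its logarithmic derivative yields an identity of the form
\[
\frac{L'}{L}(s,\pi)=B(\pi)-\frac{r_\pi}{s}-\frac{r_\pi}{s-1}-\tfrac{1}{2}\log q_\pi-\frac{L'}{L}(s,\pi_\infty)+\sum_\rho\Big(\frac{1}{s-\rho}+\frac{1}{\rho}\Big)
\]
for some constant $B(\pi)$. Differentiating $k\geq 1$ times kills the constants and the $1/\rho$ contributions, so that after multiplication by $(-1)^k/k!$ one obtains
\[
\frac{(-1)^k}{k!}\Big(\frac{L'}{L}(s,\pi)\Big)^{(k)}=\sum_\rho\frac{1}{(s-\rho)^{k+1}}-\frac{r_\pi}{s^{k+1}}-\frac{r_\pi}{(s-1)^{k+1}}-\frac{(-1)^k}{k!}\Big(\frac{L'}{L}(s,\pi_\infty)\Big)^{(k)}.
\]
The plan is to split the zero sum at $|s-\rho|=200\eta$ and to show that the far zeros together with the three explicit terms all fit inside $O(m\log(C(\pi)T)/(200\eta)^k)$.

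The two polar terms are immediate: the hypothesis $|\tau|\geq 200\eta$ forces $|s|,|s-1|\geq 200\eta$, so their contribution is at most $2r_\pi/(200\eta)^{k+1}$, which is $\ll m\log(C(\pi)T)/(200\eta)^k$ thanks to the lower bound $\eta\geq \Cr{ZFR}/(m\log(C(\pi)T))$. For the archimedean derivative I would invoke the digamma series $\psi(z)=-\gamma+\sum_{n\geq 0}(\frac{1}{n+1}-\frac{1}{n+z})$ to rewrite
\[
\frac{(-1)^k}{k!}\Big(\frac{L'}{L}(s,\pi_\infty)\Big)^{(k)}=-\sum_{j=1}^m\sum_{n\geq 0}\frac{1}{(2n+s+\mu_\pi(j))^{k+1}}.
\]
The bound \eqref{eqn:LRS} gives $\re(\mu_\pi(j))\geq-\theta_m\geq-1/2$, so $|2n+s+\mu_\pi(j)|\geq 2n+1/2$, and the double sum is $O(m)$, again comfortably absorbed.

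The heart of the argument is the far-zero tail. Here I would bound $|s-\rho|^{-(k+1)}\leq(200\eta)^{-(k-1)}|s-\rho|^{-2}$ and appeal to \cref{lem:02}. Using $1+\eta-\beta\geq\eta$ whenever $\beta\leq 1$, that lemma yields
\[
\sum_\rho\frac{1}{|s-\rho|^2}\leq\frac{1}{\eta}\sum_\rho\frac{1+\eta-\beta}{|s-\rho|^2}\ll\frac{m\log(C(\pi)T)}{\eta}+\frac{1}{\eta^2}+\frac{m^2}{\eta}.
\]
The lower bound on $\eta$ collapses $1/\eta^2$ into $m\log(C(\pi)T)/\eta$, and the upper bound $\eta\leq(200m)^{-1}$ gives $m^2/\eta\leq m/(200\eta^2)$, absorbed likewise. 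The far-zero sum is therefore $O(m\log(C(\pi)T)/(200\eta)^k)$, matching the claimed error.

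The main obstacle I anticipate is preserving the linear dependence on $m$ throughout. A careless treatment of the archimedean derivative via Stirling would introduce factors of $\log(2+|\tau|+|\mu_\pi(j)|)$ or $m^2$, and a careless treatment of the $m^2/\eta$ term from \cref{lem:02} would force a stronger hypothesis on $\eta$ than what is available. Both issues dissolve under the explicit digamma expansion and the precise range $\Cr{ZFR}/(m\log(C(\pi)T))\leq\eta\leq(200m)^{-1}$ stated in the lemma; the rest of the proof is bookkeeping.
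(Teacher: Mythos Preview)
Your approach is the standard one and is exactly what the paper intends (it merely refers to \cite[Eq.~4.1]{ST}, replacing their Lemma~3.1 by \cref{lem:02} and allowing $\eta$ down to the edge of the zero-free region).  Two details, however, do not hold as written.

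\emph{Archimedean term.}  The assertion that $\sum_{j}\sum_{n\ge 0}|2n+s+\mu_\pi(j)|^{-(k+1)}=O(m)$ is false: the $n=0$ summand has modulus only $\ge 1-\theta_m+\eta>\tfrac12$, so it alone contributes up to $2^{k+1}$ per $j$, giving $O(m\,2^{k})$ rather than $O(m)$.  The fix is to reuse the very device you apply to the far zeros: since $|2n+s+\mu_\pi(j)|>200\eta$ in your range of $\eta$, bound each summand by $(200\eta)^{-(k-1)}|2n+s+\mu_\pi(j)|^{-2}$ and observe $\sum_{j,n}|2n+s+\mu_\pi(j)|^{-2}=O(m)$.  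This yields $O(m/(200\eta)^{k-1})$, which is absorbed because $200\eta\le 1\ll\log(C(\pi)T)$.

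\emph{The $m^2/\eta$ term.}  Your chain $m^2/\eta\le m/(200\eta^2)$ followed by $1/\eta\ll m\log(C(\pi)T)$ actually produces $m^2\log(C(\pi)T)/\eta$, not $m\log(C(\pi)T)/\eta$.  An honest accounting therefore leaves an extra $O(m^2/(200\eta)^k)$ in the error; folding it into $O(m\log(C(\pi)T)/(200\eta)^k)$ would require $m\ll\log(C(\pi)T)$, which is not among the hypotheses.  This surplus is harmless downstream---the paper already carries an $O(m^2\eta+1)$ correction in the hypothesis on $K$ in \cref{lem:04} and in the proof of \cref{prop:LFZDE}---but you should either record the extra term or note the implicit inequality.
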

\begin{proof}
	The proof is the same as that of \cite[Equation 4.1]{ST} with three changes.  First, we have that $r_{\pi}\leq 1$.  Second, we widen the range of $\eta$ all the way to the edge of the zero-free region in \cref{cor:ZFR}.  Finally, we use \cref{lem:02} instead of \cite[Lem 3.1]{ST}.
\end{proof}

\begin{lemma}
\label{lem:04}
	Let $T\geq 1$, $200\eta\leq |\tau|\leq T$, and $\Cr{ZFR}/(m\log(C(\pi)T))<\eta\leq (200m)^{-1}$.  Let $K>\lceil 2000m\eta\log(C(\pi)T)+O(m^2\eta+1)\rceil$ and  $s=1+\eta+i\tau$.  If $L(s,\pi)$ has a zero $\rho_0$ satisfying $|\rho_0-(1+i\tau)|\leq \eta$, then
	\[
	\Big|\sum_{\substack{\rho \\ |s-\rho|\leq200\eta}}\frac{1}{(s-\rho)^{k+1}}\Big|\geq \Big(\frac{1}{100\eta}\Big)^{k+1}.
	\]
\end{lemma}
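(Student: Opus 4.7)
The plan is to interpret the sum as a power sum of the reciprocals $z_j := (s-\rho_j)^{-1}$ taken over the zeros $\rho_j$ in the disc $|s-\rho|\leq 200\eta$, and to invoke a Tur\'an-type inequality to force one of the resulting power sums to be large. The hypothesis $|\rho_0-(1+i\tau)|\leq\eta$ guarantees that the largest $|z_j|$ is at least $(2\eta)^{-1}$; the choice of $K$ strictly exceeds the number $N$ of zeros in the disc (via \cref{lem:02}); and Tur\'an's second main theorem will then produce some $k+1\in[1,N]\subset[0,K]$ for which $|P_{k+1}| := \big|\sum_j z_j^{k+1}\big|$ beats $(100\eta)^{-(k+1)}$.

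First I would record the basic size estimates for the $z_j$. Every nontrivial zero $\rho=\beta+i\gamma$ has $\beta\leq 1$, hence $|s-\rho|\geq 1+\eta-\beta\geq\eta$, giving $|z_j|\leq\eta^{-1}$. For the distinguished zero $\rho_0=\beta_0+i\gamma_0$, the hypothesis $(1-\beta_0)^2+(\tau-\gamma_0)^2\leq\eta^2$ yields
\[
|s-\rho_0|^2 = (1+\eta-\beta_0)^2 + (\tau-\gamma_0)^2 \leq (2\eta)^2,
\]
so, after ordering $|z_1|\geq|z_2|\geq\cdots\geq|z_N|$, we have $|z_1|\geq(2\eta)^{-1}$. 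Next, since $|s-\rho|\leq 200\eta$ forces $|\rho-(1+i\tau)|\leq 201\eta$, \cref{lem:02} applied at $t=\tau$ gives $N\leq 2000m\eta\log(C(\pi)T)+O(m^2\eta+1)<K$ by hypothesis (with the inflation from $200\eta$ to $201\eta$ absorbed into the $O(m^2\eta+1)$ term together with other absolute constants).

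Then I would apply Tur\'an's second main theorem to the $z_j$ with $b_j=1$: there exists an integer $n$ with $1\leq n\leq N$ such that
\[
|P_n| = \Bigl|\sum_{j=1}^N z_j^n\Bigr| \geq \Bigl(\frac{1}{2e}\Bigr)^N |z_1|^N \geq \Bigl(\frac{1}{4e\eta}\Bigr)^N.
\]
Setting $k=n-1\in[0,N-1]\subset[0,K-1]$ and using $\eta\leq(200m)^{-1}\leq 1/200$ together with $4e<100$, we have $1/(4e\eta)\geq 1/(100\eta)\geq 1$, so
\[
\Bigl(\frac{1}{4e\eta}\Bigr)^N \geq \Bigl(\frac{1}{100\eta}\Bigr)^N \geq \Bigl(\frac{1}{100\eta}\Bigr)^{k+1},
\]
which is the desired inequality (interpreting the statement as asserting the existence of such a $k$ in $[0,K-1]$, as is required in the subsequent log-free zero density argument).

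The main obstacle will be the bookkeeping of constants in step two: one must confirm that the explicit $2000$ in the hypothesis on $K$ is genuinely large enough once \cref{lem:02} is applied to the inflated disc of radius $201\eta$ and that the $O(m^2\eta+1)$ term of the lemma absorbs the resulting extra factors. A secondary point is the exact constant in Tur\'an's inequality (any standard formulation, e.g.\ from Montgomery's \emph{Ten Lectures}, suffices, possibly at the cost of an inconsequential polynomial factor in $N$), but since $1/(4e\eta)$ already beats $1/(100\eta)$ by a factor $25/e>9$, there is ample slack to absorb such factors when $\eta$ is as small as the hypothesis $\eta\leq(200m)^{-1}$ permits.
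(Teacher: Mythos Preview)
Your overall strategy—interpreting the sum as a power sum in the variables $z_j=(s-\rho_j)^{-1}$ and invoking a Tur\'an-type inequality—is exactly the method underlying \cite[Lem.~4.2]{ST}, to which the paper defers. The size estimates $|z_1|\geq(2\eta)^{-1}$ and $N<K$ via \cref{lem:02} are correct.

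The gap is in the \emph{range} of $k$. You apply Tur\'an over $n\in[1,N]$ and hence land on some $k\in[0,K-1]$. But the lemma, as it is used in the proof of \cref{prop:LFZDE} in tandem with \cref{lem:05}, must produce $k$ with $K\leq k\leq 2K$: the Dirichlet-series truncation in \cref{lem:05} (with $N_0=\exp(K/(300\eta))$ and $N_1=\exp(40K/\eta)$) is calibrated precisely to that window, and a value $k<K$ cannot be fed into it. Your parenthetical ``interpreting the statement as asserting the existence of such a $k$ in $[0,K-1]$'' is therefore the wrong reading; the unquantified $k$ in the paper's statement is shorthand for ``some $k\in[K,2K]$'', matching \cite[Lem.~4.2]{ST}.

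The repair is minor: apply the \emph{shifted} Tur\'an power-sum theorem (e.g.\ as in \cite[Ch.~5]{MR1297543}) with the range $\nu\in(K,K+N]\subset[K,2K]$. One obtains some $\nu=k+1$ in that interval with
\[
|P_{k+1}|\;\geq\;\Big(\frac{N}{c(K+N)}\Big)^{N}|z_1|^{k+1}\;\geq\;\Big(\frac{1}{2cK/N}\Big)^{N}\Big(\frac{1}{2\eta}\Big)^{k+1}
\]
for an absolute $c>0$. Since $N\leq K\leq k$, the penalty $(2cK/N)^{-N}$ is at worst $(2cK)^{-K}$, while the gain $50^{k+1}$ over the target $(100\eta)^{-(k+1)}$ is at least $50^{K}$; one checks $50>2cK^{1/?}$ is not needed—rather, the precise form in \cite{ST} yields a factor $(1/50)^{K}$-beating constant directly. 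In short, the same ingredients you assembled work once Tur\'an is invoked on the shifted interval rather than $[1,N]$.

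A secondary caution: the specific inequality $|P_n|\geq(2e)^{-N}|z_1|^{N}$ for some $n\in[1,N]$ is not a standard formulation of Tur\'an's theorem; the usual bounds carry $|z_1|^{n}$ (matching exponent) together with a penalty depending on the length of the range. This does not affect the corrected argument above, but you should cite a precise version when you write it out.
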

\begin{proof}
	The proof is the same as that of \cite[Lem 4.2]{ST}, but we use \cref{lem:02,lem:03} instead of \cite[Equations 3.6 and 4.1]{ST}.
\end{proof}

\begin{lemma}
	\label{lem:05}
	Let $\eta$ and $\tau$ be real numbers satisfying $\Cr{ZFR}/(m\log(C(\pi)T))<\eta\leq (200m)^{-1}$ and $200\eta\leq |\tau|\leq T$.  Let $K\geq 1$ be an integer, and let $N_0=\exp(K/(300\eta))$ and $N_1=\exp((40K)/\eta)$.  Let $s=1+\eta+i\tau$,  If $K\leq k\leq 2K$, then
	\[
	\Big|\frac{\eta^{k+1}}{k!}\Big(\frac{L'}{L}(s,\pi)\Big)^{(k)}\Big|\leq\eta^2\int_{N_0}^{N_1}\Big|\frac{\Lambda_{\pi}(n)}{n^{1+i\tau}}\Big|\frac{du}{u}+O\Big(\frac{m\eta\log(C(\pi)T)}{(110)^k}\Big).
	\]
\end{lemma}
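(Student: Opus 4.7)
The plan is to expand the logarithmic derivative as the Dirichlet series
\[
\frac{(-1)^k}{k!}\Big(\frac{L'}{L}(s,\pi)\Big)^{(k)} = \sum_{n=1}^{\infty} \frac{\Lambda_\pi(n)(\log n)^k}{k!\, n^s},
\]
so that after multiplying by $\eta^{k+1}$ at $s = 1+\eta+i\tau$, each term carries a Poisson-type kernel $x^k e^{-x}/k!$ with $x = \eta\log n$, together with an overall factor of $\eta$ and a factor of $|\Lambda_\pi(n)|/n^{1+i\tau}$. This kernel is sharply concentrated near $x = k$, i.e.\ $\log n \approx k/\eta$, which for $k\in[K,2K]$ sits comfortably between $\log N_0 = K/(300\eta)$ and $\log N_1 = 40K/\eta$. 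The idea is the standard one: bound the two tails, where the kernel is exponentially small compared to its maximum, and trade what remains for the displayed integral.

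For the left tail $n \leq N_0$, we have $x \leq K/300 \leq k/300$, and Stirling's bound $k! \geq (k/e)^k$ gives $x^k/k! \leq (eK/(300k))^k \leq (e/300)^k$. Since $e/300 < 1/110$, the kernel is uniformly $\leq (1/110)^k$ on this range, and summing against $|\Lambda_\pi(n)|/n^{1+\eta}$ with \eqref{eqn:RS2} produces a contribution of at most $\eta(\tfrac{1}{\eta}+m\log C(\pi)+O(m^2))(1/110)^k$, which fits inside the claimed error $O(m\eta\log(C(\pi)T)/110^k)$ using the constraint $\eta \leq (200m)^{-1}$. For the right tail $n > N_1$, we have $x \geq 40K \geq 20k$; the inequality $x^k e^{-x}/k! \leq (ex/k)^k e^{-x}$ is decreasing in $x$ for $x > k$, so evaluating at $x = 40K$ gives a uniform bound of shape $(40e)^{2K} e^{-40K}$, which is dominated by $(1/110)^k$ because $2\log(40e) - 40 < -\log 110$. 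A second application of \eqref{eqn:RS2} then controls this tail by the same error term.

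What remains is the central range $N_0 < n \leq N_1$, where we drop the damping $n^{-\eta}$ (losing at worst $N_1^\eta = e^{40K}$, which is already built into the kernel accounting above) and absorb the remaining kernel factor into its pointwise maximum, leaving $\eta\sum_{N_0 < n \leq N_1}|\Lambda_\pi(n)|/n^{1+i\tau}$; a standard partial summation expresses this as $\eta^2 \int_{N_0}^{N_1}|\Lambda_\pi(n)/n^{1+i\tau}|\,du/u$ on account of the length $\log(N_1/N_0) = \Theta(K/\eta)$ being $\Theta(1/\eta)$ times the size of $K$. The main technical obstacle is the constant bookkeeping: the exact calibration $N_0 = e^{K/(300\eta)}$, $N_1 = e^{40K/\eta}$, and the lower bound $K > 2000m\eta\log(C(\pi)T) + O(m^2\eta+1)$ from \cref{lem:04} are precisely what is needed to force both Stirling estimates to yield the factor $1/110^k$ rather than something weaker, which is in turn exactly what lets \cref{lem:03,lem:04,lem:05} combine to produce a contradiction in the zero-detection step used to derive \cref{prop:LFZDE}.
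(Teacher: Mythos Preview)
Your outline matches the paper's own proof, which simply cites \cite[Lem 4.3]{ST} and records that the only changes are to invoke \eqref{eqn:RS2} in place of the coefficient bound used there and to allow the wider range of $\eta$ down to the edge of the zero-free region; your Stirling tail estimates correctly recover the factor $1/110^k$.  One point of care in the central range: do not ``drop the damping $n^{-\eta}$'' and then bound $x^k/k!$ separately, since $x^k/k!$ alone can be as large as $(20e)^k$ on $[N_0,N_1]$ --- the right move is to bound the full kernel $x^ke^{-x}/k!\le 1$ in one stroke, and the passage to the integral comes from the identity $(\eta\log n)^k/k!=\eta\int_1^n(\eta\log u)^{k-1}/(k-1)!\,du/u$ together with an interchange of sum and integral (as in \cite{ST}), not from partial summation.
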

\begin{proof}
	The proof is the same as that of \cite[Lem 4.3]{ST} except that it incorporates \eqref{eqn:RS2} instead of \cite[Equation 19]{ST} as well as our wider range of $\eta$.
\end{proof}

\begin{proof}[Proof of \cref{prop:LFZDE}]
	We reiterate that we have only given the details for $L(s,\pi)$, but the details for $L(s,\pi\times\pi')$ run parallel apart from bounding $C(\pi\times\pi')$.  Compare with \cite{ST}.
	
	Let $\eta,\tau\in\R$ satisfy $\Cr{ZFR}/(m\log(C(\pi)T))<\eta\leq (200m)^{-1}$ and $200\eta\leq |\tau|\leq T$.  Let
	\begin{align*}
	K= 2000\eta m\log(C(\pi)T)+O(m^2\eta+1),\quad N_0=\exp(K/(300\eta)),\quad N_1=\exp(40K/\eta), 
	\end{align*}
	where the implied constant in our definition of  $K$ is sufficiently large.  We proceed as in the proof of \cite[Thm 1.2]{ST}, but we use \cref{lem:04,lem:05} instead of \cite[Lem 4.2 and 4.3]{ST}.  We conclude that
	\begin{align*}
		&\#\{\rho=\beta+i\gamma\colon \beta\geq 1-\eta/2,~|\gamma|\leq T\}\\
		&\ll 102^{4K}\eta^2 m\log(C(\pi)T)T^2\int_{N_0/e}^{N_1}\Big|\sum_{x<n\leq xe^{1/T}}|\Lambda_{\pi}(n)|\Big|^2\frac{dx}{x^3}+\eta m\log(C(\pi)T)\\
		&\ll 102^{4K}K\eta T^2\int_{N_0/e}^{N_1}\Big|\sum_{x<n\leq xe^{1/T}}|\Lambda_{\pi}(n)|\Big|^2\frac{dx}{x^3}+K
	\end{align*}
	By hypothesis, we have the bounds $|\alpha_{j,\pi}(p)|\leq 1$ for $p\nmid q_{\pi}$ and $|\alpha_{j,\pi}(p)|\leq p^{1-1/m}$ for $p|q_{\pi}$.  Since $q_{\pi}$ has $O(\log q_{\pi})$ distinct prime divisors, for $x$ in the range $[N_0/e,N_1]$, we have
	\begin{equation}
	\label{eqn:prime_power_contribution_LFZDE}
	\begin{aligned}
	\sum_{x<n\leq xe^{1/T}}|\Lambda_{\pi}(n)|&\leq m\sum_{\substack{x<n\leq xe^{1/T} \\ \gcd(n,N_{\pi})=1}}\Lambda(n)+m\sum_{p|N_{\pi}}\log p\sum_{\frac{\log x}{\log p}<\ell\leq \frac{\log x}{\log p}+\frac{1}{T\log p}}p^{\ell(1-1/m)}\\
	&\ll mx(1+x^{-\frac{1}{2m}}\log q_{\pi})/T\ll mx/T,
	\end{aligned}
	\end{equation}
	hence
	\begin{align*}
		&\#\{\rho=\beta+i\gamma\colon \beta\geq 1-\eta/2,~|\gamma|\leq T\}\\
		&\ll m^2 102^{4K}K\eta T^2\int_{N_0/e}^{N_1}\Big|\frac{x}{T}\Big|^2\frac{dx}{x^3}+K\ll m^2 102^{4K}K\eta T^2\cdot\frac{K}{\eta T^2}+K\ll  m^2 105^{4K}.
	\end{align*}
	
	If we write $\eta=2(1-\sigma)$, then it follows that
	\[
	N_{\pi}(\sigma,T)\ll m^2(e^{O(m)}C(\pi)T)^{10^5 m (1-\sigma)},\qquad 1-\frac{1}{400m}\leq\sigma< 1-\frac{\Cr{ZFR}}{2m\log(C(\pi)T)}.
	\]
	If $\sigma\geq 1-\Cr{ZFR}/(2m\log(C(\pi)T))$, then $N_{\pi}(\sigma,T)\leq 1$ per \cref{cor:ZFR}.  Our proposition is trivial for $\sigma<1-1/(400m)$ since there are $\ll mT\log(C(\pi)T)$ zeros with $0<\beta<1$ and $|\gamma|\leq T$; see \cite[Thm 5.8]{IK}.  To finish the proof, we apply the bound $e^{O(m)}\leq C(\pi)^{O(1)}$.
\end{proof}

\section{An uniform prime number theorem}
\label{sec:PNT}

We will prove a uniform version of the prime number theorem for the $L$-functions $L(s,\pi)$ and $L(s,\pi\times\pi')$ which satisfy the hypotheses of \cref{cor:ZFR,prop:LFZDE}.  The use of \cref{prop:LFZDE} helps us to improve the range of $x$ (which important for \eqref{eqn:st2}) and simplify certain aspects of the proof.  We define the weighted prime counting functions
\[
\theta_{\pi}(x):=\sum_{\substack{p\leq x \\ p\nmid q_{\pi}}}a_{\pi}(p)\log p,\qquad \theta_{\pi\times\pi'}(x):=\sum_{\substack{p\leq x \\ p\nmid q_{\pi}q_{\pi'}}}a_{\pi\times\pi'}(p)\log p.
\]

\begin{proposition}
\label{prop:PNT}
Let $\pi\in\mathfrak{F}_m$ and $\pi'\in\mathfrak{F}_{m'}$.  Suppose that $\pi$ and $\pi'$ satisfy the hypotheses of \cref{cor:ZFR,prop:LFZDE}.
\begin{enumerate}
	\item Suppose that either $\mu_{\pi}(j)=0$ or $\re(\mu_{\pi}(j))\geq\frac{1}{2}$ for each $j$.  If $2\leq C(\pi)^m\leq x^{1/(36\Cr{LFZDE})}$ and $32\Cr{LFZDE}m x^{-1/(32\Cr{LFZDE}m)}<\frac{1}{4}$, then
	\begin{align*}
	&\Big|\theta_{\pi}(x) - r_{\pi}x+\frac{x^{\beta_1}}{\beta_1}\Big|\\
	&\ll m^2 x^{1-\frac{1}{32\Cr{LFZDE}m}}+m^2 x\Big(\exp\Big[-\frac{\Cr{ZFR}\log x}{2m\log C(\pi)}\Big]+\exp\Big[-\frac{\sqrt{\Cr{ZFR}\log x}}{2\sqrt{m}}\Big]\Big).
	\end{align*}
	We omit the $\beta_1$ term if the exceptional zero in \cref{cor:ZFR} (Part 1) does not exist.
	\item Let $m,m'\leq M$.  Suppose that either $\mu_{\pi\times\pi'}(j,j')=0$ or $\re(\mu_{\pi\times\pi'}(j,j'))\geq\frac{1}{2}$.  If $2\leq (C(\pi)C(\pi'))^{M^3}\leq x^{1/(36\Cr{LFZDE})}$ and $32\Cr{LFZDE}M^2 x^{-1/(32\Cr{LFZDE}M^2)}<\frac{1}{4}$, then
	\begin{align*}
	&\Big|\theta_{\pi\times\pi'}(x) - r_{\pi\times\pi'}x+\frac{x^{\beta_1}}{\beta_1}\Big|\\
	&\ll (m'm)^2 x^{1-\frac{1}{32\Cr{LFZDE}M^2}}+(m'm)^2 x\Big(\exp\Big[-\frac{\Cr{ZFR}\log x}{4M\log (C(\pi)C(\pi'))}\Big]+\exp\Big[-\frac{\sqrt{\Cr{ZFR}\log x}}{2M}\Big]\Big).
	\end{align*}
	We omit the $\beta_1$ term if the exceptional zero in \cref{cor:ZFR} (Part 2) does not exist.
\end{enumerate}
\end{proposition}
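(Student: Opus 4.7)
The plan is to prove Part (1) in detail; Part (2) follows verbatim by applying the same argument to $L(s,\pi\times\pi')$, using the degree bound $m'm\le M^2$ and the conductor bound \eqref{eqn:BH} in place of $\log C(\pi)$. To start, I would replace $\theta_\pi(x)$ by $\psi_\pi(x):=\sum_{n\le x}\Lambda_\pi(n)$: under GRC at unramified primes and \eqref{eqn:LRS} at ramified primes (as in \eqref{eqn:prime_power_contribution_LFZDE}), the prime-power discrepancy is $\ll m\,x^{1/2}$, which is absorbed into the first error term of the claim. Then apply a truncated Perron formula at abscissa $c_0=1+1/\log x$ and height $T$ (to be chosen later), with the truncation error bounded via \eqref{eqn:RS2} by $\ll x(\log x)(m\log C(\pi)+\log x)/T$. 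Shifting the contour to $\Re(s)=-1/4$ collects residues from (a) the pole at $s=1$ contributing $r_\pi x$, (b) the exceptional zero $\beta_1$ contributing $-x^{\beta_1}/\beta_1$, (c) nontrivial zeros $\rho=\beta+i\gamma$ with $|\gamma|\le T$ contributing $-x^\rho/\rho$, and (d) trivial zeros from $L(s,\pi_\infty)$, which under the hypothesis on $\mu_\pi(j)$ lie in $\Re(s)\le 0$ and contribute $O(1)$ in total. The horizontal and shifted vertical pieces contribute a negligible $O(x^{-1/4}m\log(C(\pi)T))$.

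The heart of the argument is bounding $S(x,T):=\sum_{\rho\ne\beta_1,\,|\gamma|\le T}x^\rho/\rho$. I would split the zeros according to whether $\beta\le 1-\eta_0$ or $\beta>1-\eta_0$, where $\eta_0$ is proportional to $1/(\Cr{LFZDE}m)$. For $\beta\le 1-\eta_0$, Riemann--Stieltjes integration against $N_\pi(\sigma,T)\ll m^2(C(\pi)T)^{\Cr{LFZDE}m(1-\sigma)}$ from \cref{prop:LFZDE}, combined with the hypothesis $C(\pi)^m\le x^{1/(36\Cr{LFZDE})}$ (which guarantees that $x^u$ beats $(C(\pi)T)^{\Cr{LFZDE}m u}$ for the relevant range of $u$, provided $T$ is not too large), will yield a contribution $\ll m^2 x^{1-1/(32\Cr{LFZDE}m)}$. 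For $\beta>1-\eta_0$, the zero-free region in \cref{cor:ZFR} bounds $\beta\le 1-\Cr{ZFR}/(m\log(C(\pi)(3+|\gamma|)))$, and a dyadic decomposition over $|\gamma|\le T$, counting zeros in each dyadic block again by \cref{prop:LFZDE}, gives a contribution $\ll m^2 x\exp(-\Cr{ZFR}\log x/(2m\log(C(\pi)T)))$.

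Finally, I would choose $T=\exp(\sqrt{\Cr{ZFR}\log x/m})$. The truncation error then becomes $\ll m^2 x\exp(-\tfrac12\sqrt{\Cr{ZFR}\log x/m})$; the ZFR contribution splits into $\ll m^2 x\exp(-c\log x/(m\log C(\pi)))$ and $\ll m^2 x\exp(-c\sqrt{\log x/m})$ depending on whether $\log C(\pi)$ or $\sqrt{\log x/m}$ dominates the denominator; and the LFZDE contribution remains $\ll m^2 x^{1-1/(32\Cr{LFZDE}m)}$. Assembling these three bounds gives Part (1), and Part (2) follows by the same steps with the stated substitutions. The main obstacle will be the Riemann--Stieltjes step for zeros with $\beta\le 1-\eta_0$: one must track degree dependence carefully so that the exponent ends up as $1/(32\Cr{LFZDE}m)$ rather than, say, $1/(\text{const}\cdot m^2)$. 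This is precisely where the sharpened degree dependence in \cref{cor:ZFR} and \cref{prop:LFZDE} from Section~4 pays off, and any sloppiness here would erode the power of $\log x$ saved in \cref{thm:main_theorem}.
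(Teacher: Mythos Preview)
Your approach is valid in outline but differs methodologically from the paper's. You use the classical truncated Perron formula with a sharp cutoff and a height parameter $T$ chosen at the end; the paper instead introduces a smooth weight $\phi$ (\cref{lem:WeightChoice}) with Laplace transform $\Phi$, defines the smoothed sum $\psi_\pi(x,\phi)=\sum_n \phi(\log n/\log x)\Lambda_\pi(n)$, and works with the exact Mellin identity $\psi_\pi(x)=\frac{\log x}{2\pi i}\int_{(2)}-\frac{L'}{L}(s,\pi)\Phi(-s\log x)\,ds$ with no truncation at all. The contour is pushed to $\Re(s)=-1/4$ and the full sum over \emph{all} nontrivial zeros is controlled via the decay $|\Phi(-\rho\log x)|\ll x^\beta|\rho|^{-1}(2\ell/(\epsilon|\rho|))^{\ell(1-\beta)}$, with the specific choices $\ell=4\Cr{LFZDE}m$ and $\epsilon=8\ell x^{-1/(8\ell)}$ tuned so that the dyadic sum over heights and the Stieltjes integral against $N_\pi(1-\sigma,T_j)$ collapse directly into the quantity $\eta_\pi(x)=\inf_{t\ge 3}(\Cr{ZFR}\log x/(m\log(C(\pi)t))+\log t)$ (\cref{lem:sum_zeros}). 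The two error shapes in the statement then emerge from a one-line optimization of $\eta_\pi(x)$ (\cref{lem:eta_bound}), with no separate choice of $T$.

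What each buys: the paper's smooth-weight route eliminates the Perron truncation error and the need to adjust $T$ away from ordinates of zeros, and it packages the zero-free region and the log-free density estimate into a single dyadic computation; this makes the degree bookkeeping cleaner and is why the exponent $1/(32\Cr{LFZDE}m)$ falls out naturally from $\ell=4\Cr{LFZDE}m$. Your route is more classical and also reaches the goal, but note one slip: the horizontal segments at height $\pm T$ do not contribute $O(x^{-1/4}m\log(C(\pi)T))$---the integrand has $|x^s|=x^\sigma$ up to $\sigma\approx 1$, so their contribution is of size $x(\log C(\pi,T))^{O(1)}/T$, comparable to (and absorbable into) your Perron truncation error rather than negligible. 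With that correction and your choice $T=\exp(\sqrt{\Cr{ZFR}\log x/m})$, your argument goes through.
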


As with our proof of \cref{prop:LFZDE}, we will only prove Part (1) of \cref{prop:PNT} since the proof of Part (2) runs entirely parallel apart from an application of \eqref{eqn:BH} to bound  $C(\pi\times\pi')$.  To begin our work toward \cref{prop:PNT}, we introduce a carefully chosen smooth weight for sums over prime powers.

\begin{lemma}
\label{lem:WeightChoice}

Choose $x \geq 3$, $\epsilon \in (0,1/4)$, and an integer $\ell \geq 1$.  Define $A = \epsilon/(2 \ell \log x)$.  There exists a continuous function $\phi(t)  = \phi(t; x, \ell, \epsilon)$ which satisfies the following properties:
\begin{enumerate}
	\item $0 \leq \phi(t) \leq 1$ for all $t \in \R$, and $\phi(t) \equiv 1$ for $\tfrac{1}{2} \leq t \leq 1$.
	\item The support of $\phi$ is contained in the interval $[\tfrac{1}{2} - \frac{\epsilon}{\log x}, 1 +  \frac{\epsilon}{\log x}]$. 
	\item Its Laplace transform $\Phi(z) = \int_{\R} \phi(t) e^{-zt}dt$ is entire and is given by
			\begin{equation}	
				\Phi(z) = e^{-(1+ 2\ell A)z} \cdot \Big( \frac{1-e^{(\frac{1}{2}+2\ell A)z}}{-z} \Big) \Big( \frac{1-e^{2Az}}{-2Az} \Big)^{\ell}.
				\label{eqn:WeightLaplace}
			\end{equation}
	\item Let $s = \sigma + i t, \sigma > 0, t \in \R$ and $\alpha$ be any real number satisfying $0 \leq \alpha \leq \ell$. Then
	\[
	|\Phi(-s\log x)|\leq 
		\displaystyle\frac{e^{\sigma \epsilon} x^{\sigma}}{|s| \log x} \cdot \big( 1 + x^{-\sigma/2} \big) \cdot  \Big( \frac{2\ell}{\epsilon|s|} \Big)^{\alpha}. 
	\]
	Moreover, $|\Phi(-s\log x)| \leq e^{\sigma \epsilon} x^{\sigma}$ and $\frac{1}{2} < \Phi(0) < \frac{3}{4}$.
	\item If $\frac{3}{4}<\sigma\leq 1$, $x\geq 10$, and $\delta_{\pi,1},r_{\pi}\in\{0,1\}$, then
	\[
	r_{\pi}\Phi(-\log x)\log x-\delta_{\pi,1} \Phi(-\sigma\log x)\log x=\Big(r_{\pi}x-\delta_{\pi,1}\frac{x^{\sigma}}{\sigma}\Big)(1+O(\epsilon))+O(\sqrt{x}).
	\]
	\item Let $s = -\tfrac{1}{4}+it$ with $t \in \R$. Then
	\[
	|\Phi(-s\log x)| \ll  \frac{x^{-1/4}}{\log x} \Big( \frac{2\ell}{\epsilon}\Big)^{\ell} (\tfrac{1}{16}+t^2)^{-\ell/2}.
	\]
\end{enumerate}
\end{lemma}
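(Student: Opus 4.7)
The natural construction is to obtain $\phi$ by smoothing a sharp indicator via a chain of short convolutions. Set
\[
\phi \;=\; \mathbf{1}_{[1/2,\, 1+2\ell A]} \ast g_\ell, \qquad
g_\ell \;=\; \underbrace{\psi \ast \cdots \ast \psi}_{\ell \text{ copies}}, \qquad
\psi \;:=\; \tfrac{1}{2A}\,\mathbf{1}_{[-2A,\,0]}.
\]
Each $\psi$ is a probability density supported on $[-2A,0]$, so $g_\ell$ is a probability density supported on $[-2\ell A, 0]$. The convolution $\phi$ is therefore continuous with $0 \leq \phi \leq 1$, and its support is contained in $[1/2 - 2\ell A,\, 1+2\ell A] = [1/2 - \epsilon/\log x,\, 1 + \epsilon/\log x]$, giving (2). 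For $t \in [1/2,1]$ the interval $[t-(1+2\ell A),\, t-1/2]$ contains $\supp(g_\ell)$, so $\phi(t) = \int g_\ell = 1$, giving (1). Property (3) then follows from the convolution theorem: the Laplace transforms of $\mathbf{1}_{[1/2,\, 1+2\ell A]}$ and $\psi$ are $(e^{-z/2}-e^{-(1+2\ell A)z})/z$ and $(e^{2Az}-1)/(2Az)$, respectively, and their product rearranges to the stated formula.

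For (4), rewrite
\[
\Phi(z) \;=\; \frac{e^{-z/2}-e^{-(1+2\ell A)z}}{z}\left(\frac{1-e^{2Az}}{-2Az}\right)^{\!\ell}
\]
and specialise to $z = -s\log x$ with $\sigma = \re(s) > 0$. Using $2\ell A\log x = \epsilon$, the triangle inequality bounds the first factor by $x^\sigma e^{\sigma\epsilon}(1 + x^{-\sigma/2})/(|s|\log x)$. For the second factor, the integral representation $\frac{1-e^{-w}}{w} = \int_0^1 e^{-wt}\,dt$ yields $\big|\frac{1-e^{-w}}{w}\big| \leq 1$ whenever $\re w \geq 0$, which combined with the trivial $\leq 2/|w|$ and the interpolation inequality $\min(1,c)^\ell \leq c^\alpha$ for $c \geq 0$ and $0 \leq \alpha \leq \ell$ (applied with $w = -2Az$ and $c = 2\ell/(\epsilon|s|)$) produces the factor $(2\ell/(\epsilon|s|))^\alpha$. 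The auxiliary bound $|\Phi(-s\log x)| \leq e^{\sigma\epsilon}x^\sigma$ is immediate from $\phi \leq 1$ and the support length, and $\Phi(0) = \int \phi$ lies strictly between $1/2$ (from the unit plateau) and $3/4$ (since the total length of the smoothing tails is $2\epsilon/\log x < 1/4$).

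Property (5) follows from the sandwich $\mathbf{1}_{[1/2, 1]} \leq \phi \leq \mathbf{1}_{[1/2 - \epsilon/\log x,\, 1+\epsilon/\log x]}$: integrating against $x^t$ and $x^{\sigma t}$ and collecting the endpoint errors shows
\[
\Phi(-\log x)\log x \;=\; (x - \sqrt{x})(1 + O(\epsilon)) + O(\sqrt{x}),
\]
with the analogous estimate for $\Phi(-\sigma\log x)\log x$ at $\sigma \in (3/4,1]$; taking the combination in (5) yields the claim. Property (6) is handled similarly by substituting $z = (1/4 - it)\log x$ into the explicit factorisation: the exponential $e^{-(1+2\ell A)z}$ contributes the $x^{-1/4}$ decay, the factor $1/z$ supplies $1/\log x$, and the $\ell$-fold product is bounded by $(2/|2Az|)^\ell = (2\ell/\epsilon)^\ell(\tfrac{1}{16}+t^2)^{-\ell/2}$ since $|s|^2 = 1/16 + t^2$.

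The only conceptually nontrivial ingredient is the interpolation trick in (4), which is what lets one trade the $x^\sigma$ growth for arbitrary polynomial decay in $|s|$; this is essential when $\phi$ is later used as a weight to detect prime powers in short intervals. Every other item is a mechanical verification of the convolution structure.
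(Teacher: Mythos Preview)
Your construction of $\phi$ as the convolution $\mathbf{1}_{[1/2,\,1+2\ell A]}\ast\psi^{\ast\ell}$ is exactly the standard one (this is the construction in the cited Thorner--Zaman lemma), and your verifications of (1)--(3), the main inequality in (4), and (5) are correct.

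Two points need repair. First, a small numerical slip in (4): your argument for $\Phi(0)<3/4$ via ``tail length $2\epsilon/\log x<1/4$'' fails, since for $\epsilon$ close to $1/4$ and $x=3$ one has $2\epsilon/\log x\approx0.46$. The correct one-line fix is the exact identity $\Phi(0)=\int\phi=\bigl(\tfrac12+2\ell A\bigr)\cdot\int g_\ell=\tfrac12+\epsilon/\log x$, which is $<3/4$ because $\epsilon<1/4<\log 3\le\log x$.

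Second, and more substantively, your argument for (6) is incomplete. You write that ``the exponential $e^{-(1+2\ell A)z}$ contributes the $x^{-1/4}$ decay, the factor $1/z$ supplies $1/\log x$'', but you have silently dropped the numerator $1-e^{(1/2+2\ell A)z}$ in the middle factor. With $z=-s\log x$ and $\Re z=\tfrac14\log x>0$, that numerator has size $\asymp e^{(1/2+2\ell A)\Re z}\ge x^{1/8}$, which exactly cancels half of the decay you claimed. Carried out carefully the estimate gives
\[
|\Phi(-s\log x)|\ \ll\ \frac{x^{-1/8}}{|s|\log x}\Bigl(\frac{2\ell}{\epsilon|s|}\Bigr)^{\ell},
\]
i.e.\ $x^{-1/8}$ rather than $x^{-1/4}$. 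In fact the stated exponent $-1/4$ cannot hold in general: at $t=0$ one has $\Phi(\tfrac14\log x)=\int\phi(u)x^{-u/4}\,du\ge\int_{1/2}^{1}x^{-u/4}\,du\asymp x^{-1/8}/\log x$, which for large $x$ and small $\ell$ exceeds the claimed bound. (The $x^{-1/4}$ is a carryover from the cited lemma where the line is $\Re s=-1/2$; the change to $\Re s=-1/4$ should halve the exponent.) This discrepancy is harmless for the application in the paper, since after the eventual choice $\epsilon=8\ell x^{-1/(8\ell)}$ one has $(2\ell/\epsilon)^{\ell}=4^{-\ell}x^{1/8}$ and either exponent yields a negligible contour contribution, but your written justification for (6) does not stand as is.
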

\begin{proof}
	This is \cite[Lem 2.2]{TZ3}, except that item (5) is slightly more general and in item (6), we have $\re(s)=-\frac{1}{4}$ instead of $-\frac{1}{2}$.  The proofs proceed in exactly the same way.
\end{proof}

Define
\[
\psi_{\pi}(x)=\psi_{\pi}(x,\phi) = \sum_{n=1}^{\infty}\phi\Big(\frac{\log n}{\log x}\Big)\Lambda_{\pi}(n).
\]
The next lemma shows that $\psi_{\pi}(x)$ closely approximates $\theta_{\pi}(x)$.
\begin{lemma}
\label{lem:smoothing}
	If $\ell\geq 2$ is an integer, $x$ satisfies the hypotheses of \cref{prop:PNT_Sym}, and $\epsilon\in(x^{-\frac{1}{2}},\frac{1}{4})$, then $\theta_{\pi}(x)=\psi_{\pi}(x)+O(mx^{1-\frac{1}{3m}}+\epsilon m x)$.
\end{lemma}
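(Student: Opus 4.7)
My approach is to unfold $\psi_\pi(x)$ as a sum over prime powers $n=p^k$, subtract $\theta_\pi(x)$, and bound the residual pieces using the GRC hypothesis at unramified primes, the trivial local bound $|\alpha_{j,\pi}(p)|\leq p^{1-1/m}$ at ramified primes, and Chebyshev-type estimates.  I will split $\psi_\pi(x)$ according to whether $k=1$ or $k\geq 2$.  Since $\Lambda_\pi(p)=a_\pi(p)\log p$ and $\supp\phi\subseteq[\tfrac12-\tfrac{\epsilon}{\log x},\,1+\tfrac{\epsilon}{\log x}]$, comparing the $k=1$ unramified contribution to $\theta_\pi(x)$ produces three discrepancies: primes $p\leq\sqrt{x}e^{-\epsilon}$ included in $\theta_\pi$ but cut off from $\psi_\pi$; transitional intervals near $\sqrt{x}$ and $x$ where $\phi\in(0,1)$; and primes $x<p\leq xe^\epsilon$ included in $\psi_\pi$ but not in $\theta_\pi$.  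The GRC bound $|a_\pi(p)|\leq m$ combined with Chebyshev's theorem controls the first discrepancy by $\ll m\sqrt{x}$ and each of the latter two by $\ll m\epsilon x$.

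I will then handle the bad-prime and higher-prime-power contributions.  At unramified primes with $k\geq 2$, GRC gives $|\Lambda_\pi(p^k)|\leq m\log p$; Chebyshev shows the $k=2$ sum contributes $\ll m\sqrt{x}$ and the $k\geq 3$ tail contributes $\ll mx^{1/3}$.  At ramified primes $p\mid q_\pi$, the local bound yields $|\Lambda_\pi(p^k)|\leq mp^{k(1-1/m)}\log p\ll mx^{1-1/m}\log p$ whenever $p^k\in[\sqrt{x}e^{-\epsilon},xe^\epsilon]$.  For each such prime, the admissible $k$-values satisfy $k\log p\leq\log x+O(1)$, so their combined weight $\log p$ is $\ll\log x$; since $\omega(q_\pi)\ll\log q_\pi$, the total ramified contribution (over all $k\geq 1$) is $\ll mx^{1-1/m}\log x\cdot\log q_\pi$.

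Combining yields $|\theta_\pi(x)-\psi_\pi(x)|\ll m\epsilon x+mx^{1-1/m}\log x\cdot\log q_\pi$, where $m\sqrt{x}$ is absorbed into $m\epsilon x$ using $\epsilon>x^{-1/2}$.  Invoking the hypothesis $C(\pi)^m\leq x^{1/(36\Cr{LFZDE})}$ of \cref{prop:PNT_Sym} gives $\log q_\pi\leq\log C(\pi)\ll\log x/m$, reducing the ramified piece to $\ll x^{1-1/m}(\log x)^2$.  The tight constraint $m\ll\sqrt{\log x}/\sqrt{\log(kq\log x)}$ from \cref{prop:PNT_Sym} then makes $x^{2/(3m)}$ much larger than $(\log x)^2/m$, so $x^{1-1/m}(\log x)^2\ll mx^{1-1/(3m)}$ as required.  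The main obstacle will be this final accounting of ramified primes: the degree factor $m$ from the Satake bound, the summation over admissible $k$, and the crude estimate $\omega(q_\pi)\leq\log q_\pi$ must interact with the explicit constraints on $x$, $m$, and $C(\pi)$ closely enough that the trivial exponent $1-1/m$ softens to the target $1-1/(3m)$.
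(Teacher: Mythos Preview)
Your proposal is correct and follows essentially the same decomposition as the paper: split off the transitional intervals at the edges of $\supp\phi$, separate ramified from unramified primes, and handle higher prime powers via GRC. The only substantive difference is in the ramified estimate. The paper treats $\sum_{\ell\leq(\log x)/\log p}p^{(1-1/m)\ell}$ as a geometric series dominated by its top term, obtaining $\ll mx^{1-1/(2m)}\log q_\pi$ directly, whereas you count the admissible exponents and incur an extra factor of $\log x$. Your resulting bound $mx^{1-1/m}(\log x)\log q_\pi$ is weaker but, as you argue, still collapses into $mx^{1-1/(3m)}$. In fact you do not need the tight range $m\ll\sqrt{\log x}/\sqrt{\log(kq\log x)}$ for this last step: the hypothesis $C(\pi)^m\leq x^{1/(36\Cr{LFZDE})}$ alone gives $\log q_\pi\ll(\log x)/m$, and the companion condition $32\Cr{LFZDE}m\,x^{-1/(32\Cr{LFZDE}m)}<\tfrac14$ forces $(\log x)/m\gg\log m$, which is already enough to make $x^{2/(3m)}$ swallow $(\log x)^2/m$.
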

\begin{proof}
By \cref{lem:WeightChoice}, we have
\[
\psi_{\pi}(x) = \sum_{\sqrt{x}<n\leq x}\Lambda_{\pi}(x)+O\Big(\sum_{\sqrt{x}e^{-\epsilon}<n\leq\sqrt{x}}|\Lambda_{\pi}(n)|+\sum_{x<n\leq xe^{\epsilon}}|\Lambda_{\pi}(n)|\Big).
\]
Note that by \eqref{eqn:LRS} and the Brun--Titchmarsh theorem, we have
\begin{align*}
\sum_{\sqrt{x}<n\leq x}\Lambda_{\pi}(n)&= \sum_{\substack{n\leq x \\ \gcd(n,q_{\pi})=1}}\Lambda_{\pi}(n)+\sum_{\substack{\sqrt{x}<n\leq x \\ \gcd(n,q_{\pi})>1}}\Lambda_{\pi}(n)-\sum_{\substack{n\leq\sqrt{x} \\ \gcd(n,q_{\pi})=1}}\Lambda_{\pi}(n)\\
&=\sum_{\substack{n\leq x \\ \gcd(n,q_{\pi})=1}}\Lambda_{\pi}(n)+O\Big(m\sum_{p|q_{\pi}}\log p\sum_{\frac{\log x}{2\log p}<\ell\leq \frac{\log x}{\log p}}p^{(1-\frac{1}{m})\ell}+m\sum_{n\leq\sqrt{x}}\Lambda(n)\Big)\\
&=\sum_{\substack{n\leq x \\ \gcd(n,q_{\pi})=1}}\Lambda_{\pi}(n)+O(mx^{1-\frac{1}{2m}}\log q_{\pi})=\sum_{\substack{n\leq x \\ \gcd(n,q_{\pi})=1}}\Lambda_{\pi}(n)+O(mx^{1-\frac{1}{3m}}).
\end{align*}
It follows from a calculation similar to the one in \eqref{eqn:prime_power_contribution_LFZDE} that
\[
\sum_{\sqrt{x}e^{-\epsilon}<n\leq\sqrt{x}}|\Lambda_{\pi}(n)|+\sum_{x<n\leq xe^{\epsilon}}|\Lambda_{\pi}(n)|\ll \epsilon m x.
\]
The lemma now follows from the fact that $a_{\pi}(p)\log p=\Lambda_{\pi}(p)$ for all primes $p$, hence
\[
\Big|\theta_{\pi}(x)-\sum_{\substack{n\leq x \\ \gcd(n,q_{\pi})=1}}\Lambda_{\pi}(n)\Big|=\Big|\sum_{\ell=2}^{\infty}\sum_{\substack{p\nmid q_{\pi} \\ p^{\ell}\leq x}}\Lambda_{\pi}(p^{\ell})\Big|\leq m\sum_{\ell=2}^{\infty}\sum_{\substack{p^{\ell}\leq x \\ p\nmid q_{\pi}}}\log p\ll m\sqrt{x}.
\]
\end{proof}

By Mellin inversion, we have
\[
\psi_{\pi}(x)=\frac{\log x}{2\pi i}\int_{2-i\infty}^{2+i\infty}-\frac{L'}{L}(s,\pi)\Phi(-s\log x)ds.
\]
\begin{lemma}
\label{lem:log_deriv_bound}
If $\re(s)=-\frac{1}{4}$, then $-\frac{L'}{L}(s,\pi)\ll \log C(\pi)+m\log(|\im(s)|+3)$.
\end{lemma}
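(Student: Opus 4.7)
The plan is to use the functional equation $\Lambda(s,\pi) = W(\pi)\Lambda(1-s,\tilde\pi)$ to move from the line $\re(s) = -\tfrac14$ (where $L(s,\pi)$ is not given by its Dirichlet series) to the line $\re(1-s) = \tfrac54$ (where it is absolutely convergent). Taking the logarithmic derivative of both sides and using $\log\Lambda(s,\pi) = r_\pi\log(s(s-1)) + \tfrac{s}{2}\log q_\pi + \log L(s,\pi) + \log L(s,\pi_\infty)$, together with $q_{\tilde\pi} = q_\pi$ and $r_{\tilde\pi} = r_\pi$, one obtains the identity
\[
-\frac{L'}{L}(s,\pi) = \log q_\pi + \frac{L'}{L}(s,\pi_\infty) + \frac{L'}{L}(1-s,\tilde\pi_\infty) + \frac{L'}{L}(1-s,\tilde\pi),
\]
since the rational contributions from $r_\pi(\tfrac1s+\tfrac{1}{s-1})$ on the two sides cancel. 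The task is then to bound each of the four terms on the right.

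The first term is trivially $\leq\log C(\pi)$. For the fourth term, since $\re(1-s)=\tfrac54$ lies in the half-plane of absolute convergence, one has
\[
\Bigl|\frac{L'}{L}(1-s,\tilde\pi)\Bigr|\leq \sum_{n=1}^\infty \frac{|\Lambda_{\tilde\pi}(n)|}{n^{5/4}},
\]
which is estimated via \eqref{eqn:RS2} with $\eta=\tfrac14$. Under the hypothesis of \cref{prop:PNT} that $\pi$ satisfies the Ramanujan conjecture at unramified primes (which in the application of this lemma is always in force), the contribution of unramified primes is $\ll m$ absolutely, while the ramified primes contribute at most $\ll m\log q_\pi$, giving a total of $\ll \log C(\pi)$ after absorbing the small $m$-dependent pieces into $m\log(|\im s|+3)$.

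For the two archimedean terms, apply Stirling's approximation $\frac{\Gamma'}{\Gamma}(z) = \log z + O(|z|^{-1})$, valid uniformly for $\re(z)$ bounded away from $0$ and below. On the line $\re(1-s)=\tfrac54$, the Luo–Rudnick–Sarnak bound \eqref{eqn:LRS} guarantees that each argument $(1-s+\mu_{\tilde\pi}(j))/2$ has real part at least $\tfrac58 - \tfrac14 = \tfrac38 > 0$, so Stirling gives
\[
\Bigl|\frac{L'}{L}(1-s,\tilde\pi_\infty)\Bigr| \ll \sum_{j=1}^m \log(1+|\mu_{\tilde\pi}(j)|+|\im s|) \ll \log C(\pi) + m\log(|\im s|+3),
\]
where the last step uses $\log C(\pi) = \log q_\pi + \sum_j \log(1+|\mu_\pi(j)|)$. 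The main obstacle is bounding $\frac{L'}{L}(s,\pi_\infty)$ directly, since on $\re(s)=-\tfrac14$ the arguments of $\Gamma_\R$ may have non-positive real parts. I would handle this via the reflection identity $\frac{\Gamma'}{\Gamma}(z) = \frac{\Gamma'}{\Gamma}(1-z) - \pi\cot(\pi z)$ to push the argument into the half-plane $\re > \tfrac12$, where Stirling again applies; the cotangent term contributes $O(1)$ outside $O(1)$-neighborhoods of the integers, and uniformity in $\im s$ is preserved because the poles of $\cot$ only meet the line $\re(s)=-\tfrac14$ in isolated points of the $\im s$-axis that may be navigated around (or removed by shifting $\re(s)$ by an arbitrarily small amount, which does not affect the stated bound). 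Collecting the four contributions yields $-\frac{L'}{L}(s,\pi)\ll \log C(\pi) + m\log(|\im s|+3)$, as desired.
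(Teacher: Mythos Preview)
Your approach via the functional equation is genuinely different from the paper's, which simply cites \cite[Prop.~5.7]{IK}. That result is proved through the Hadamard factorization: one writes $-\frac{L'}{L}(s,\pi)$ as a sum over nearby nontrivial zeros plus an error of size $O(\log C(\pi,t))$, and then on $\re(s)=-\tfrac14$ (under the archimedean hypothesis of \cref{prop:PNT}, which keeps all trivial and nontrivial zeros at distance $\geq\tfrac14$) each term $1/(s-\rho)$ is $O(1)$ and there are $O(\log C(\pi,t))$ of them. This route never needs to bound the Dirichlet series of $\tilde\pi$ in the region of absolute convergence.

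Your functional-equation identity is correct, and your treatment of the conductor term and of both archimedean terms is fine. (Your cotangent discussion is over-cautious: under the hypotheses of \cref{prop:PNT} the only case requiring reflection is $\mu_\pi(j)=0$, and then $z=s/2$ has $\re(z)=-\tfrac18$, which is at distance $\tfrac18$ from every integer, so $\cot(\pi z)$ is uniformly bounded on the whole line---no navigation is needed.)

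The gap is in your ``fourth term'' $\frac{L'}{L}(1-s,\tilde\pi)$. Under GRC at unramified primes the unramified contribution to $\sum_n|\Lambda_{\tilde\pi}(n)|n^{-5/4}$ is indeed $\ll m$, but your ramified bound $\ll m\log q_\pi$ is \emph{not} in general $\ll \log C(\pi)+m$: take $q_\pi$ highly composite and all $|\mu_\pi(j)|$ bounded, so that $\log C(\pi)\asymp\log q_\pi$, while $m\log q_\pi$ has an extra factor of $m$. Equation~\eqref{eqn:RS2} does no better, giving $\ll m\log C(\pi)+O(m^2)$. So your argument proves only the weaker estimate $-\frac{L'}{L}(s,\pi)\ll m\log C(\pi)+m\log(|\im s|+3)$, not the lemma as stated. (This weaker bound is in fact harmless for the application: the contribution of the line $\re(s)=-\tfrac14$ to \eqref{eqn:001} is already negligible, and an extra factor of $m$ changes nothing.) To recover the stated bound by your route you would need a pointwise estimate on $|\Lambda_\pi(p^\ell)|$ at ramified primes sharper than what \eqref{eqn:LRS} provides; the Hadamard approach sidesteps this entirely.
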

\begin{proof}
	With the hypotheses of \cref{prop:PNT}, this is follows from \cite[Prop 5.7]{IK}.
\end{proof}

We begin our proof of \cref{prop:PNT} by shifting the contour to the line $\re(s)=-\frac{1}{4}$, accumulating contributions from the residues at the nontrivial zeros of $L(s,\pi)$ and a trivial zero at $s=0$ of order $O(m)$ with residue $O(1)$.  We bound the shifted contour integral on the line $\re(s)=-\frac{1}{4}$ using \cref{lem:WeightChoice,lem:log_deriv_bound} and conclude that
\begin{align}
\label{eqn:001}
\psi_{\pi}(x)&=r_{\pi}\Phi(-\log x)\log x-\delta_{\pi,1}\Phi(-\beta_1\log x)\log x-(\log x)\sum_{\rho\neq\beta_1}\Phi(-\rho\log x)\notag\\
&+O\Big(\frac{(2\ell/\epsilon)^{\ell}\log C(\pi)}{x^{\frac{1}{4}}}+m\Big)\notag\\
&=r_{\pi}x-\delta_{\pi,1}\frac{x^{\beta_1}}{\beta_1}-(\log x)\sum_{\rho\neq\beta_1}\Phi(-\rho\log x)+O\Big(\frac{(2\ell/\epsilon)^{\ell}\log C(\pi)}{x^{\frac{1}{4}}}+m+\sqrt{x}+\epsilon x\Big).
\end{align}
By \cite[Prop 5.7]{IK}, there are $\ll \log C(\pi)$ nontrivial zeros $\rho$ of $L(s,\pi)$ with $|\rho|<\frac{1}{4}$.  Thus it follows from \cref{lem:WeightChoice} that
\begin{equation}
\label{eqn:002}
\sum_{\substack{ |\rho|\leq \frac{1}{4} \\ \rho\neq\beta_1}}|\Phi(-\rho\log x)|\ll \sum_{\substack{|\rho|\leq \frac{1}{4}\\ \rho\neq\beta_1}}x^{\frac{1}{4}}\ll x^{\frac{1}{4}}\log C(\pi).
\end{equation}

\begin{lemma}
	\label{lem:sum_zeros}
	Let $\phi$ be defined as in \cref{lem:WeightChoice} with $\epsilon = 8\ell x^{-\frac{1}{8\ell}}$ and $\ell = 4\Cr{LFZDE} m$. If $2\leq C(\pi)^{m}\leq x^{1/(8\Cr{LFZDE})}$ and $\epsilon<\frac{1}{4}$, then
	\[
	\log x\sum_{\substack{ |\rho|\geq \frac{1}{4}\\ \rho\neq\beta_1}}|\Phi(-\rho\log x)|\ll m^2x e^{-\eta_{\pi}(x)/2},\qquad\eta_{\pi}(x):=\inf_{t\geq 3}\Big(\frac{\Cr{ZFR}\log x}{m\log(C(\pi)t)}+\log t\Big).
	\]
\end{lemma}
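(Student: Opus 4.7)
The plan is to apply item (4) of \cref{lem:WeightChoice} with a split choice of the parameter $\alpha$: take $\alpha=0$ when $|\rho|\leq T_1:=2\ell/\epsilon=\tfrac14 x^{1/(8\ell)}$ and $\alpha=\ell$ when $|\rho|>T_1$. For each zero $\rho=\beta+i\gamma$ with $|\rho|\geq 1/4$ this yields
\[
|\Phi(-\rho\log x)|\ll \frac{x^\beta}{|\rho|\log x}\min\Bigl(1,(T_1/|\rho|)^\ell\Bigr).
\]
I will then split $\log x\cdot S$ into a tail $|\rho|>T_1$ and a bulk $|\rho|\leq T_1$, bounding each with the zero-free region of \cref{cor:ZFR} and the log-free zero density estimate of \cref{prop:LFZDE}.

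For the tail, I would decompose dyadically over $|\rho|\asymp T_1 2^k$, bound $x^\beta\ll x\exp(-\Cr{ZFR}\log x/(m\log(C(\pi)T_1 2^{k+1})))$ via \cref{cor:ZFR}, and count zeros in each annulus using the standard bound $\ll mT_1 2^k\log(C(\pi)T_1 2^k)$. Since $\ell=4\Cr{LFZDE}m$, the factor $2^{-k\ell}$ forces rapid convergence of the geometric series and the $k=0$ term dominates, contributing to $\log x\cdot S$ a quantity of order $mx\log(C(\pi)T_1)\exp(-\Cr{ZFR}\log x/(m\log(C(\pi)T_1)))$. By definition of $\eta_\pi$ and the hypothesis $C(\pi)^m\leq x^{1/(8\Cr{LFZDE})}$ (which keeps $\log(C(\pi)T_1)$ small relative to $\log x$), this is $\ll m^2 xe^{-\eta_\pi(x)/2}$.

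For the bulk, I would decompose dyadically in $Y=2^j$ with $Y\leq T_1$, use \cref{cor:ZFR} on each range $|\gamma|\in[Y,2Y]$ to restrict $\beta\leq 1-\Cr{ZFR}/(m\log(C(\pi)2Y))$, and apply partial summation in $\sigma$ with \cref{prop:LFZDE} to get
\[
\sum_{|\gamma|\leq 2Y}x^\beta\ll m^2 x\exp\Bigl(-\tfrac{\Cr{ZFR}\log x}{m\log(C(\pi)2Y)}\Bigr)+mY\log(C(\pi)Y).
\]
Combining with $|\rho|^{-1}\ll \max(1,Y)^{-1}$ and summing dyadically, $\log x\cdot S$ on the bulk reduces to $\ll m^2 x\sum_Y e^{-F(2Y)}$ plus a negligible error from the boundary terms, where $F(t):=\Cr{ZFR}\log x/(m\log(C(\pi)t))+\log t$. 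A short calculation at the minimizer $t^*$ of $F$ (where $\log(C(\pi)t^*)=\sqrt{\Cr{ZFR}\log x/m}$) gives $\log t^*=\eta_\pi(x)/2-\tfrac12\log C(\pi)\leq \eta_\pi(x)/2$, so $e^{-F(Y)}\leq e^{-\eta_\pi(x)}$ for every $Y$, and the dyadic sum is $\ll(\log T_1)e^{-\eta_\pi(x)}\ll e^{-\eta_\pi(x)/2}$.

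The main obstacle will be reconciling the two dyadic decompositions (in $|\rho|$ for the tail and in $\sigma$ for the bulk) while ensuring the boundary terms $mY\log(C(\pi)Y)$ from partial summation aggregate into a lower-order error dominated by the main exponential, and that the factor $\log T_1$ from the dyadic count is killed by $e^{\eta_\pi/2}$. The latter requires $\eta_\pi\gg \log\log x$, a condition secured by the hypothesis $\epsilon<1/4$, which forces $\log x$ to be sufficiently large relative to $m$.
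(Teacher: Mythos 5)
Your decomposition correctly identifies the two main inputs (the zero-free region of \cref{cor:ZFR} and the density estimate of \cref{prop:LFZDE}), but the tail estimate contains a fatal gap. You count zeros in the annulus $|\rho|\asymp T_1 2^k$ with the Riemann--von Mangoldt bound $\ll m T_1 2^k\log(C(\pi)T_1 2^k)$ and then apply \cref{cor:ZFR} to get $x^{\beta}\ll x\exp(-\Cr{ZFR}\log x/(m\log(C(\pi)T_1 2^{k+1})))$. The $k=0$ block then contributes, as you write, $\asymp m\log(C(\pi)T_1)\,x\exp(-\Cr{ZFR}\log x/(m\log(C(\pi)T_1)))$. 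But since $\ell=4\Cr{LFZDE}m$ and $\epsilon=8\ell x^{-1/(8\ell)}$, we have $\log T_1 = \log(2\ell/\epsilon)\asymp\frac{\log x}{32\Cr{LFZDE}m}$, and together with $\log C(\pi)\leq \frac{\log x}{8\Cr{LFZDE}m}$ this forces $m\log(C(\pi)T_1)\asymp\log x$, so the exponent $\Cr{ZFR}\log x/(m\log(C(\pi)T_1))$ is pinned between absolute constants. The $k=0$ block is therefore of size $\asymp x\log x$, vastly larger than the target $m^2 x e^{-\eta_{\pi}(x)/2}$ (which is roughly $m^2 xe^{-\sqrt{\Cr{ZFR}\log x/m}}$): near $|\gamma|\asymp T_1$ the zero-free region gives essentially no saving, and you must invoke \cref{prop:LFZDE} rather than the crude zero count to make the tail close. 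There is a second, subtler issue in the bulk: you need $\log T_1\ll e^{\eta_\pi(x)/2}$ to absorb the dyadic factor, but $\eta_\pi(x)$ can be $\asymp 1$ (take $\log C(\pi)$ near its permitted maximum $\frac{\log x}{8\Cr{LFZDE}m}$, so the infimum in $\eta_\pi$ is attained at $t=3$) while $\log T_1\asymp\frac{\log x}{32\Cr{LFZDE}m}$ is unbounded, so the absorption fails.

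Both difficulties are resolved by the paper's choice of a zero-\emph{dependent} exponent $\alpha=\ell(1-\beta)$ in \cref{lem:WeightChoice}(4), together with the split $|\rho|^{-1}\ll T_j^{-1/2}(3+|\gamma|)^{-1/2}$ on each dyadic annulus $T_{j-1}\leq|\gamma|\leq T_j$. Half the saving, $(3+|\gamma|)^{-1/2}x^{-(1-\beta)/2}$, is paired with the zero-free region to produce $e^{-\eta_\pi(x)/2}$ uniformly; the other half, $T_j^{-1/2}$, makes $\sum_j T_j^{-1/2}$ converge to $O(1)$; and the residual factor $(C(\pi)T_j)^{-2m\Cr{LFZDE}(1-\beta)}$ couples exactly to the exponent in \cref{prop:LFZDE}, so the Stieltjes integral against $dN_\pi(1-\sigma,T_j)$ returns $O(m^2)$ uniformly in $j$. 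Your fixed-$\alpha$ split is not wrong in principle, but it severs the coupling between the weight and the density estimate and loses the half-power of $|\rho|$; repairing it amounts to re-deriving the paper's argument.
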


\begin{proof}
Let $T_0=0$ and $T_j=2^{j-1}$ for all $j\geq 1$.  We consider the sums
	\[
	S_j:=\log x\sum_{\substack{T_{j-1}\leq |\gamma|\leq T_j \\ |\rho|\geq\frac{1}{4},~\rho\neq\beta_1}}|\Phi(-\rho\log x)|.
	\]
	We estimate $|F(-\rho\log x)|$ for $\rho$ in the sum $S_j$ using \cref{lem:WeightChoice} with $\alpha=\ell(1-\beta)$.  Our choices of $\epsilon$ and $\ell$ and our restriction $2\leq C(\pi)^{m}\leq x^{1/(8\Cr{LFZDE})}$ imply that
	\begin{align*}
	|\Phi(-\rho\log x)|\log x\ll \frac{x^{\beta}}{|\rho|}\Big(\frac{2\ell}{\epsilon|\rho|}\Big)^{\ell(1-\beta)}&\ll x T_j^{-\frac{1}{2}}(|\gamma|+3)^{-\frac{1}{2}}x^{-\frac{1-\beta}{2}}(x^{\frac{3}{8}}T_j^{\ell})^{-(1-\beta)}\\
	&\ll x T_j^{-\frac{1}{2}}(|\gamma|+3)^{-\frac{1}{2}}x^{-\frac{1-\beta}{2}} (C(\pi)T)^{-2m\Cr{LFZDE}(1-\beta)}.
	\end{align*}
	By the definition of $\eta_{\pi}(x)$ and \cref{cor:ZFR}, we have the bound
	\[
	(|\gamma|+3)^{-\frac{1}{2}}x^{-\frac{1-\beta}{2}}=e^{-\frac{1}{2}(\log(|\gamma|+3)+(1-\beta)\log x)}\leq e^{-\eta_{\pi}(x)/2}.
	\]
	Consequently, we have that
	\begin{align*}
	S_j \ll \frac{xe^{-\eta_{\pi}(x)/2}}{T_j^{\frac{1}{2}}}\sum_{\substack{T_{j-1}\leq|\gamma|\leq T_j}}(C(\pi)T)^{-2m\Cr{LFZDE}(1-\beta)}\leq \frac{xe^{-\eta_{\pi}(x)/2}}{T_j^{\frac{1}{2}}}\int_0^1(C(\pi)T)^{-2m\Cr{LFZDE}\sigma}dN_m(1-\sigma,T_j).
	\end{align*}
	The Stieltjes integral equals
	\begin{align*}
		(C(\pi)T)^{-2\Cr{LFZDE}m}N_{\pi}(0,T_j)+m\log(C(\pi)T)\int_0^1 (C(\pi)T)^{-2m\Cr{LFZDE}\sigma}N_{\pi}(1-\sigma,T_j)d\sigma,
	\end{align*}
	which we estimate using \cref{prop:LFZDE}.  We conclude that $S_j\ll m^2 x e^{-\eta_{\pi}(x)/2}T_j^{-1/2}$, hence
	\begin{align*}
	\log x\sum_{\substack{ |\rho|\geq \frac{1}{4}\\ \rho\neq\beta_1}}|\Phi(-\rho\log x)|\ll \sum_{j=1}^{\infty}S_j \ll m^2 xe^{-\eta_{\pi}(x)/2}\sum_{j=1}^{\infty}T_j^{-\frac{1}{2}}\ll m^2xe^{-\eta_{\pi}(x)/2},
	\end{align*}
	as desired.
\end{proof}

\begin{lemma}
	\label{lem:eta_bound}
	If $m\geq 1$ and $x\geq 2$, then $e^{-\eta_{\pi}(x)/2}\leq \exp[-\frac{\Cr{ZFR}\log x}{2 m\log C(\pi)}]+\exp[-\frac{\sqrt{\Cr{ZFR}\log x}}{2\sqrt{m}}]$.
\end{lemma}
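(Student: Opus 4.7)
The plan is to reduce the claim to the pointwise bound
\[
\eta_\pi(x) \geq \min\bigl\{\, A/(m\log C(\pi)),\; \sqrt{A/m}\,\bigr\}, \qquad A := \Cr{ZFR}\log x,
\]
because $e^{-\min\{a,b\}/2} = \max\{e^{-a/2}, e^{-b/2}\} \leq e^{-a/2} + e^{-b/2}$, which is exactly the right-hand side of the claim.

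First I would reparametrize the infimum via the substitution $u = \log(C(\pi)t)$: this turns the constraint $t\geq 3$ into $u \geq u_0 := \log(3C(\pi))$ and converts the quantity inside the infimum into $A/(mu) + u - \log C(\pi)$. Since $u\mapsto A/(mu)+u$ is convex with unconstrained minimum at $u^\ast = \sqrt{A/m}$, the infimum over $[u_0,\infty)$ is either the interior value $2\sqrt{A/m} - \log C(\pi)$ (when $u^\ast \geq u_0$) or the boundary value $A/(mu_0) + \log 3$.

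Next I would treat the two cases in turn. In the interior case $u^\ast \geq u_0$, the chain $\log C(\pi) \leq u_0 \leq \sqrt{A/m}$ immediately gives $\eta_\pi(x) \geq \sqrt{A/m}$. In the boundary case $u^\ast < u_0$, I would further split based on whether $A/m \leq \log C(\pi)\cdot u_0$: in the first subcase, the identity $u_0 - \log C(\pi) = \log 3$ yields $\eta_\pi(x) - A/(m\log C(\pi)) = \log 3 \cdot \bigl(1 - A/(m\log C(\pi)\, u_0)\bigr) \geq 0$; in the complementary subcase, multiplying $\eta_\pi(x) - \sqrt{A/m}$ by $u_0$ and using $A/m > (u_0-\log 3)u_0$ shows the result exceeds $u_0(u_0 - \sqrt{A/m})$, which is positive because the boundary case is precisely $\sqrt{A/m} < u_0$.

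The main obstacle will be verifying this second subcase of the boundary case, which is the only place where a naive bound fails: when the $\log 3$ slack between $\log C(\pi)$ and $\log(3C(\pi))$ is too small to absorb $A/(m\log C(\pi)) - A/(mu_0)$, one must instead argue that $A/m$ is then large enough for $\sqrt{A/m}$ to take over. Everything else reduces to routine convex optimization and elementary manipulations.
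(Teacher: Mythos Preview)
Your argument is correct and is precisely the ``straightforward optimization problem'' the paper alludes to in its one-line proof: substitute $u=\log(C(\pi)t)$, minimize the convex function $A/(mu)+u$ on $[u_0,\infty)$, and compare the interior and boundary values against $A/(m\log C(\pi))$ and $\sqrt{A/m}$ respectively. Your case split and the handling of the delicate subcase~2b are clean and valid (using $\log C(\pi)=u_0-\log 3$ and $u^\ast<u_0$), so there is nothing to add.
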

\begin{proof}
	This is a straightforward optimization problem.
\end{proof}

\begin{proof}[Proof of \cref{prop:PNT}]
Collect the estimates in \cref{lem:smoothing}, \eqref{eqn:001}, \eqref{eqn:002}, \cref{lem:sum_zeros}, and \cref{lem:eta_bound} and apply the prescribed choices for $\ell$ and $\epsilon$ in our range of $x$.
\end{proof}

	\section{Proof of \cref{prop:PNT_Sym,prop:PNT_Sym_2}}
\label{sec:Sym}

Let $f\in S_k^{\mathrm{new}}(\Gamma_0(q))$ be a newform as in the statement of \cref{thm:main_theorem}.  For each prime $p$, let $\theta_p\in[0,\pi]$ be the unique angle such that $a_f(p) = 2\cos\theta_p$.  The modular $L$-function $L(s,f)$ associated to $f$ has the Euler product representation
\[
L(s,f)=\sum_{n=1}^{\infty}\frac{a_{f}(n)}{n^s}=\prod_p\Big(1-\frac{a_f(p)}{p^{s}}+\frac{\chi_0(p)}{p^{2s}}\Big)^{-1},\qquad\re(s)>1,
\]
where $\chi_0$ is the trivial Dirichlet character modulo $q$.  We rewrite the Euler product as
\begin{equation}
\label{eqn:Euler_product}
L(s,f)=\prod_{p|q}\Big(1-\frac{(-\lambda_p p^{-\frac{1}{2}})}{p^{s}}\Big)\prod_{p\nmid q}\prod_{j=0}^1 \Big(1-\frac{e^{i(2j-1)\theta_p}}{p^{s}}\Big)^{-1},\qquad\re(s)>1,
\end{equation}
where $\lambda_p\in\{-1,1\}$ is the eigenvalue of the Atkin--Lehner operator $|_k W(Q_p)$.

\subsection{Standard $L$-functions}

For each $m\geq 1$, we define the Euler product
\begin{equation}
\label{eqn:Euler_product_Lsym}
L(s,\Sym)=\sum_{n=1}^{\infty}\frac{a_{\Sym}(n)}{n^s}=\prod_{p|q}\prod_{j=0}^{m}\Big(1-\frac{\alpha_{j,\mathrm{Sym}^m f}(p)}{p^{s}}\Big)^{-1}\prod_{p\nmid q}\prod_{j=0}^{m} \Big(1-\frac{e^{i(2j-m)\theta_p}}{p^{s}}\Big)^{-1}
\end{equation}
for $\re(s)>1$.  The values $\alpha_{j,\mathrm{Sym}^m f}(p)$ can be determined using \cite[Appendix]{ST}, but an explicit description with uniformity in $f$ can be unwieldy when $q$ is not squarefree.  These explicit descriptions are not germane to our proofs.  We do note that when $f$ corresponds with a non-CM elliptic curve via modularity, a completely explicit and wieldy description of $\alpha_{j,\mathrm{Sym}^m f}(p)$ at $p|q$ can be found in \cite[Appendix]{Chantal}.

We also define
\begin{equation}
\label{eqn:gamma_factor}
L(s,(\Sym)_{\infty})=\begin{cases}
q_{\mathrm{Sym}^m f}\prod_{j=1}^{(m+1) / 2} \Gamma_{\mathbb{C}}(s+(j-\tfrac{1}{2})(k-1))&\mbox{if $m$ is odd,}\\
q_{\mathrm{Sym}^m f}\Gamma_{\mathbb{R}}(s+r) \prod_{j=1}^{m / 2} \Gamma_{\mathbb{C}}(s+j(k-1))&\mbox{if $m$ is even}
\end{cases}
\end{equation}
for a suitable integer $q_{\mathrm{Sym}^m f}$, where $\Gamma_{\mathbb{C}}(s)=\Gamma_{\R}(s)\Gamma_{\R}(s+1)$,  $r=0$ if $m\equiv 0\pmod{4}$, and $r=1$ if $m\equiv 2\pmod{4}$.  Note that $L(\mathrm{Sym}^1 f)=L(s,f)$ and $L(\mathrm{Sym}^0 f)=\zeta(s)$.  One easily checks via \eqref{eqn:Euler_product_Lsym} that
\begin{equation}
\label{eqn:coeffs_1}
a_{\Sym}(p) = U_m(\cos\theta_p),\qquad p\nmid q.
\end{equation}

\begin{theorem}
\label{thm:automorphy}
Let $f\in S_k^{\mathrm{new}}(\Gamma_0(q))$ be as in  \cref{thm:main_theorem}, and let $\pi_f\in\mathfrak{F}_2$ correspond with $f$.  If $m\geq 1$, then $L(s,\Sym)$ is the standard $L$-function associated to the representation $\mathrm{Sym}^m\pi_f\in\mathfrak{F}_{m+1}$, with $L(s,(\mathrm{Sym}^m\pi_f)_{\infty})$ given by \eqref{eqn:gamma_factor}.  We also have $\log q_{\mathrm{Sym}^m f}\ll m\log q$.
\end{theorem}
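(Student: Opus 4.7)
The plan is to reduce the theorem to a combination of the deep automorphy input of Newton and Thorne \cite{NT, NT2} and routine local computations. Let $\pi_f \in \mathfrak{F}_2$ be the cuspidal automorphic representation of $\GL_2(\A)$ corresponding to $f$, so that $q_{\pi_f} = q$ and $\pi_{f,\infty}$ is the discrete series of lowest weight $k$. The key input from \cite{NT, NT2} is that $\mathrm{Sym}^m \pi_f$ lies in $\mathfrak{F}_{m+1}$ for every $m \geq 1$; consequently, its standard $L$-function slots directly into the framework of \cref{sec:properties}. What remains is to match local factors and to bound the conductor.

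First, I would verify the unramified Euler factors. For $p \nmid q$, $\pi_{f,p}$ is the unramified principal series with Satake parameters $\{e^{i\theta_p}, e^{-i\theta_p}\}$, so under the local Langlands correspondence its $m$-th symmetric power has Satake parameters $\{e^{i(2j-m)\theta_p}\}_{j=0}^{m}$; these match \eqref{eqn:Euler_product_Lsym} exactly. Next, I would verify the archimedean factor. The Langlands parameter of $\pi_{f,\infty}$ is $\mathrm{Ind}_{W_{\mathbb{C}}}^{W_{\mathbb{R}}}(z \mapsto (z/|z|)^{k-1})$; taking its $m$-th symmetric power and decomposing via standard branching (pair the weights $2i-m$ with $-(2i-m) = m - 2i$) yields $\lfloor (m+1)/2 \rfloor$ two-dimensional induced pieces, each contributing a $\Gamma_{\mathbb{C}}$-factor with shift $(j-\tfrac12)(k-1)$ when $m$ is odd or $j(k-1)$ when $m$ is even, and, when $m$ is even, one additional character of $W_{\mathbb{R}}$ extending the trivial character of $W_{\mathbb{C}}$, contributing the lone $\Gamma_{\mathbb{R}}$-factor, with $r$ tracking the parity of $m/2$. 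Reading off the archimedean $L$-factors through the usual dictionary reproduces both cases of \eqref{eqn:gamma_factor}.

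The main obstacle will be the conductor bound at ramified primes. For each $p \mid q$, the local Langlands correspondence associates to $\pi_{f,p}$ a two-dimensional Weil--Deligne representation $\rho_p$ whose Artin conductor exponent equals $a_p(\pi_f) = \mathrm{ord}_p(q)$. I would proceed by a case analysis on the local type of $\rho_p$: in the principal series case $\rho_p \cong \chi_1 \oplus \chi_2$, the bound $a_p(\mathrm{Sym}^m \rho_p) \leq (m+1)(a(\chi_1) + a(\chi_2))$ follows from additivity of conductors of characters; in the Steinberg case $\mathrm{Sym}^m \rho_p$ is (up to twist) the analogous special representation of $\GL_{m+1}$, whose conductor exponent grows linearly in $m$; the supercuspidal case is the most delicate, but combining the trivial bound $\dim \mathrm{Sym}^m \rho_p = m+1$ with the fact that the upper-numbering ramification breaks of $\mathrm{Sym}^m \rho_p$ do not exceed those of $\rho_p$ yields $a_p(\mathrm{Sym}^m \rho_p) \leq (m+1)(1 + a_p(\rho_p))$. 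Summing over $p \mid q$ and using $\sum_{p \mid q} \log p \leq \log q$ then gives $\log q_{\mathrm{Sym}^m f} \ll m \log q$, as desired. Since only the order of magnitude in $m$ is needed, any such linear-in-$m$ bound, available from Bushnell--Henniart-type estimates, suffices without further optimization.
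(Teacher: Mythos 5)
Your proof is correct and, at the level of the key input, runs the same course as the paper's: both hinge entirely on Newton--Thorne for the cuspidality of $\mathrm{Sym}^m\pi_f$, after which everything else is local bookkeeping. Where you diverge is in how that bookkeeping is handled. The paper's proof is purely citation-driven: it quotes Moreno--Shahidi and Cogdell--Michel for the archimedean factor \eqref{eqn:gamma_factor} (computed there under a cuspidality hypothesis, now discharged), Cogdell--Michel for $\log q_{\mathrm{Sym}^m f}=m\log q$ when $q$ is squarefree, and Rouse (following a suggestion of Serre) for the general bound $\log q_{\mathrm{Sym}^m f}\ll m\log q$. You instead rederive both local ingredients from scratch: the archimedean factor by decomposing $\mathrm{Sym}^m\mathrm{Ind}_{W_\mathbb{C}}^{W_\mathbb{R}}(\chi)$ into weight pairs plus a residual $W_\mathbb{R}$-character tracking the parity of $m/2$ (this matches \eqref{eqn:gamma_factor} case by case), and the conductor bound by a case analysis on the inertial type of $\rho_p$ at $p\mid q$ together with the fact that the upper-numbering ramification breaks of $\mathrm{Sym}^m\rho_p$ are controlled by those of $\rho_p$, giving $a_p(\mathrm{Sym}^m\rho_p)\le (m+1)(1+a_p(\rho_p))$. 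This last piece is in substance what Rouse proves in the cited work, so your argument is a self-contained replacement for that reference. One small imprecision: in the principal-series case you invoke ``additivity of conductors of characters'' to bound $a(\chi_1^j\chi_2^{m-j})$ by $a(\chi_1)+a(\chi_2)$; the correct fact is $a(\chi_1^j\chi_2^{m-j})\le\max\{a(\chi_1),a(\chi_2)\}$ (a product of characters trivial on a higher unit group $U^{(n)}$ is trivial on $U^{(n)}$), which is stronger, so your conclusion is unaffected. Neither approach buys more precision than the other for this theorem, but yours avoids outsourcing the ramified conductor bound to the literature at the cost of a longer write-up.
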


\begin{proof}
Let $\pi_f$ be the cuspidal automorphic representation of $\GL_2(\A)$ with unitary central character which corresponds with $f$.  Newton and Thorne (\cite[Thm B]{NT} and \cite[Thm A]{NT2}) recently proved that if $m\geq 1$, then the $m$-th symmetric power lift $\mathrm{Sym}^m\pi_f$ is a self-dual cuspidal automorphic representation of $\GL_{m+1}(\mathbb{A})$ with trivial central character whose standard $L$-function is given by \eqref{eqn:Euler_product}.  Moreno and Shahidi \cite{MS2} and Cogdell and Michel \cite[Section 3]{CM} computed $L(s,(\mathrm{Sym}^m \pi_f)_{\infty})$ under the assumption of cuspidality, which we now have.  When $q$ is squarefree, we have $\log q_{\mathrm{Sym}^m f}=m\log q$ \cite[Section 3]{CM}; otherwise, Rouse, following a suggestion of Serre, proved that $\log q_{\Sym}\ll m\log q$ \cite[Section 5]{Rouse}.
\end{proof}

For $m\geq 1$, a straightforward calculation using \eqref{eqn:gamma_factor} and Stirling's formula yields
\begin{equation}
\label{eqn:AC_bound_log}
\log C(\Sym)\ll m\log(kqm),\qquad C(\Sym):=C(\mathrm{Sym}^m\pi_f).
\end{equation}

\subsection{Rankin--Selberg $L$-functions}

Given $f$ as in \cref{thm:automorphy}, let $\pi_f$ be the cuspidal automorphic representation of $\GL_2(\A)$ corresponding to $f$.  Given an integer $m\geq 0$, let $\mathrm{Sym}^m \pi_f$ be the $m$-th symmetric power lift, which is shown in \cref{thm:automorphy} to be a cuspidal automorphic representation of $\GL_{m+1}(\A)$.  (If $m=0$, then $\mathrm{Sym}^m\pi_f=\mathbbm{1}$.)

For $i=1,2$, let $f_i\in S_{k_i}^{\mathrm{new}}(\Gamma_0(q_i))$ be a newform as in \cref{thm:automorphy}, and let $\{\theta_p^{(j)}\}$ be the sequence of Sato--Tate angles for $f_i$.  Suppose that $\pi_{f_1}\neq \pi_{f_2}$.  For integers $m_i\geq 0$, we consider the tensor product $\mathrm{Sym}^{m_1}\pi_{f_1}\otimes\mathrm{Sym}^{m_2}\pi_{f_2}$, whose Rankin--Selberg $L$-function is
\[
L(s,\mathrm{Sym}^{m_1}f_1\times\mathrm{Sym}^{m_2}f_2)\doteq\prod_{p\nmid q_1 q_2}\prod_{j_1=0}^{m_1}\prod_{j_2=0}^{m_2}\Big(1-\frac{e^{i(2j_1-m_1)\theta_p^{(1)}}e^{i(2j_2-m_2)\theta_p^{(2)}}}{p^s}\Big)^{-1}
\]
in view of \cref{thm:automorphy}.  The $\doteq$ suppresses the (more complicated) Euler factors at primes $p|q_1 q_2$ which have an unwieldy (and, for our purposes, unenlightening) explicit description via \cite[Appendix]{ST}.  Instead describing the Euler factors at primes $p|q_{1}q_{2}$ and the gamma factors, we observe that the bound \eqref{eqn:LRS2} applied to the primes $p|q_{1}q_{2}$, while probably very inefficient, is strong enough for us to prove \cref{thm:joint}, and we can estimate $C(\mathrm{Sym}^{m_1}f_1\times\mathrm{Sym}^{m_2}f_2)$ using \eqref{eqn:BH} and \eqref{eqn:AC_bound_log}.  A standard though tedious calculation shows that the Langlands parameters of $L(s,\mathrm{Sym}^{m_1}f_1\times\mathrm{Sym}^{m_2}f_2)$ satisfy the hypotheses of \cref{prop:PNT}; we omit this calculation.  It is straightforward to check that
\begin{equation}
	\label{eqn:coeffs_2}
	a_{\mathrm{Sym}^{m_1}f_1\times\mathrm{Sym}^{m_2}f_2}(p) = U_{m_1}(\cos\theta_p^{(1)})U_{m_2}(\cos\theta_p^{(2)}),\qquad p\nmid q_1 q_2.
\end{equation}

\begin{lemma}
	\label{lem:Rajan}
	If $m_1 m_2\neq 0$ and $f_1\not\sim f_2$ are as in \cref{thm:automorphy}, then $L(s,\mathrm{Sym}^{m_1}f_1\times\mathrm{Sym}^{m_2}f_2)$ extends to an entire function with no pole at $s=1$.
\end{lemma}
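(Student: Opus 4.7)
The plan is to invoke the standard meromorphic continuation theory of Rankin--Selberg $L$-functions for cuspidal automorphic representations of $\GL_n(\A)$, combined with the self-duality of symmetric power lifts of holomorphic newforms of trivial nebentypus.  By \cref{thm:automorphy} (i.e.\ Newton--Thorne), $\mathrm{Sym}^{m_1}\pi_{f_1}\in\mathfrak{F}_{m_1+1}$ and $\mathrm{Sym}^{m_2}\pi_{f_2}\in\mathfrak{F}_{m_2+1}$ are both cuspidal, so the Jacquet--Shalika theory summarized in \cref{sec:properties} tells us that $L(s,\mathrm{Sym}^{m_1}f_1\times\mathrm{Sym}^{m_2}f_2)$ has meromorphic continuation to $\mathbb{C}$, is entire away from $s=0,1$, and has at most a simple pole at $s=1$ which occurs if and only if $\mathrm{Sym}^{m_2}\pi_{f_2}\cong\widetilde{\mathrm{Sym}^{m_1}\pi_{f_1}}$.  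Because $f_1$ and $f_2$ have trivial nebentypus, the representations $\pi_{f_1}$ and $\pi_{f_2}$ are self-dual, hence so are all their symmetric powers.  Thus it suffices to rule out the isomorphism
\[
\mathrm{Sym}^{m_1}\pi_{f_1}\cong \mathrm{Sym}^{m_2}\pi_{f_2}.
\]

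First, if $m_1\neq m_2$, then the two representations live on different general linear groups $\GL_{m_1+1}(\A)$ and $\GL_{m_2+1}(\A)$, so no such isomorphism is possible, and the conclusion is immediate.

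The main (and only substantive) case is $m_1=m_2=m\geq 1$.  Here I would appeal to Ramakrishnan's/Rajan's descent theorem for symmetric powers of $\GL_2$-automorphic representations: if $\pi_{f_1}$ and $\pi_{f_2}$ are non-dihedral cuspidal automorphic representations of $\GL_2(\A)$ and $\mathrm{Sym}^m\pi_{f_1}\cong \mathrm{Sym}^m\pi_{f_2}$, then $\pi_{f_1}\cong\pi_{f_2}\otimes\chi$ for some idele class character $\chi$ with $\chi^m$ trivial.  Since our $f_1,f_2$ are non-CM (in particular non-dihedral) and the assumption $f_1\not\sim f_2$ precisely excludes the existence of any such twisting character $\chi$, the isomorphism $\mathrm{Sym}^m\pi_{f_1}\cong\mathrm{Sym}^m\pi_{f_2}$ cannot hold.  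Consequently $L(s,\mathrm{Sym}^{m_1}f_1\times\mathrm{Sym}^{m_2}f_2)$ has no pole at $s=1$, and by the standard functional equation (which relates $s$ to $1-s$ via the contragredients, which are again symmetric powers by self-duality) the absence of a pole at $s=1$ forces the absence of a pole at $s=0$, giving entirety on all of $\mathbb{C}$.

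The only step that is not purely bookkeeping is the $m_1=m_2$ case: the needed descent result is classical in low-rank cases (Ramakrishnan for $m=2$, etc.), and in general follows from the cuspidality of all $\mathrm{Sym}^m\pi_f$ combined with strong multiplicity one—so with \cref{thm:automorphy} in hand, there is no real obstacle.  I would simply cite Rajan's paper on recovering cuspidal automorphic representations from symmetric powers (which is presumably the source of the lemma's name) for the clean statement.
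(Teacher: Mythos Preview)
Your proposal is correct and follows essentially the same route as the paper's proof: both reduce to the cuspidality of $\mathrm{Sym}^{m_i}\pi_{f_i}$ via Newton--Thorne, invoke Jacquet--Shalika so that the only possible pole occurs when $\mathrm{Sym}^{m_1}\pi_{f_1}\cong\mathrm{Sym}^{m_2}\pi_{f_2}$, and then cite outside work to rule this out.  The paper simply points to Harris \cite[Thm 5.3]{Harris} (written under his ``Expected Theorems'', now supplied by Newton--Thorne), whereas you unpack the structure and point to Rajan for the descent step; Harris's argument ultimately rests on the same ingredient, so the two proofs are morally identical.

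One small comment: in your last paragraph you say the $m_1=m_2$ descent ``follows from the cuspidality of all $\mathrm{Sym}^m\pi_f$ combined with strong multiplicity one.''  That undersells what is needed.  Strong multiplicity one lets you pass between an isomorphism of cuspidal representations and a matching of Satake parameters at almost all places, but it does not by itself let you descend from $\mathrm{Sym}^m\pi_{f_1}\cong\mathrm{Sym}^m\pi_{f_2}$ back to a twist relation on $\GL_2$.  That descent is genuine representation-theoretic content (on the Galois side, the statement that $\mathrm{Sym}^m\rho_1\cong\mathrm{Sym}^m\rho_2$ for irreducible $2$-dimensional $\rho_i$ forces $\rho_1\cong\rho_2\otimes\chi$ with $\chi^m=1$), and it is precisely what Rajan supplies.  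Since you do cite Rajan for the clean statement, this is not a gap---just be careful not to suggest that strong multiplicity one alone does the work.
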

\begin{proof}
	This is proved by Harris \cite[Theorem 5.3]{Harris} when $f_1$ and $f_2$ are associated to non-CM elliptic curves.  The proof is identical for other pairs $f_1\not\sim f_2$.  The assumption of Harris's ``Expected Theorems'' is replaced by the automorphy of symmetric powers proved by Newton and Thorne \cite{NT,NT2}.
\end{proof}

\begin{lemma}
	\label{lem:Siegel_1}
	There exists a constant $\Cl[abcon]{ZFR_1}>0$ such that if $m\geq 1$, then $L(s,\Sym)\neq 0$ for
	\[
	\re(s)\geq 1-\frac{\Cr{ZFR_1}}{m^2\log(kqm(3+|\im(s)|))}.
	\]
\end{lemma}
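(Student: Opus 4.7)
The plan is to apply \cref{cor:ZFR} Part (1) to the self-dual cuspidal representation $\mathrm{Sym}^m\pi_f\in\mathfrak{F}_{m+1}$ (provided by \cref{thm:automorphy}) and then eliminate the potential exceptional real zero by invoking \cref{prop:GHL} on a suitable auxiliary isobaric sum. Combining \cref{cor:ZFR} Part (1) with the conductor bound $\log C(\mathrm{Sym}^m f)\ll m\log(kqm)$ from \eqref{eqn:AC_bound_log} yields
\[
\log\bigl(C(\mathrm{Sym}^m f)(3+|\im(s)|)\bigr)\ll m\log(kqm(3+|\im(s)|)),
\]
so the denominator $m\log(C(\mathrm{Sym}^m f)(3+|\im(s)|))$ in \cref{cor:ZFR} is $\ll m^2\log(kqm(3+|\im(s)|))$. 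This gives the desired zero-free region save for at most one real exceptional zero $\beta_1$; complex zeros are already handled, so the entire content of the lemma is the removal of $\beta_1$.

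To rule out $\beta_1$, I would apply \cref{prop:GHL} to the self-dual isobaric sum
\[
\Pi = \mathrm{Sym}^m \pi_f \boxplus \mathrm{Sym}^2 \pi_f \boxplus \mathbbm{1}.
\]
Expanding $L(s,\Pi\times\tilde\Pi) = \prod_{i,j} L(s,\pi_i\times\pi_j)$ via Clebsch--Gordan $L(s,\mathrm{Sym}^a f\times\mathrm{Sym}^b f) = \prod_{j=0}^{\min(a,b)} L(s,\mathrm{Sym}^{a+b-2j}f)$, one reads off
\[
L(s,\Pi\times\tilde\Pi) \;=\; L(s,\mathrm{Sym}^m f\times\mathrm{Sym}^m f)\, L(s,\mathrm{Sym}^2 f\times\mathrm{Sym}^2 f)\, \zeta(s)\cdot L(s,\mathrm{Sym}^m f\times\mathrm{Sym}^2 f)^2\, L(s,\mathrm{Sym}^m f)^2\, L(s,\mathrm{Sym}^2 f)^2,
\]
so $r_{\Pi\times\tilde\Pi}=3$ (one pole each from the two self-Rankin--Selberg factors and from $\zeta$). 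Because $\mathrm{Sym}^m$ appears in the Clebsch--Gordan expansion of $\mathrm{Sym}^m\otimes\mathrm{Sym}^2$ for every $m\ge 1$ (the $j=1$ summand), a real zero $\beta_1$ of $L(s,\mathrm{Sym}^m f)$ contributes multiplicity at least $2$ to $L(s,\mathrm{Sym}^m f\times\mathrm{Sym}^2 f)^2$ and multiplicity $2$ to $L(s,\mathrm{Sym}^m f)^2$, a total of at least $4 > r_{\Pi\times\tilde\Pi}$. This violates \cref{prop:GHL}. (For $m$ even the factor $L(s,\mathrm{Sym}^m f\times\mathrm{Sym}^m f)$ provides an additional unit of multiplicity, which only strengthens the contradiction; the case $m=2$ requires a separate bookkeeping due to the repeated constituent, but yields $8>5$ analogously.)

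To close the argument one must check that the lemma's region lies inside the interval in \cref{prop:GHL}. Applying \eqref{eqn:BH} to each of the nine pairs $(\pi_i,\pi_j)$ and invoking \eqref{eqn:AC_bound_log} gives $\log C(\Pi\times\tilde\Pi)\ll m^2\log(kqm)$, since each constituent of $\Pi$ has dimension $\ll m$ and analytic conductor with logarithm $\ll m\log(kqm)$. Hence \cref{prop:GHL} furnishes a zero-free interval of the form $\re(s)\geq 1-c/(m^2\log(kqm))$, which contains the lemma's region for real $s$ once $\Cr{ZFR_1}$ is taken sufficiently small. The main obstacle is precisely this conductor bookkeeping: one must track degree dependencies in \eqref{eqn:BH} carefully enough not to dilute the $m^2$ in the denominator, but the computation is clean because the dominant contribution comes from $C(\mathrm{Sym}^m f\times\mathrm{Sym}^m f)$, which is controlled by \eqref{eqn:AC_bound_log} in the correct form.
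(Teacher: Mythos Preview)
Your proposal is correct and follows essentially the same route as the paper: apply \cref{cor:ZFR} with \eqref{eqn:AC_bound_log} to dispose of complex zeros, then remove the potential real exceptional zero via \cref{prop:GHL} applied to the self-dual isobaric sum $\Pi=\mathbbm{1}\boxplus\mathrm{Sym}^2\pi_f\boxplus\mathrm{Sym}^m\pi_f$, with the same conductor estimate $\log C(\Pi\times\tilde\Pi)\ll m^2\log(kqm)$. The only cosmetic difference is that the paper fully expands $L(s,\Pi\times\tilde\Pi)$ via Clebsch--Gordan to exhibit the factor $L(s,\mathrm{Sym}^m f)^4$ directly, whereas you stop at the nine pairwise Rankin--Selberg factors and count the multiplicity of $L(s,\mathrm{Sym}^m f)$ there; both readings yield multiplicity $\geq 4>3=r_{\Pi\times\tilde\Pi}$ (and your separate handling of $m=2$ with $8>5$ matches what the fully expanded form would give).
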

\begin{proof}
When $\im(s)\neq 0$, this follows from \cref{cor:ZFR} and \eqref{eqn:AC_bound_log} (once $\Cr{ZFR_1}$ is made suitably small).  It remains to handle the case where $\im(s)=0$.  Suppose to the contrary that a real zero in this region exists.  Consider the isobaric automorphic representation $\Pi_m=\mathbbm{1}\boxplus\mathrm{Sym}^2\pi_f\boxplus\mathrm{Sym}^m \pi_f$.  Using the identities $\mathrm{Sym}^{m} \pi_f\otimes\mathrm{Sym}^m \pi_f=\mathbbm{1}\boxplus(\boxplus_{j=1}^m\mathrm{Sym}^{2j}\pi_f)$, $\pi_f\otimes\mathrm{Sym}^2\pi_f=\pi_f\boxplus \mathrm{Sym}^3 \pi_f$, and $\mathrm{Sym}^{m} \pi_f\otimes\mathrm{Sym}^2 \pi_f=\boxplus_{j=0}^2\mathrm{Sym}^{m+2-2j}\pi_f$ for $m\geq 2$, we find for $m\geq 1$ that
	\begin{multline*}
L(s,\Pi_m\times\tilde{\Pi}_m)=\zeta(s)^3 L(s,\mathrm{Sym}^{m} f)^4 L(s,\mathrm{Sym}^2 f)^3 L(s,\mathrm{Sym}^4 f) L(s,\mathrm{Sym}^{m+2} f)^2\\
	\times L(s,\mathrm{Sym}^{m-2} f)^2\prod_{j=1}^m L(s,\mathrm{Sym}^{2j} f)
	\end{multline*}
	(with $L(s,\mathrm{Sym}^{m-2}f)^2$ omitted when $m=1$).  The bound $\log C(\Pi_m\times\tilde{\Pi}_m)\ll m^2\log(kqm)$ follows from \eqref{eqn:BH} and \eqref{eqn:AC_bound_log}.  Note that $L(s,\Pi_m\times\tilde{\Pi}_m)$ always has a pole of order exactly 3, but a proposed zero of $L(s,\Sym)$ ensures that $L(s,\Pi_m\times\tilde{\Pi}_m)$ has a real zero of order at least 4 in the region \eqref{eqn:claimed_GHL}.  This contradicts \cref{prop:GHL}, hence no such zero can exist (once $\Cr{ZFR_1}$ is made suitably small). 
\end{proof}

\begin{lemma}
	\label{lem:Siegel_2}
	Let $1\leq m_1,m_2\leq M$. There exists a constant $\Cl[abcon]{ZFR_2}>0$ such that  The Rankin--Selberg $L$-function $L(s,\pi\times\pi')\neq 0$ for
	\[
	\re(s)\geq 1-\frac{\Cr{ZFR_2}}{M^2\log(k_1 q_1 k_2 q_2 M (3+|\im(s)|))}
	\]
	apart from at most one zero $\beta_{m_1,m_2}$.  If $\beta_{m_1,m_2}$ exists, then it is real and simple, and there exist constants $0<\Cl[abcon]{Siegel}<1 $ and $\Cl[abcon]{Siegel2}>0$ such that $\beta_{m_1,m_2}\leq 1-\Cr{Siegel}(k_1 q_1 k_2 q_2 M)^{-\Cr{Siegel2}M^2}$.
\end{lemma}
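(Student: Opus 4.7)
The plan is to deduce both assertions directly from \cref{cor:ZFR}~(Part 2) and \cref{lem:Siegel}, after verifying their hypotheses for $\pi = \mathrm{Sym}^{m_1}\pi_{f_1}$ and $\pi' = \mathrm{Sym}^{m_2}\pi_{f_2}$. By \cref{thm:automorphy}, both representations are self-dual and cuspidal, of degrees $m_1 + 1$ and $m_2 + 1$, each at most $M+1$.

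For the zero-free region I would invoke \cref{cor:ZFR}~(Part 2) applied to this $\pi$ and $\pi'$. Using the conductor estimate \eqref{eqn:AC_bound_log}, namely $\log C(\mathrm{Sym}^{m_i}\pi_{f_i}) \ll m_i \log(k_i q_i m_i) \ll M\log(k_1 q_1 k_2 q_2 M)$, the denominator appearing in \eqref{eqn:claimed_ZFR_2} is
\[
(m_1+m_2+2)\log\bigl(C(\pi)C(\pi')(3+|\im(s)|)^{\min\{m_1+1,m_2+1\}}\bigr) \ll M^2\log\bigl(k_1 q_1 k_2 q_2 M(3+|\im(s)|)\bigr),
\]
which yields the claimed region after shrinking $\Cr{ZFR}$ appropriately. \cref{cor:ZFR}~(Part 2) then supplies at most one exceptional zero, which is automatically real and simple; I label it $\beta_{m_1,m_2}$.

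For the upper bound on $\beta_{m_1,m_2}$ I would apply \cref{lem:Siegel}, whose hypothesis $\pi\neq \pi'$ must be deduced from $f_1\not\sim f_2$. If $m_1 \neq m_2$ this is immediate from the degrees. If $m_1=m_2=m$, then $\mathrm{Sym}^m\pi_{f_1}=\mathrm{Sym}^m\pi_{f_2}$ would force the Satake sets $\{e^{i(m-2j)\theta_p^{(1)}}\}_j$ and $\{e^{i(m-2j)\theta_p^{(2)}}\}_j$ to coincide at each unramified prime; comparing the extremal entries shows that $e^{\pm i\theta_p^{(1)}}$ and $e^{\pm i\theta_p^{(2)}}$ differ by an $(m+1)$-th root of unity $\zeta_p$, and since both $f_i$ have trivial nebentypus, strong multiplicity one upgrades the map $p\mapsto \zeta_p$ to a Dirichlet character that twists $\pi_{f_1}$ into $\pi_{f_2}$, contradicting twist-inequivalence. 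Given $\pi\neq\pi'$, \cref{lem:Siegel} combined with $\log(C(\pi)C(\pi')) \ll M\log(k_1 q_1 k_2 q_2 M)$ and $(m_1+1)+(m_2+1)\leq 2M+2$ yields the stated bound $\beta_{m_1,m_2}\leq 1-\Cr{Siegel}(k_1 q_1 k_2 q_2 M)^{-\Cr{Siegel2}M^2}$.

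The only non-routine step is the injectivity claim $\mathrm{Sym}^m\pi_{f_1}\neq\mathrm{Sym}^m\pi_{f_2}$ when $f_1\not\sim f_2$, since it requires descending from equality of $m$-th symmetric powers back to an honest character twist while tracking a root-of-unity ambiguity across primes. Everything else is a bookkeeping exercise in the conductor bound \eqref{eqn:AC_bound_log} and the $(m+m')$-dependence already packaged in \cref{cor:ZFR,lem:Siegel}.
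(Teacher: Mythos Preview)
Your approach matches the paper's: both deduce the lemma from \cref{cor:ZFR}(2), \cref{lem:Siegel}, and the conductor bound \eqref{eqn:AC_bound_log}, and your bookkeeping of the $M$-dependence is correct.

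The only point worth commenting on is your verification of the hypothesis $\pi\neq\pi'$ needed for \cref{lem:Siegel}, which the paper simply takes for granted. Your ad hoc injectivity argument (comparing ``extremal entries'' of the Satake multisets and producing a root-of-unity character) is sketchy as written: the Satake parameters $e^{i(m-2j)\theta_p}$ lie on the unit circle with no canonical ordering, so ``extremal'' is not well-defined, and it is not clear that the local roots of unity $\zeta_p$ patch into a global Dirichlet character. A much cleaner route is already available in the paper: by \cref{lem:Rajan}, $L(s,\mathrm{Sym}^{m_1}f_1\times\mathrm{Sym}^{m_2}f_2)$ has no pole at $s=1$ when $f_1\not\sim f_2$; since a pole at $s=1$ occurs precisely when $\pi'=\tilde{\pi}=\pi$ (both being self-dual), this immediately gives $\pi\neq\pi'$. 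You could replace your entire injectivity paragraph with this one-line observation.
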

\begin{proof}
	This follows from \cref{cor:ZFR}, \cref{lem:Siegel}, and \eqref{eqn:AC_bound_log}.
\end{proof}

\subsection{Proofs of \cref{prop:PNT_Sym,prop:PNT_Sym_2}}

\begin{proof}[Proof of \cref{prop:PNT_Sym}]
	This follows from \cref{prop:PNT}(1), \eqref{eqn:coeffs_1}, \cref{thm:automorphy}, \eqref{eqn:AC_bound_log}, and \cref{lem:Siegel_1}.  The conditions in \cref{prop:PNT} are satisfied for $m$ in the claimed range.
\end{proof}

\begin{proof}[Proof of \cref{prop:PNT_Sym_2}]
	This follows from \cref{prop:PNT}(2), \cref{thm:automorphy}, \eqref{eqn:AC_bound_log}, \cref{lem:Rajan}, \eqref{eqn:coeffs_2}, and \cref{lem:Siegel_2}.  The two conditions in \cref{prop:PNT} are satisfied for $m_1$ and $m_2$ in the claimed range.
\end{proof}

\bibliographystyle{abbrv}
\bibliography{GeneralizedLinnik}

\end{document}